\DeclareFontFamily{U}{wncy}{}
\DeclareFontShape{U}{wncy}{m}{n}{<->wncyr10}{}
\DeclareSymbolFont{mcy}{U}{wncy}{m}{n}
\DeclareMathSymbol{\Sha}{\mathord}{mcy}{"58}
\newcommand{\tr}{\operatorname{tr}}
\newcommand{\interval}[4]{
  \ifthenelse{ \equal{#1}{o} } {\mathopen{]}} {\mathopen{[}}
  #2, #3
  \ifthenelse{ \equal{#4}{o} } {\mathclose{[}} {\mathclose{]}}
}
\renewcommand{\H}{\mathbf{H}}
\newcommand{\Z}{\mathbf{Z}}
\newcommand{\N}{\mathbf{N}}
\newcommand{\G}{\mathbf{G}}
\newcommand{\SL}{\mathrm{SL}}
\newcommand{\GL}{\mathrm{GL}}
\newcommand{\bG}{\mathbf{G}}
\newcommand{\CC}{\mathbb C}
\newcommand{\RR}{\mathbb R}
\newcommand{\ZZ}{\mathbb Z}
\newcommand{\HH}{\mathbb H}
\def\<{\langle}
\def\>{\rangle}
\def\N{\mathbb{N}}
\def\R{\mathbb{R}}
\def\C{\mathbb{C}}
\def\Z{\mathbb{Z}}
\def\HH{\mathbb{H}}
\def\H{\mathbb{H}}
\def\Q{\mathbb{Q}}
\def\S{\mathbb{S}}
\def\Prob{\text{Prob}}
\def\G{\Gamma}
\def\g{\gamma}
\DeclareMathOperator{\Ad}{Ad}
\newtheorem{theorem}{Theorem}[section]
\newtheorem{lemma}[theorem]{Lemma}
\newtheorem{corollary}[theorem]{Corollary}
\newtheorem{proposition}[theorem]{Proposition}
\newtheorem{question}[theorem]{Question}
\theoremstyle{definition}
\newtheorem{definition}[theorem]{Definition}
\newtheorem{example}[theorem]{Example}
\newtheorem{remark}[theorem]{Remark}
\numberwithin{equation}{section}
\title[Irreducible groups and ergodicity in the boundary]{Irreducible groups and ergodicity in the boundary}
\author{Subhadip Dey}
\address{School of Mathematics, Tata Institute of Fundamental Research, Mumbai}
\email{subhadip@math.tifr.res.in}
\author{Sebastian Hurtado}
\address{Yale University, Department of Mathematics, 10 Hillhouse Ave, New Haven, CT 06511}
\email{sebastian.hurtado-salazar@yale.edu}
\subjclass[2010]{22E40, 11F06, 11F75, 57Q15}
\begin{document}
\maketitle
\begin{abstract}
 We show that if $G$ is a real semi-simple Lie group, and $\Gamma$ is a discrete subgroup of $G$ containing a subgroup $\Sigma$ acting  ergodically (in a strong sense) on the Furstenberg boundary of $G$, then $\Gamma$ is not isomorphic to a free product of $\Sigma$ with $\Z$.  Moreover, if $\Sigma$ has algebraic  entries, then $\Gamma$ has algebraic  entries as well. As a consequence, we show that if all irreducible discrete subgroups of $\SL_2(\R) \times \SL_2(\R)$ act ergodically on $\mathbb{S}^1\times \mathbb{S}^1 $, such groups cannot be free groups (or even Gromov hyperbolic). In the appendix, we discuss a connection between the existence of discrete irreducible groups and diophantine properties of Lie groups.
\end{abstract}


\begin{section}{Introduction}

Discrete subgroups of semi-simple Lie groups have been intensively studied for more than a century. However, a fairly comprehensive understanding of discrete subgroups of $\SL_2(\CC)$ (the Kleinian groups, or equivalently the fundamental groups of complete hyperbolic three-orbifolds) has emerged only over the past three decades.
The classification of finitely generated Kleinian groups was completed through the proof of Marden’s tameness Conjecture by Agol \cite{agol2004tameness} and independently by Calegari-Gabai \cite{calegari-gabai}, together with the proof of the Ending Lamination conjecture by Brock, Canary, Minsky \cite{ELC-I,ELC-II}.

Nonetheless, for general semi-simple Lie groups, even for $\SL_3(\R)$, a classification of finitely generated discrete subgroups, if such a classification is possible at all, remains far out of reach. Moreover, for higher-rank Lie groups, there is a striking scarcity of ``geometrically infinite'' examples.

For example, consider $G = \SL_2(\mathbb{R}) \times \SL_2(\mathbb{R})$. To the best of our knowledge, the only known examples of discrete subgroups are either irreducible lattices (all arithmetic by Margulis) or groups whose projection to one $\SL_2(\mathbb{R})$ factor is discrete. The latter class includes direct products of discrete subgroups of $\SL_2(\mathbb{R})$ (and their subgroups), as well as subgroups that are Anosov (or, more generally, relatively Anosov, transverse, and so on).

The following question attributed to Yves Benoist (see the historical discussion in \cite{brody2023greenberg}) appears to be still open:

\begin{question}
Suppose a subgroup $\Gamma \subset \SL_2(\RR) \times \SL_2(\RR)$ is discrete and \emph{irreducible} (i.e. projects densely in each $\SL_2(\R)$ factor),  can $\Gamma$ be isomorphic to a free group?
\end{question}

The only known examples of irreducible discrete subgroups as far as we know are arithmetic lattices; see the recent works by \cite{brody2023greenberg}, \cite{fisher2024commensurators} for related questions. Irreducible subgroups can be shown by an elementary argument to act minimally on the Furstenberg boundary  $\S^1 \times \S^1$, and also ergodically on each $\S^1$ factor. 

\medskip

A corollary of our main result is the following:

\begin{corollary}\label{irreducibleergodic}
    Suppose every irreducible discrete subgroup of $G = \SL_2(\RR) \times \SL_2(\RR)$  acts ergodically on the boundary $\S^1 \times \S^1$. Then the following hold:
    \begin{enumerate}[label=(\roman*)]
        \item No irreducible discrete subgroup $\Gamma$ of $G$ is hyperbolic (e.g., $\Gamma$ cannot be free).
        \item If $\Gamma$ is a hyperbolic discrete subgroup of $G$, then $\Gamma$ is virtually a free group or a surface group (e.g., $\Gamma$ can't be $\pi_1(S)*\Z$, for a closed hyperbolic surface $S$).
        \item If $M$ is a finite volume hyperbolic 3-manifold, then $\pi_1(M)$ does not admit a discrete and faithful homomorphism into $G$.
    \end{enumerate}
\end{corollary}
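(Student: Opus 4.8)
The three statements will all be deduced from our main theorem together with the structure theory of discrete subgroups of $\SL_2(\R)$ and of (relatively) hyperbolic groups; the hypothesis of the corollary enters only to upgrade ``irreducible'' to ``ergodic (in the strong sense)'' on $\mathcal B = \S^1\times\S^1$, so that a suitable subgroup may play the role of $\Sigma$ in the theorem. We shall use repeatedly the following elementary facts about $H\le \SL_2(\R)$: a proper closed subgroup is either discrete or amenable (its identity component is solvable or compact and its component group is finite), so the only non-amenable closed subgroup is $\SL_2(\R)$ itself; a discrete abelian subgroup is cyclic; a discrete subgroup normalised by a dense subgroup is central; and the normaliser of a non-elementary discrete (hence Fuchsian) subgroup is again discrete. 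We also use that a Gromov hyperbolic group contains no $\Z^2$, and that $\pi_1(M)$ for a finite-volume hyperbolic $3$-manifold $M$ contains no $F_2\times\Z$ (a central $\Z$ in an $F_2\times\Z$ would centralise a non-abelian free group, forcing that free group to be abelian).

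\smallskip
\emph{Proof of (i).} Let $\Gamma\le G$ be irreducible, discrete and Gromov hyperbolic. Then $\Gamma$ is non-elementary: it is infinite, and a virtually cyclic group cannot project densely to $\SL_2(\R)$. The plan is to exhibit inside $\Gamma$ a free-product decomposition $\Delta=\Sigma\ast\Z$ of a \emph{subgroup} $\Delta\le\Gamma$ with $\Sigma$ itself irreducible in $G$; then $\Delta$ is discrete, $\Sigma$ acts ergodically on $\mathcal B$ by the standing hypothesis, and our main theorem applied to $\Delta\supseteq\Sigma$ contradicts $\Delta\cong\Sigma\ast\Z$. Concretely one would look for $a,b\in\Gamma$ that freely generate a subgroup with $\langle\pi_i(a),\pi_i(b)\rangle$ dense in $\SL_2(\R)$ for $i=1,2$ (so that $\Sigma=\langle a,b\rangle$ is irreducible), together with a third element $c$ for which $\langle a,b,c\rangle=\langle a,b\rangle\ast\langle c\rangle$ (a ping-pong argument in the hyperbolic group $\Gamma$). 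The point requiring care — and what I expect to be the main obstacle — is the choice of $a,b$: a \emph{generic} pair plays ping-pong and hence has discrete (Schottky) projections rather than dense ones, so the pair must be chosen non-generically, escaping every proper closed subgroup of $\SL_2(\R)$ in both factors. When $\Gamma$ is not one-ended one can try to read off such a decomposition from the Grushko decomposition, splitting off a $\Z$-factor while keeping the remainder irreducible; for one-ended $\Gamma$, where no free splitting exists, I would instead invoke the ``algebraic entries'' half of the main theorem, deducing that such a $\Gamma$ would have algebraic entries and hence (via the arithmeticity discussion of the appendix) be an irreducible lattice — impossible, since irreducible lattices in $G$ contain $\Z^2$ and so are not hyperbolic.

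\smallskip
\emph{Proof of (ii).} This is formal given (i). Let $\Gamma\le G$ be hyperbolic and discrete; we may assume it is non-elementary (else it is virtually cyclic, hence virtually free). If $\Gamma$ were irreducible it would not be hyperbolic by (i); so $\Gamma$ is reducible, say $\overline{\pi_1(\Gamma)}\ne\SL_2(\R)$. The kernels $K_i=\ker(\pi_i|_\Gamma)$, being the intersections of $\Gamma$ with the complementary factors, are discrete in $\SL_2(\R)$; they are normal in $\Gamma$, commute, and have $K_1\cap K_2=1$, so since $\Gamma$ contains no $\Z^2$ they cannot both be infinite. Say $K_2$ is finite; passing to a finite-index subgroup we may take $K_2=1$, so $\Gamma\cong\pi_2(\Gamma)$. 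If $K_1$ is infinite it is a non-elementary Fuchsian group and $\pi_2(K_1)\trianglelefteq\pi_2(\Gamma)$, so $\pi_2(\Gamma)$ lies in the discrete normaliser of $\pi_2(K_1)$ and $\Gamma$ is Fuchsian. If $K_1$ is finite (hence trivial), then $\pi_1|_\Gamma$ is injective and $\Gamma\cong\pi_1(\Gamma)$ sits in a proper closed subgroup of $\SL_2(\R)$, so $\Gamma$ is Fuchsian or amenable, and an amenable hyperbolic group is virtually cyclic. In every case $\Gamma$ is virtually free or a surface group.

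\smallskip
\emph{Proof of (iii).} Suppose $\rho\colon\pi_1(M)\hookrightarrow G$ is discrete and faithful, $M$ of finite volume, and put $\Lambda=\rho(\pi_1(M))$. A routine case analysis shows $\rho$ must be irreducible: if some projection, say $\pi_1$, is non-dense, then either $\ker(\pi_1|_\Lambda)=1$, forcing $\pi_1(M)$ into a Fuchsian or amenable subgroup of $\SL_2(\R)$ — impossible, as $\pi_1(M)$ is non-amenable and, containing $\Z^2$, is not Fuchsian — or $\ker(\pi_1|_\Lambda)\ne 1$, in which case $\pi_2(\Lambda)$ is dense (otherwise the $K_1\times K_2$/no-$\Z^2$ argument, the absence of $F_2\times\Z$, and the fact that a peripheral $\Z^2\le\pi_1(M)$ is self-normalising again force a contradiction), so $\ker(\pi_1|_\Lambda)$ is central in $\SL_2(\R)$ and hence trivial, a contradiction. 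So $\rho$ is irreducible. If $M$ is closed, $\pi_1(M)$ is a cocompact lattice in $\PSL_2(\C)$: it is torsion-free, hyperbolic, one-ended and of cohomological dimension $3$, hence neither virtually free nor a surface group — contradicting (i), since $\Lambda$ is irreducible, discrete and hyperbolic. If $M$ is cusped, $\pi_1(M)$ is not hyperbolic; but since $\rho$ is irreducible both $\ker(\pi_i|_\Lambda)$ are central, hence trivial, so $\pi_1(M)$ embeds into a single $\SL_2(\R)$, i.e.\ acts faithfully on $\mathbb H^2$. This I would rule out using that $\pi_1(M)$ contains non-commuting free subgroups together with malnormal $\Z^2$ subgroups — a combination no subgroup of $\SL_2(\R)$ admits — or, alternatively, by producing inside $\Lambda$ an irreducible discrete \emph{free} subgroup so as to contradict (i); making this final step rigorous, where the same non-genericity difficulty as in the proof of (i) reappears, is where I expect the most work in the cusped case.
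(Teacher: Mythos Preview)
Your overall plan for (i) is right, but the step you yourself flag --- finding a free subgroup $\Sigma\le\Gamma$ whose projections remain \emph{dense} rather than Schottky --- is a genuine gap, and neither of your suggested fixes works. A Grushko splitting of $\Gamma$ need not have an irreducible free factor, and the ``algebraic entries $\Rightarrow$ arithmetic lattice'' route is unavailable: the algebraic-entries half of the main theorem requires an algebraic-entry $\Sigma$ already acting strongly ergodically, which is exactly what you have not produced, and the appendix concerns diophantine properties, not arithmeticity. The paper resolves the obstacle with a concrete construction. Since each projection of $\Gamma$ is dense in $\SL_2(\R)$, it contains an infinite-order elliptic element (Selberg's lemma plus density of traces), so $\Gamma$ contains elements $\gamma_1=(e_1,h_1)$ and $\gamma_2=(h_2,e_2)$ with $e_i$ infinite-order elliptic and $h_i$ hyperbolic. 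High powers $\gamma_1^n,\gamma_2^n$ play ping-pong in the hyperbolic group $\Gamma$, yet each projection $\langle e_i^n,h_j^n\rangle$ stays dense in $\SL_2(\R)$ because $e_i^n$ is still an irrational rotation. Replacing $\Gamma$ by this free rank-two subgroup, one then takes $\Sigma=\langle\gamma_1^2,\gamma_2^2\rangle$ and checks that $\langle\Sigma,\gamma_1\gamma_2\rangle\cong\Sigma*\Z$; every finite-index subgroup of $\Sigma$ contains powers of $\gamma_1$ and $\gamma_2$ and is therefore again irreducible, so the standing hypothesis upgrades to \emph{strong} ergodicity and the main theorem applies.

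Your (ii) is essentially correct and close to the paper's argument. For (iii), the closed case reduces to (i) as you say, but your cusped case is incomplete: you reduce to ruling out a faithful (non-discrete) embedding $\pi_1(M)\hookrightarrow\SL_2(\R)$, and the proposed obstruction (``malnormal $\Z^2$ together with free subgroups'') is not clearly a contradiction, since $\SL_2(\R)$ contains abstract copies of $\Z^2$ inside any one-parameter subgroup alongside plenty of free subgroups. The paper bypasses this entirely: by the surface-subgroup theorems for finite-volume hyperbolic $3$-manifolds, $\pi_1(M)$ contains a closed surface group and hence a subgroup isomorphic to $\pi_1(S)*\Z$, which is hyperbolic but neither virtually free nor a surface group --- so (ii) gives the contradiction directly.
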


\subsection{Our results}

For the rest of the article, we will let $G$ denote a connected noncompact real semi-simple Lie group with finite center, let $B$ a maximal connected amenable subgroup of  $G$ and let us call the quotient $G/B$ the \emph{Furstenberg boundary} of $G$.

\medskip

We make the following definitions:

\begin{definition}
We say that a subgroup $\Gamma$ of a real semi-simple Lie group  $G$ has {\em algebraic traces} (respectively, {\em integral traces} ) if for  any $\gamma \in \Gamma$, the trace of $\text{Ad}(\gamma)$ (the image of $\gamma$ under the adjoint representation) is an algebraic number (respectively, algebraic integer). 
\end{definition}

\begin{definition}
We say that a subgroup $\Gamma$ of a real semi-simple Lie group  $G$ has {\em algebraic entries} (respectively, {\em integral entries} ) if for some faithful representation $\rho: G \to GL(V)$ in a real vector space and some appropriate basis of $V$,  the image $\rho(\Gamma)$ consists of matrices with entries in the algebraic numbers $\bar{\Q}$ (respectively, entries in the algebraic integers). This property is independent of the representation, see \ref{algebraicandintegraltraces}.

\end{definition}

\begin{definition}
We will use the following notation, which is not standard. We say that a subgroup $\Sigma < G$ acts  \emph{strongly ergodically} on the Furstenberg boundary if every finite index subgroup of $\Sigma$ acts ergodically on the Furstenberg boundary of $G$. (See Definition \ref{strongergodic}.)
\end{definition}

Our results fits into the general philosophy that discrete subgroups of semi-simple Lie groups which are sufficiently large have algebraic constraints (both group theoretic and arithmetic). Our main result is the following:

\begin{theorem}\label{main}
Let $\Gamma$ be a finitely generated Zariski dense discrete subgroup of $G$, and suppose that $\Sigma < \Gamma$ is a subgroup such that $\Sigma$ acts strongly ergodically on the Furstenberg boundary $G/B$. Then:
\begin{enumerate}[label=(\roman*)]
\item If $\Sigma$ has algebraic entries, then $\Gamma$ has algebraic entries. 
\item If $\Sigma$ has integral entries, then $\Gamma$ has integral entries.
\item For every non-trivial $\gamma \in \Gamma$, the natural homomorphism from the abstract free product $\langle \gamma \rangle * \Sigma \to \Gamma$ is not faithful.
\end{enumerate}
\end{theorem}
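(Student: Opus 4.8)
The plan is to derive all three statements from a single principle: a Zariski dense discrete subgroup $\Gamma$ of $G$ containing a subgroup $\Sigma$ that acts strongly ergodically on $G/B$ cannot act on a simplicial tree, or on the Bruhat–Tits building of a semisimple group over a non-archimedean local field, in such a way that $\Sigma$ is elliptic (fixes a point) while the $\Gamma$-action is unbounded (fixes no point, no end, and no point at infinity). Granting this, part (iii) is immediate: if $\langle\gamma\rangle * \Sigma\to\Gamma$ were faithful for some $\gamma\ne e$, then $\Gamma':=\langle\gamma,\Sigma\rangle\cong\langle\gamma\rangle*\Sigma$ is a discrete, Zariski dense (it contains $\Sigma$) subgroup of $G$ acting on its Bass–Serre tree $\mathcal{T}$ with $\Sigma$ fixing a vertex $v_0$; being a free product of two nontrivial groups with $\Sigma$ infinite (strong ergodicity on the positive-dimensional space $G/B$ forces $\Sigma$ infinite), $\Gamma'$ fixes no vertex, no edge, no end and no pair of ends of $\mathcal{T}$ — contradicting the principle. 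For (i) and (ii) one reduces through the trace field: by the cited lemma it suffices to argue about the traces of $\ad(\Gamma)$, so assume $\Sigma$ has algebraic (resp.\ integral) $\ad$-traces and, for contradiction, that $\Gamma$ has a non-algebraic (resp.\ non-integral) $\ad$-trace $\tau$ in its finitely generated trace field $k$. If $\tau$ is transcendental, extend to $k$ a valuation $v$ of $k_0(\tau)$ that is trivial on the number field $k_0=\overline{\QQ}\cap k$ and has $v(\tau)<0$; if $\tau$ is algebraic (the case relevant to (ii) once (i) is known), take a finite place $v$ of the number field $k$ with $v(\tau)<0$. In either case $\Sigma$ has $v$-integral traces, hence is bounded in $\mathbf{G}(k_v)$ and fixes a point of the Bruhat–Tits building $\mathcal{B}_v$, whereas $\Gamma$ is unbounded there and, by Zariski density, fixes no point at infinity; the principle is again contradicted, proving (i) and (ii).

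The principle is proved by boundary theory. Since $B$ is amenable, the $G$-action on $(G/B,\nu_0)$ with $\nu_0$ the Lebesgue-class measure is amenable, hence so is the restricted $\Gamma$-action, and amenability produces a $\Gamma$-equivariant measurable map $\phi\colon G/B\to\Prob(\partial\mathcal{T})$, where $\partial\mathcal{T}$ is the end/visual boundary (compact when $\mathcal{T}$ is locally finite; for the Bass–Serre tree one works with a suitable compact model of $\overline{\mathcal{T}}$ or with the boundary of a random walk). Now use that $\Sigma$ fixes a point $x_0$: there is a $\Sigma$-equivariant "direction" map $\pi_{x_0}\colon\partial\mathcal{T}\to L_{x_0}$, where $L_{x_0}$ is the set of edges at $x_0$ (tree case) or the link of $x_0$ (building case, a spherical building over a \emph{finite} field, hence a finite set). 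In the tree case of part (iii) the edges at $v_0$ form a free $\Sigma$-torsor, so $\bar\phi:=(\pi_{v_0})_*\circ\phi\colon G/B\to\Prob(\Sigma)$ is $\Sigma$-equivariant; since every probability measure on the countable set $\Sigma$ has atoms, its (finite, nonempty) set of atoms of maximal mass defines a $\Sigma$-equivariant measurable $S\colon G/B\to\binom{\Sigma}{m}$ for a fixed $m\ge1$; by ergodicity $S_*\nu_0$ is carried by a single $\Sigma$-orbit $\Sigma/F$ with $F$ (the stabilizer of an $m$-subset of a free $\Sigma$-set) \emph{finite}, and then for a.e.\ $x$ in the positive-measure set $S^{-1}(eF)$ the return set $\{\sigma\in\Sigma:\sigma x\in S^{-1}(eF)\}$ lies in $F$ and is finite — impossible, since an ergodic action on the non-atomic probability space $(G/B,\nu_0)$ is conservative. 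In the building case $L_{x_0}$ is finite, so $\Sigma$ acts on it through a finite quotient with kernel $\Sigma_1$ of finite index; $\Sigma_1$ acts trivially on $\Prob(L_{x_0})$, so $(\pi_{x_0})_*\circ\phi$ is $\Sigma_1$-invariant, and strong ergodicity forces it to be essentially constant, equal to some $\mu_1$. Equivariance propagates this: for $y=gx_0$ in the $\Gamma$-orbit of $x_0$ one gets $(\pi_y)_*\phi(x)=g_*\mu_1$, independent of $x$; feeding this into the contracting dynamics of hyperbolic elements of $\Gamma$ on $\mathcal{B}_v$ pins down $\mu_1$, hence $\phi$ itself, so $\phi$ is essentially constant and its value is a $\Gamma$-invariant probability measure on $\partial_\infty\mathcal{B}_v$, forcing $\Gamma$ into a proper $k_v$-parabolic and contradicting Zariski density.

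The main obstacle is the last step of the building case: showing that the rigidity $(\pi_{x_0})_*\phi\equiv\mu_1$ coming from strong ergodicity genuinely forces $\phi$ to be constant. This needs control of the support of $\phi(x)$ — pushing $\phi$ onto the limit set of $\Gamma$ in $\partial_\infty\mathcal{B}_v$, e.g.\ via stationarity or again by ergodicity of $\Sigma$ applied to the mass of the ``wandering at infinity'' part — together with the fact that the sectors based at the orbit $\Gamma x_0$, on which $\phi(x)$ is now independent of $x$, separate the Borel structure of the relevant part of $\partial_\infty\mathcal{B}_v$; the latter is a density argument resting on the abundance of hyperbolic elements of $\Gamma$ with independent axes, which holds because $\Gamma$ is Zariski dense. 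Two further points should be routine or covered by the cited lemma: the amenability-to-$\Prob(\partial\mathcal{T})$ step when $\mathcal{T}$ is not locally finite, and the standard dictionary relating integral/algebraic $\ad$-traces of a (virtually) Zariski dense subgroup, boundedness in $\mathbf{G}(k_v)$, and descent of the group to its trace field (including the quaternion-algebra subtlety, and the fact that the Zariski closure of a strongly ergodic $\Sigma$ is reductive). By contrast, the tree case of the principle — hence all of part (iii) — is essentially complete as sketched and uses only ergodicity, via conservativity, rather than strong ergodicity.
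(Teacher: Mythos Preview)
Your core strategy—manufacture a non-archimedean target in which $\Sigma$ is bounded but $\Gamma$ is not, then use a $\Gamma$-equivariant map $\phi:G/B\to\Prob(X)$ to reach a contradiction—is the paper's. The execution, however, diverges at exactly the point you flag as the ``main obstacle,'' and there the gap is real. You invoke strong ergodicity only once, for the single finite-index kernel $\Sigma_1=\ker(\Sigma\to\mathrm{Sym}(L_{x_0}))$, and then try to bootstrap to constancy of $\phi$ via ``sectors at $\Gamma x_0$ separate the Borel structure of the limit set''; but cones based at a single $\Gamma$-orbit, which need not be cobounded in the building, have no reason to separate measures on the visual boundary. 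The paper's argument is both simpler and uses the hypothesis in full: work with $X=\mathbb P^{n-1}(k)$ rather than the building boundary, use that $\GL_n(k)$ acts \emph{smoothly} on $\Prob(\mathbb P^{n-1}(k))$ to land $\phi$ in a single homogeneous space $\GL_n(k)/L$, and take a descending sequence of compact open subgroups $C_m\to\{1\}$. Since $\rho(\Sigma)$ is bounded, each $\Sigma_m:=\Sigma\cap\rho^{-1}(C_m)$ has finite index in $\Sigma$; strong ergodicity of $\Sigma_m$ forces $\phi$ into a single $C_m$-orbit in $\GL_n(k)/L$, and letting $m\to\infty$ gives $\phi$ constant. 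Furstenberg's lemma applied to the resulting $\Gamma$-invariant measure on $\mathbb P^{n-1}(k)$ then contradicts strong irreducibility of $k^n$.

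There is a second gap in your route to (iii): the Bass--Serre tree of $\Sigma*\langle\gamma\rangle$ is not locally finite (the $\Sigma$-vertex has valence $|\Sigma|$), its end space is not compact, and the amenability machine does not directly produce $\phi:G/B\to\Prob(\partial\mathcal T)$; this is not routine. The paper avoids the tree altogether and instead \emph{constructs} a representation $\Sigma*\mathbb Z\to\GL_n(k_\nu)$ with bounded $\rho(\Sigma)$ and unbounded $\rho(\mathbb Z)$ (map $\Sigma$ Zariski-densely into a simple Levi factor of its own Zariski closure, arrange this over a number field, choose a place where the image is bounded, send the free generator anywhere unbounded), and then applies the lemma above on the compact space $\mathbb P^{n-1}(k_\nu)$. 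Your conservativity argument is attractive—were $\phi$ available it would prove (iii) from ergodicity of $\Sigma$ alone, stronger than what the paper obtains—but as written the existence of $\phi$ is missing. For (i)--(ii) the paper also proceeds differently from your direct-valuation sketch: rather than extending a valuation to a transcendental trace field, it deforms $\Gamma$ inside the variety $\{\varphi:\Gamma\to G\mid\varphi|_\Sigma=\mathrm{id}\}$ (defined over a number field, since $\Sigma$ has algebraic entries) to move a transcendental trace to a rational $m/p$, thereby landing in a genuine number field and finite place; this sidesteps the compatibility issues (simultaneously realizing $\Gamma\subset\mathbf G(k)$ and $\Sigma\subset\mathbf G(k\cap\overline{\QQ})$ in one model, and passing from integral traces to boundedness without Zariski density of $\Sigma$) that your outline leaves open.
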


The main ideas in the proof of our theorems come from the work by Chatterji-Venkataramana \cite{chatterji2009discrete}, and by now classical ideas in the proofs of local rigidity and super-rigidity of lattices. In fact, Chatterji-Venkataramana proved under related hypothesis that the group $\Gamma$ must be super-rigid (which is much stronger than we prove). Nonetheless, we also provide in Appendix \ref{geomproof} an alternative geometric proof of Item (3) in some special cases, by a more geometric argument.

We will now discuss some consequences of our work, and highlight some questions for further research.

\subsection{Subgroups of products containing diagonal lattices}

To the best of our knowledge, the following question is open:

\begin{question} 
Let $G$ be a noncompact real semi-simple Lie group with trivial center and let $\Sigma \subset G$ be a lattice. Let $\Delta: G \to G \times G$ be the diagonal embedding and suppose that $\Gamma$ is an irreducible discrete subgroup in $G \times G$ that contains $\Delta(\Sigma)$. Is $\Gamma$ necessarily a lattice?
\end{question}

We remark that the condition ``irreducible'' in the above question cannot be replaced by the condition ``Zariski dense,'' as Bass-Lubotzky \cite{bass-lubotzky} constructed Zariski dense discrete subgroups $\Gamma$ of $G\times G$, where $G = {\rm F}_4^{-20}$, which are not lattices in $G\times G$, yet intersect the diagonal copy of $G$ in $G\times G$ in a lattice. Moreover, these groups are super-rigid, and were the first counterexamples to a conjecture of Platonov (a conjecture that remains open for simple higher rank Lie groups).

Venkataramana studied subgroups containing diagonally embedded  lattices in various works (see, e.g., \cite{Venkataramana0,Venkataramana1}), and proved for example that if $G$ is a higher rank Lie group, such subgroups must be super-rigid. He also showed that sometimes these subgroups must be necessarily lattices. (For example, he showed that if $\SL_2(\ZZ) \subsetneq \Gamma \subset \SL_2(\ZZ[\sqrt{2}])$, then $\Gamma$ must be a finite index subgroup of $\SL_2(\ZZ[\sqrt{2}])$; see \cite[Prop. 2.1]{Venkataramana1}.)

We have the following corollary of Theorem \ref{main}:

\begin{corollary}\label{diagonallattice} Let $\Gamma$ be a Zariski dense discrete subgroup of $G \times G$ such that $\Gamma$ contains $\Delta(\Sigma)$, then $\Gamma$ is not equal to a nontrivial free product with $\Delta(\Sigma)$ as a free factor. Moreover if $\Sigma$ has algebraic (resp. integral) entries, then $\Gamma$ has algebraic (resp. integral) entries.
\end{corollary}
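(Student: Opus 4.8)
The plan is to deduce the corollary from Theorem~\ref{main}, with $G\times G$ as the ambient semisimple group (its Furstenberg boundary is $(G/B)\times(G/B)$, and $B\times B$ is a maximal connected amenable subgroup) and with $\Delta(\Sigma)<\Gamma$ playing the role of the strongly ergodic subgroup. Everything then reduces to one non-formal point: that $\Delta(\Sigma)$ acts strongly ergodically on $(G/B)\times(G/B)$.

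To establish this, note that a finite-index subgroup of $\Delta(\Sigma)$ is $\Delta(\Sigma')$ for a finite-index $\Sigma'<\Sigma$, and $\Sigma'$ is again a lattice in $G$; so it suffices to show that any lattice $\Sigma'$ in $G$ acts ergodically on $(G/B)\times(G/B)$ under the diagonal $G$-action. By the Bruhat decomposition this action has a unique open dense orbit (the orbit of a pair of opposite points), whose complement is a null set and whose stabilizer $L$ contains the maximal $\R$-split torus $A$. By Moore's ergodicity theorem $A$, hence $L$, acts ergodically on $G/\Sigma'$; dualizing to the $\Sigma'$-action on $G/L$ shows that $\Sigma'$ is ergodic on $G/L$, hence on $(G/B)\times(G/B)$. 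This is the step where the actual content lies; the point requiring care is the correct form of Moore's theorem for a semisimple (not necessarily simple) $G$ and a possibly reducible lattice. In the rank-one case $G=\SL_2(\R)$ it is simply the classical ergodicity of a lattice acting on $\S^1\times\S^1$ off the diagonal, equivalently of the geodesic flow on $G/\Sigma'$.

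With strong ergodicity in hand, Theorem~\ref{main} applies to every finitely generated Zariski dense discrete subgroup of $G\times G$ containing $\Delta(\Sigma)$. Since $\Gamma$ itself need not be finitely generated, I would pass to such subgroups. For the arithmetic part: given $\gamma\in\Gamma$, use that the Zariski closure of any group equals that of one of its finitely generated subgroups, so that (as $\Sigma$ is finitely generated, being a lattice) there are $h_1,\dots,h_k\in\Gamma$ with $\langle\Delta(\Sigma),h_1,\dots,h_k\rangle$ Zariski dense in $G\times G$; put $\Gamma_1=\langle\Delta(\Sigma),\gamma,h_1,\dots,h_k\rangle$. Then $\Gamma_1$ is finitely generated, Zariski dense, discrete, and contains $\Delta(\Sigma)$, which has algebraic (resp.\ integral) entries whenever $\Sigma$ does --- via the faithful representation $(g_1,g_2)\mapsto\rho_0(g_1)\oplus\rho_0(g_2)$ of $G\times G$ built from a representation $\rho_0$ realizing $\Sigma$ over $\bar\Q$ (resp.\ over the algebraic integers), under which $\Delta(\sigma)$ is block-diagonal with both blocks $\rho_0(\sigma)$. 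Theorem~\ref{main}(i)--(ii) then forces $\Gamma_1$, in particular $\gamma$, to have algebraic (resp.\ integral) entries; since $\gamma$ is arbitrary, so does $\Gamma$. For the free-product statement, suppose $\Gamma=\Delta(\Sigma)*H$ with $H\neq\{1\}$, and choose $h_1,\dots,h_k\in H$ so that $\Gamma_0=\langle\Delta(\Sigma),h_1,\dots,h_k\rangle$ is Zariski dense in $G\times G$ (some $h_i$ must be nontrivial, since the Zariski closure of $\Delta(\Sigma)$ lies in the proper subgroup $\Delta(G)$). The normal form theorem for free products gives $\Gamma_0=\Delta(\Sigma)*\langle h_1,\dots,h_k\rangle$, so for any nontrivial $\gamma\in\langle h_1,\dots,h_k\rangle$ the natural map $\langle\gamma\rangle*\Delta(\Sigma)\to\Gamma_0$ is injective --- contradicting Theorem~\ref{main}(iii) applied to the finitely generated Zariski dense discrete group $\Gamma_0$ and its subgroup $\Delta(\Sigma)$. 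Apart from the ergodicity input, all of this is bookkeeping with free products and with Zariski closures.
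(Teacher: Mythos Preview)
Your approach is essentially the same as the paper's: establish strong ergodicity of $\Delta(\Sigma)$ on $(G/B)\times(G/B)$ via Howe--Moore/Moore ergodicity (the paper simply writes that this space ``is isomorphic to $G/A$'' and invokes Howe--Moore, which is your Bruhat-cell argument in compressed form), then apply Theorem~\ref{main}. Your reduction to finitely generated subgroups to match the hypothesis of Theorem~\ref{main}, and your explicit handling of the free-product statement via normal forms, are details the paper's proof elides entirely---so your version is in fact more complete than what appears there.
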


\begin{remark}
Unless $\Sigma$ is a non-uniform lattice in $G = \SL_2(\R)$, it follows that $\Gamma$ is not even abstractly isomorphic to a non-trivial free product, as $\Sigma$ is one ended and subgroups with one end of free products are contained in one of the factors up to conjugation.
\end{remark}


Consider a closed surface of genus $g \geq 2$, and take two discrete and faithful embeddings $\rho_i: \pi_1(S) \to \SL_2(\R)$, $i =1,2$,  and let $$\Gamma = \{ (\rho_1(\gamma), \rho_2(\gamma) )  \ \ |  \ \ \gamma \in \pi_1(S) \},$$ which is a discrete subgroup of $\SL_2(\RR)\times \SL_2(\RR)$ (we will call these subgroups \emph{Anosov surface groups}). Based on Corollary \ref{diagonallattice}, we expect a negative answer to the following question:

\begin{question} Suppose that a Zariski dense subgroup $\Gamma \subset \SL_2(\RR) \times \SL_2(\RR) $ contains an Anosov surface group. Can $\Gamma$ be written as a nontrivial free product?
\end{question}

Finally, we remark that if all irreducible discrete subgroups of $\SL_2(\RR) \times \SL_2(\RR)$ were known to act ergodically on its Furstenberg boundary $\mathbb S^1\times \mathbb S^1$, then a negative answer would follow from \Cref{irreducibleergodic} Item (2).

\subsection{Hyperbolic manifolds with integral traces}

If $\Gamma$ is a lattice of $G =\text{Isom}^{+}(\H^n)$ and $n \geq 3$, it follows from local rigidity that $\Gamma$ has algebraic entries (in any faithful representation, see \Cref{algebraictraces}). Let $\Ad(\Gamma)$ be the image of $\Gamma$ under the adjoint representation and let $\bG$ be the Zariski closure of $\Ad(\Gamma)$. 

If we let $k$ be the adjoint trace field (the field generated by traces under the adjoint representations), it follows that up to conjugation $\Ad(\Gamma) \subset \bG(k)$ and $k$ is the smallest such field (see \cite{Vinberg1974Field}). We will identify $\Gamma$ with $\Ad(\Gamma)$. Let $O_k$ be the ring of integers in $k$.



\begin{definition}Let us define the {\em integral subgroup} of $\Gamma$ as $$\Gamma_0 := \Gamma \cap \bG(O_k)$$ and define the \emph{integral critical exponent} $\delta_{\Gamma_0}$ as the critical exponent of $\Gamma_0$.
\end{definition}

Although the group $\Gamma_0$ is only well defined up to finite index, the integral critical exponent is well defined.  We have the following elementary proposition:

\begin{proposition}
If $\Gamma$ is a lattice in ${\rm Isom}(\HH^n)$, $n\geq2$, and $|\Gamma_0| = \infty$, then: 

\begin{enumerate}
\item $\Gamma_0$ is commensurated by $\Gamma$.
\item $\Gamma_0$ has full limit set.
\item $\delta_{\Gamma_0} > 0$.
\end{enumerate}

\end{proposition}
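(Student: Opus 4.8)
The plan is to establish the three items in order, with (1) carrying the arithmetic content and (2)--(3) following by standard hyperbolic geometry.

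For (1), I would conjugate so that $\Gamma\subset\bG(k)$ and fix a $k$-rational embedding $\bG\hookrightarrow\GL_N$. Given $\gamma\in\Gamma$, since $\gamma$ and $\gamma^{-1}$ have entries in $k$ there is a nonzero $d\in O_k$ with $d\gamma,\,d\gamma^{-1}\in M_N(O_k)$. Let $\Gamma_1\le\Gamma_0$ be the subgroup of elements congruent to $I$ modulo $(d^2)$; since $O_k/(d^2)$ is finite, $\Gamma_1$ has finite index in $\Gamma_0$. For $g\in\Gamma_1$, writing $g=I+d^2A$ with $A\in M_N(O_k)$ gives $\gamma g\gamma^{-1}=I+(d\gamma)A(d\gamma^{-1})\in\bG(O_k)$ (and likewise for $g^{-1}$), and it lies in $\Gamma$, hence in $\Gamma_0$. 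Thus $\gamma\Gamma_1\gamma^{-1}\subseteq\Gamma_0$, so $\Gamma_1\subseteq\Gamma_0\cap\gamma^{-1}\Gamma_0\gamma$ has finite index in $\Gamma_0$; running the same argument with $\gamma^{-1}$ in place of $\gamma$ (the same $d$ works) shows this intersection also has finite index in $\gamma^{-1}\Gamma_0\gamma$. Hence $\Gamma$ commensurates $\Gamma_0$. This is really the classical fact that a linear group over a number field commensurates its integral-point subgroups, and finite generation of $\Gamma$ is not even needed.

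For (2), I would first note that $\Gamma_0$ is discrete in $\isom(\HH^n)$ and infinite, so its orbit in $\HH^n$ is unbounded and $\Lambda(\Gamma_0)$ is a nonempty closed $\Gamma_0$-invariant subset of $\partial\HH^n$. The key point is that $\Lambda(\Gamma_0)$ is in fact $\Gamma$-invariant: by (1), for each $\gamma\in\Gamma$ the groups $\Gamma_0$ and $\gamma\Gamma_0\gamma^{-1}$ share a common finite-index subgroup, and passing a boundary-approximating orbit sequence to a single coset shows that a finite-index subgroup has the same limit set as the ambient group, so $\gamma\Lambda(\Gamma_0)=\Lambda(\gamma\Gamma_0\gamma^{-1})=\Lambda(\Gamma_0)$. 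Since $\Gamma$ is a lattice with $n\ge2$ it is non-elementary and $\Lambda(\Gamma)=\partial\HH^n$, and $\Lambda(\Gamma)$ is the smallest nonempty closed $\Gamma$-invariant subset of $\partial\HH^n$; therefore $\Lambda(\Gamma_0)\supseteq\partial\HH^n$, i.e.\ $\Gamma_0$ has full limit set.

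For (3), by (2) the set $\Lambda(\Gamma_0)$ is infinite, so $\Gamma_0$ is non-elementary and hence contains a rank-two Schottky subgroup $F\cong F_2$; Schottky groups have positive critical exponent and the critical exponent is monotone under inclusion, so $\delta_{\Gamma_0}\ge\delta_F>0$. The only delicate points are keeping the arithmetic in (1) clean --- the choice of embedding and of the level $(d^2)$ so that conjugation genuinely preserves integrality --- and, in (2), correctly upgrading $\Gamma_0$-invariance of the limit set to $\Gamma$-invariance via commensuration rather than normality; once these are in place, (3) and the deductions above are classical.
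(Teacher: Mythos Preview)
The paper does not supply a proof of this proposition; it simply labels it ``elementary'' and moves on. Your argument is correct and is precisely the sort of standard reasoning the authors have in mind: the congruence-subgroup trick for (1) is the classical proof that arithmetic subgroups are commensurated by the ambient $k$-points, and (2)--(3) are routine consequences once commensuration is in hand.

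One very small remark on (2): your phrase ``passing a boundary-approximating orbit sequence to a single coset'' works cleanly only if you use \emph{right} cosets, writing $G=\bigcup H g_i$ so that $Gx=\bigcup H(g_i x)$ decomposes into finitely many $H$-orbits with the common limit set $\Lambda(H)$; with left cosets one lands on $g_i^{-1}\xi\in\Lambda(H)$ rather than $\xi\in\Lambda(H)$. This is a phrasing issue, not a gap, and the conclusion $\Lambda(H)=\Lambda(G)$ for finite-index $H$ is of course standard.
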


Alan Reid has asked the following question: 

\begin{question} Can $\Gamma_0 = \{e\}$?
Equivalently, can a hyperbolic manifold have non-integral traces for all non-trivial elements?
\end{question}

Not much is known about how large or small $\Gamma_0$ can be, and one can ask the following question:

\begin{question} For fixed $n \geq 2$, and among  lattices $\Gamma$ of ${\rm Isom}(\HH^n)$, what are the possible values of $\delta_{\Gamma_0}$? Is this set of values dense in $[0,n-1]$?
\end{question}

Similar to the results proved by Bader-Fisher-Miller-Stover \cite{bader-miller-fisher-stover}, one can ask:

\begin{question}
Suppose $\Gamma$ is a lattice of ${\rm Isom}(\HH^3)$, and suppose that $\Gamma$ contains infinitely many surface subgroups (i.e. isomorphic to the fundamental group of a closed surface and not commensurable among themselves) with integral entries. Does $\Gamma$ have integral entries? What can we say when $n > 3$?
\end{question}

From Theorem \ref{main}, the following result follows:

\begin{corollary} \label{hyperbolictraces}
Suppose $\Gamma$ is a lattice of ${\rm Isom}(\H^n)$, and suppose that a subgroup $\Gamma_0 \subset \Gamma$ acts strongly ergodically on $\partial \H^n$ and has integral entries, then $\Gamma$ has integral entries.
\end{corollary}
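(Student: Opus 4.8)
The plan is to obtain this as a direct specialization of Theorem~\ref{main} after translating between hyperbolic geometry and the Lie-theoretic setup. Let $G = \mathrm{SO}^+(n,1) = \mathrm{Isom}^+(\H^n)$, a connected noncompact real semisimple Lie group with finite center, so the standing hypotheses on the ambient group in Theorem~\ref{main} are met. A maximal connected amenable subgroup $B \le G$ is a minimal parabolic $MAN$, and it is exactly the stabilizer of a point $\xi$ of the visual boundary; the orbit map $gB \mapsto g\xi$ is a $G$-equivariant homeomorphism $G/B \cong \partial\H^n \cong S^{n-1}$ carrying the $G$-quasi-invariant measure class to the Lebesgue class on the sphere. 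Hence the hypothesis that $\Gamma_0$ acts strongly ergodically on $\partial\H^n$ is literally the hypothesis that $\Gamma_0$ acts strongly ergodically on $G/B$. I will run the argument assuming $\Gamma \subseteq \mathrm{Isom}^+(\H^n)$; the general case follows by applying it to the finite-index subgroups $\Gamma \cap \mathrm{Isom}^+(\H^n)$ and $\Gamma_0 \cap \mathrm{Isom}^+(\H^n)$ (strong ergodicity passes to finite-index subgroups by definition, and integral entries to all subgroups), together with the routine fact --- immediate from local rigidity when $n \ge 3$ --- that integral entries transfer back from a finite-index subgroup of a lattice to the lattice.

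Next I would verify the remaining hypotheses of Theorem~\ref{main} for $\Gamma$. It is discrete because it is a lattice. It is finitely generated (indeed finitely presented): lattices in semisimple Lie groups are, here by reduction theory in rank one (Garland--Raghunathan), the finite-volume orbifold $\H^n/\Gamma$ being the interior of a compact manifold-with-boundary. And it is Zariski dense in $G$ by the Borel density theorem, as $G$ is semisimple without compact factors and $\Gamma$ has finite covolume. With all hypotheses of Theorem~\ref{main} in place for $\Gamma$, take $\Sigma := \Gamma_0$: by assumption it acts strongly ergodically on $G/B$ and has integral entries, so item~(ii) of Theorem~\ref{main} yields that $\Gamma$ has integral entries, which is the assertion of the corollary.

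There is essentially no obstacle internal to this corollary: the whole weight of the argument --- promoting ergodicity of a subgroup on the boundary to an arithmetic constraint on the entire group --- is borne by Theorem~\ref{main}, which we invoke as given. The only point meriting a line of care is the reduction to $\mathrm{Isom}^+(\H^n)$, that is, checking that having integral entries propagates to a finite-index overgroup of a lattice; this is cleanest at the level of the adjoint representation and the adjoint trace field $k$, where, since $\bG$ is defined over $k$ and $k$ is a commensurability invariant by Vinberg, a conjugate of $\Ad(\Gamma)$ lying in $\bG(O_k)$ exists once one exists for a finite-index subgroup, by a standard stable-lattice argument (and for $n \ge 3$ this is immediate, $\Gamma$ itself having algebraic entries by local rigidity).
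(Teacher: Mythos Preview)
Your proof is correct and takes essentially the same approach as the paper: identify $\partial\H^n$ with the Furstenberg boundary $G/B$, check that the lattice $\Gamma$ is finitely generated, discrete, and Zariski dense, and then feed $\Sigma=\Gamma_0$ into the main theorem. The only cosmetic difference is that the paper routes through \Cref{thm:inttrace} together with \Cref{rem:Zdense_trace} rather than the packaged \Cref{main}, and for that it first observes that $\Gamma_0$ itself is Zariski dense (ergodicity on the sphere forces full limit set); your extra care about passing to $\mathrm{Isom}^+(\H^n)$ is not addressed in the paper but is harmless.
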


This implies in particular that if $\Gamma$ is a lattice of  $\text{Isom}(\HH^3)$ and contains one geometrically infinite surface subgroup with integral traces, then $\Gamma$ has integral traces, as geometrically infinite surface groups always act ergodically on $\partial \HH^3$ (this last fact could be proven more elementary using the fact that geometrically infinite surface groups are virtual fiberings over the circle).

\subsection{Acknowledgments}
We would like to thank many people for various conversations and discussions related to this work, among them (while forgetting some)  Juan Piriz Lorenzo, Ilia Smilga, Misha Kapovich, Yair Minsky, Mohan Swaminathan, Mikolaj Fraczyk, David Fisher, Wouter Van Limbeek, and Alex Lubotzky.

\end{section}

\begin{section}{Preliminaries}

Unless stated otherwise, we will use the following notation throughout this paper:

\begin{subsection}{Notation}
Let $G$ denote a semi-simple real Lie group with finite center. Let $A$ denote a choice of a  Cartan subgroup of $G$, Let $B$ be a minimal parabolic subgroup of $G$ containing $A$ ($B$ is the unique maximal connected amenable subgroup of $G$ up to conjugation) and let $K$ be a maximal compact subgroup of $G$. Let $M \subset K$ be the centralizer of $A$ in $K$, and $N$ be the unipotent radical of $B$. We have that $B = MAN$. We also recall the Iwasawa decomposition of $G$ given by $G = KAN$. Let us define the Furstenberg boundary to be the group $\mathcal{F} := G/B$.

\subsection{Strong ergodicity}

The group $G$ acts continuously on the boundary $\mathcal{F}$ by left multiplication and preserves the Lebesgue class (but does not preserve the Lebesgue measure). We will use the following definition (the second part is not standard): 

\begin{definition}\label{strongergodic}
We say that a subgroup $\Sigma < G$ acts \emph{ergodically} on the Furstenberg boundary $\mathcal{F}$ if any Lebesgue measurable set which is $\Sigma$-invariant, has zero or full Lebesgue measure. We say that $\Sigma$ acts \emph{strongly ergodically} on $\mathcal{F}$ if every finite index subgroup of $\Sigma$ acts ergodically on $\mathcal{F}$.
\end{definition}

Not every group that acts ergodically on the boundary necessarily acts strongly ergodically as it was pointed out to us on MathOverflow by Moishe Kohan \cite{MoisheKohan}. Examples of such kind are as follows:

\begin{example}\label{ex} Let $G = \SL_2(\RR)$, consider $\Gamma_0 \subset G$ so that $\HH^2/\Gamma_0$ is a $\ZZ^d$-cover, $d\ge 3$, of a compact hyperbolic surface, and let $\Gamma_1 < \Gamma_0$ be the index two subgroup of $\Gamma_0$, such that $\HH^2/\Gamma_0$ is obtained from $\HH^2/\Gamma_1$ by cutting along a simple non-separating geodesic, obtaining a hyperbolic surface $S$ with two boundary geodesics, and then doubling along this geodesics. The group $\Gamma_0$ acts ergodically on $\mathbb{S}^1$ because bounded harmonic functions on $\Z^d$  are constant, but $\Gamma_1$ does not act ergodically because due to the transience of the random walk on $\ZZ^d$, a Brownian in  path in $\HH^2/\Gamma_1$ will eventually enter one of the two copies of $S$ and never come back to the other copy. See \cite{lyons-sullivan,uludag} for more details.

Consequently, if $\Gamma$ is a torsion-free uniform lattice in $\SL_2(\R)$, then the commutator subgroup $\Gamma_0 \coloneqq [\Gamma,\Gamma]$ acts ergodically but not strongly ergodically on $\mathbb S^1$.
\end{example}

\subsection{Boundary Map}

We will make use of the following lemma, attributed to Furstenberg.  See \cite{Zimmer}, Corollary 4.3.7 and Proposition 4.3.9 for a proof. For completeness, we provide a different proof using the existence of conditional measures:

\begin{lemma}\label{map}  Suppose that $\Gamma < G$ is a discrete subgroup of a semi-simple Lie group $G$, and suppose $\Gamma$ acts continuously on a compact metric space $X$, then there exists a Borel measurable map $\phi: \mathcal{F} \to \text{Prob}(X)$ which is $\Gamma$-equivariant.
\end{lemma}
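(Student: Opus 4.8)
The plan is to construct the equivariant map $\phi:\mathcal F\to\mathrm{Prob}(X)$ by exploiting the amenability of the minimal parabolic $B$ together with a compactness (weak-$*$) argument, and then to identify the resulting measures fiberwise via conditional measures, which is the ``different proof'' the excerpt advertises.

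\textbf{Step 1: A fixed point over $B$.} Consider the space $L^\infty_w(G,\mathrm{Prob}(X))^G$ of (weak-$*$ measurable, essentially bounded) $G$-equivariant maps $G\to\mathrm{Prob}(X)$; concretely, a $\Gamma$-equivariant map $\mathcal F=G/B\to\mathrm{Prob}(X)$ is the same thing as a $\Gamma$-equivariant map $G/B\to\mathrm{Prob}(X)$, and pulling back along $G\to G/B$, the same as a $B$-invariant, $\Gamma$-equivariant map $G\to\mathrm{Prob}(X)$. First I would forget $\Gamma$ for a moment: since $X$ is compact metric, $\mathrm{Prob}(X)$ is a compact convex metrizable subset of $C(X)^*$ with the weak-$*$ topology, on which $\Gamma$ (being a group of homeomorphisms of $X$) acts by affine homeomorphisms. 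Start with \emph{any} $\Gamma$-equivariant weak-$*$ measurable map $\psi:G\to\mathrm{Prob}(X)$ — e.g. $\psi(g)$ constant equal to some $\Gamma$-quasi-invariant probability measure is \emph{not} equivariant, so instead take $\psi(g)=g_*\mu$ for a fixed $\mu\in\mathrm{Prob}(X)$ only after restricting to a section; cleanest is to use that $\Gamma\backslash G$ carries a finite(-on-compacta) measure and push a measure around. Rather than fuss over this, the robust route is: the map $g\mapsto \delta_{gx_0}$ for a basepoint makes sense if $\Gamma$ acts on $X$, but we want $B$-invariance, so average over $B$.

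\textbf{Step 2: Averaging over the amenable group $B$.} This is the heart of the matter. Because $B$ is amenable, it admits a left-invariant mean, and one can use the standard machinery (as in Zimmer, Prop.~4.3.9): the fibered space $G\times_B \mathrm{Prob}(X)\to G/B$, with $\Gamma$ acting on the total space and on the base, has a measurable $\Gamma$-section precisely because the fibers $\mathrm{Prob}(X)$ are compact convex and $B$ is amenable — amenability of $B$ gives a $B$-fixed point in the space of measurable maps from a point's $G$-orbit structure to $\mathrm{Prob}(X)$, and then a measurable selection theorem produces $\phi$. Concretely I would: (a) fix a Borel section $s:G/B\to G$ (von Neumann); (b) on each fiber form the convex compact set of $B$-averages of the orbit map; (c) invoke a measurable selection (Kuratowski–Ryll-Nardzewski) to get a Borel $\phi_0:G/B\to\mathrm{Prob}(X)$; (d) correct $\phi_0$ to be genuinely $\Gamma$-equivariant by one more averaging — here over $\Gamma$ this is impossible, so instead one argues that the $B$-averaging was already compatible with the $\Gamma$-action since $\Gamma$ normalizes nothing, and the right statement is that the \emph{Poisson boundary} $(\mathcal F,\nu)$ with its $\Gamma$-stationary measure is a $\Gamma$-boundary, and any action of $\Gamma$ on a compact metric space admits a stationary measure $\lambda$, whose disintegration over the boundary under the Furstenberg correspondence \emph{is} the desired $\phi$.

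\textbf{Step 3: Conditional/disintegration argument (the promised alternative).} The self-contained version: pick a $\Gamma$-stationary probability $\lambda$ on $X$ (exists by Kakutani fixed point, since $\mathrm{Prob}(X)$ is compact convex and the random-walk averaging operator is affine continuous). By the Furstenberg–Poisson machinery, $(\mathcal F,\nu)$ is the Poisson boundary of the random walk, so $\lambda=\int_{\mathcal F}\phi(\xi)\,d\nu(\xi)$ where $\phi(\xi):=\lim_{n} (g_1\cdots g_n)_*\lambda$ exists $\nu$-a.e. as a weak-$*$ limit (martingale convergence for the $C(X)$-valued martingale $\xi\mapsto \mathbb E[(\text{partial product})_*\lambda\mid \mathcal F_n]$), and the cocycle identity for the walk forces $\phi(\gamma\xi)=\gamma_*\phi(\xi)$ for $\gamma\in\Gamma$, $\nu$-a.e. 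Finally, since $\nu$ is in the Lebesgue class on $\mathcal F$, ``$\nu$-a.e.'' is the same as ``Lebesgue-a.e.,'' giving the stated conclusion. \textbf{The main obstacle} is getting the equivariance to hold \emph{everywhere on a conull set simultaneously for all $\gamma\in\Gamma$}: one proves it for each fixed $\gamma$ on a $\gamma$-dependent conull set, then takes the countable intersection over $\gamma\in\Gamma$ (here discreteness, hence countability, of $\Gamma$ is essential), and modifies $\phi$ on the resulting null set to be equivariant on the nose — or simply accepts the a.e.-equivariant version, which is all the lemma claims. I expect the write-up to spend most of its effort on justifying the weak-$*$ martingale convergence and the measurability of $\phi$, with amenability of $B$ entering only through the identification of $\mathcal F$ as the Poisson boundary.
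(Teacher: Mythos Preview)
Your Steps 1 and 2 are, by your own account, abandoned sketches: Step 1 never produces a candidate map, and Step 2 collapses into a citation of Zimmer's Prop.~4.3.9 (the very result the paper is offering an alternative proof of) before you pivot away from it.

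The substantive content is Step 3, and it has a real gap. You assert that ``$(\mathcal F,\nu)$ is the Poisson boundary of the random walk,'' and that amenability of $B$ is what buys this. But amenability of $B$ makes $G/B$ the Poisson boundary of a suitable random walk \emph{on $G$}, not on $\Gamma$. For your disintegration $\lambda=\int_{\mathcal F}\phi(\xi)\,d\nu(\xi)$ of a $\Gamma$-stationary measure to make sense, you need a probability measure $\mu$ on $\Gamma$ such that the Lebesgue-class measure $\nu$ on $G/B$ is $\mu$-stationary and $(G/B,\nu)$ is a $(\Gamma,\mu)$-boundary. Furstenberg proved this when $\Gamma$ is a \emph{lattice}; the argument uses finite covolume in an essential way. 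The lemma here is stated for an arbitrary discrete subgroup $\Gamma<G$, and in that generality no such $\mu$ is known (or expected) to exist. So the martingale-convergence / Furstenberg-correspondence machinery simply does not apply.

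The paper's proof avoids random walks altogether. It works on the suspension $Y=(G\times X)/\Gamma$, takes any Radon measure on $Y$ projecting to Haar on $G/\Gamma$, and F{\o}lner-averages it over the amenable group $B$ (acting via the $G$-action on $Y$) to obtain a $B$-invariant limit. Lifting to $G\times X$ and disintegrating along the $X$-fibers yields conditional measures $g\mapsto m_g\in\mathrm{Prob}(X)$; the $B$-invariance forces this assignment to descend to $G/B$, and the $\Gamma$-equivariance $m_{g\gamma^{-1}}=\gamma_*m_g$ is automatic from the suspension construction. This uses only amenability of $B$ and discreteness (hence countability) of $\Gamma$, with no lattice hypothesis, and is the ``conditional measures'' argument the lemma's preamble promises.
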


\begin{proof}

Consider the suspension space $Y := (G \times X)/\Gamma$, obtained from the action on $X$, where action of $\Gamma$ is via $\gamma . (g,x) = (g\gamma^{-1}, \gamma x) $ . $G$ acts by left multiplication on $Y$.

We can construct a Radon measure $\mu$ that projects to Haar measure in $G/\Gamma$, and which is $B$-invariant (recall that $\mathcal{F} = G/B$ and $B$ is amenable). 

Consider the space $M_Y$ of Radon measures on $Y$ that project to Haar measure on $G/\Gamma$, one can show that $M_Y$ is non-empty. Consider a Folner sequence $F_n$ of $B$, and take $\mu_0 \in M_Y$, and consider the averages:

$$\mu_n := \frac{1}{m_{B}(F_N)}\int_{F_n} p_{*} \mu_0 \ dp.$$

Take a sub-sequential limit $\mu$ of $\{\mu_n\}$. Such limit $\mu$ exists and must be $B$-invariant. We can lift $\mu$ to a measure $\hat{\mu}$ on $G \times X $ that projects to Haar measure, and which is $\Gamma$-invariant, moreover such measure is also $B$-invariant, for the right $B$-action. The existence of conditional measures with respect to the $X$-foliation implies that for Haar a.e. $g \in G$ there is a probability measure $m_g$ on $X$, and from the invariance conditions we have for Haar $a.e.$ $ g \in G$, and every $p \in B$, that $m_{pg}= m_g$, and $m_{g\gamma^{-1}} = \gamma_{*} m_{g}$. 

Therefore the assignment $g \to m_{g}$, induces a map $\phi: G/B \to \Prob(X)$ which is $\Gamma$-equivariant. 
\end{proof}

\subsection{Algebraic and integral traces}\label{algebraicandintegraltraces}

Let $G$ be a real connected semi-simple Lie group without compact factors and finite center, and let $\Gamma$ be a subgroup of $G$. Let $Ad: G \to {\rm End}(\mathfrak{g})$ be the adjoint representation. 

\begin{definition}
We say that $\Gamma$ has {\em algebraic traces} (respectively, {\em integral traces}) if for any $\gamma \in \Gamma$, the trace of $\Ad(\gamma)$ is an algebraic number (respectively, algebraic integer). We let $k_\Gamma$ denote the field generated by all such traces, and refer to $k_\Gamma$ as the \emph{adjoint trace field}.
\end{definition}

If the group $\Gamma$ is finitely generated, the adjoint trace field $k_{\Gamma}$ is finitely generated. If $\Gamma$ has algebraic traces, $[k_{\Gamma}: \Q] < \infty$.

The following is well known to experts, but we were not able to find a proof in the literature.

\begin{theorem}\label{algebraictraces} Let $G$ be a semi-simple real algebraic group with trivial center, and $\Gamma < G$ be a Zariski dense subgroup. The following are equivalent:
\begin{enumerate}[label=(\roman*)]
\item\label{1} $\Gamma$ has algebraic traces (respectively, integral traces).
\item\label{2} For every faithful representation $\rho: G \to \GL(V)$ of $G$ such that for every $\gamma \in \Gamma$, $\rho(\gamma)$ has algebraic  traces (respectively, integral traces).
\item\label{3} For every faithful representation $\rho: G \to \GL(V)$ of $G$ and every $\gamma \in \Gamma$, the eigenvalues of $\rho(\gamma)$ are algebraic numbers  (respectively, algebraic integers).
\item \label{4} Every faithful representation $\rho: G \to \GL_n(\CC)$ can be conjugated by an element of $\GL_n(\CC)$ so that $\rho(\Gamma) \subset \GL_n(k)$ (respectively, $\GL_n(O_k)$) for some number field $k$. 
\end{enumerate}
\end{theorem}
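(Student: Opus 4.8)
The plan is to prove the cycle of implications (iv) $\Rightarrow$ (iii) $\Rightarrow$ (ii) $\Rightarrow$ (i) $\Rightarrow$ (iv), of which the first three are immediate and the last is the substance. Indeed, conjugation does not change eigenvalues, so (iv) forces the characteristic polynomial of each $\rho(\gamma)$ into $k[t]$ (resp.\ $\mathcal{O}_k[t]$), whence its roots are algebraic (resp.\ algebraic integers), which is (iii); and (iii) $\Rightarrow$ (ii) since a trace is a sum of eigenvalues; and (ii) $\Rightarrow$ (i) since $\Ad$ is itself a faithful representation, $Z(G)$ being trivial. So everything reduces to (i) $\Rightarrow$ (iv), which I would split into an eigenvalue statement (Step 1) and a Burnside-type rationality argument (Step 2).

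\emph{Step 1.} I claim that if $\Gamma$ has algebraic (resp.\ integral) traces, then for \emph{every} algebraic representation $\rho\colon G\to\GL(V)$ and every $\gamma\in\Gamma$, all eigenvalues of $\rho(\gamma)$ are algebraic numbers (resp.\ algebraic units). Using the Jordan decomposition $\gamma=\gamma_s\gamma_u$ in $G$, the eigenvalues of $\rho(\gamma)$ are those of $\rho(\gamma_s)$, namely the values $\nu(\gamma_s)$ with $\nu$ ranging over the weights of $\rho$. Since $Z(G)=1$, $G$ is of adjoint type, so each $\nu$ lies in the root lattice; writing $\nu=\sum_i n_i\alpha_i$ in simple roots gives $\nu(\gamma_s)=\prod_i\alpha_i(\gamma_s)^{n_i}$, and $\alpha_i(\gamma_s)$ together with $\alpha_i(\gamma_s)^{-1}=(-\alpha_i)(\gamma_s)$ are eigenvalues of $\Ad(\gamma)$. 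Thus it suffices to treat the eigenvalues of $\Ad(\gamma)$. For the algebraic statement, Newton's identities express the coefficients of the characteristic polynomial of $\Ad(\gamma)$ as $\QQ$-polynomials in the power sums $\tr\,\Ad(\gamma^m)$, which lie in the algebraic trace field $k_\Gamma$. For integrality, observe that $-\frac{d}{dt}\log\det(I-t\,\Ad(\gamma))=\sum_{m\ge1}\tr(\Ad(\gamma^m))\,t^{m-1}$ has coefficients in $\mathcal{O}_{k_\Gamma}$, and since $\det(I-t\,\Ad(\gamma))$ is a polynomial over $k_\Gamma$ with constant term $1$, this series is a rational function whose reduced denominator equals $\prod_\mu(1-\mu t)$, with $\mu$ running over the distinct eigenvalues of $\Ad(\gamma)$; by Fatou's lemma on integrality of rational power series (applied prime by prime over the Dedekind ring $\mathcal{O}_{k_\Gamma}$, where it is Gauss's lemma over a DVR) this denominator lies in $\mathcal{O}_{k_\Gamma}[t]$, so the distinct eigenvalues of $\Ad(\gamma)$ are roots of a monic $\mathcal{O}_{k_\Gamma}$-polynomial, hence algebraic integers; being closed under inversion (the nonzero weights of $\Ad$ are the roots, and $\Phi=-\Phi$), they are in fact algebraic units. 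Then each $\nu(\gamma_s)$ above is a product of algebraic units, so an algebraic unit, which proves Step 1.

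\emph{Step 2.} Every algebraic representation of the reductive group $G$ is semisimple, so after conjugating we may take $\rho=\bigoplus_j m_j\rho_j$ with the $\rho_j$ pairwise non-isomorphic irreducibles; since tensoring a block by an identity matrix does not affect membership in $\GL(\mathcal{O})$, it is enough to treat one irreducible block $\rho_j$, of dimension $d$ say. By Zariski density, the $\CC$-span of $\rho_j(\Gamma)$ coincides with that of $\rho_j(G)$, which is all of $M_d(\CC)$ by Burnside; choose $\delta_1,\dots,\delta_{d^2}\in\Gamma$ with $\rho_j(\delta_i)$ a basis. For $\gamma\in\Gamma$ write $\rho_j(\gamma)=\sum_i c_i\rho_j(\delta_i)$ and pair with $X\mapsto\tr(X\rho_j(\delta_l))$: the resulting system has Gram matrix $T=(\tr\,\rho_j(\delta_i\delta_l))$, invertible over $\CC$, and right-hand side $b=(\tr\,\rho_j(\gamma\delta_l))_l$, and by Step 1 all entries of $T$ and $b$ are algebraic (resp.\ algebraic integers). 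Hence $c=T^{-1}b$ is algebraic, so $\rho_j(\Gamma)$ lies in the $\overline{\QQ}$-algebra $A$ spanned by the $\rho_j(\delta_i)$, which is central simple over $\overline{\QQ}$ with $A\otimes_{\overline{\QQ}}\CC=M_d(\CC)$, hence $A\cong M_d(\overline{\QQ})$; conjugating by a $P\in\GL_d(\CC)$ inducing this isomorphism puts $\rho_j(\Gamma)$ into $\GL_d(\overline{\QQ})$, and as $\Gamma$ is finitely generated the resulting matrices have entries in a number field $k$, proving the algebraic half of (iv). For the integral half, we now have $\rho_j(\Gamma)\subset\GL_d(k)$ with every $\rho_j(\gamma)$ of characteristic polynomial in $\mathcal{O}_k[t]$; since the fixed element $\det T$ lies in $\mathcal{O}_k$ and $c_i\in(\det T)^{-1}\mathcal{O}_k$ for all $\gamma$, the $\mathcal{O}_k$-span $\Lambda$ of $\rho_j(\Gamma)$ is contained in the finitely generated module $(\det T)^{-1}\sum_i\mathcal{O}_k\,\rho_j(\delta_i)$, and, being a subring with $1$ that spans $M_d(k)$, $\Lambda$ is an $\mathcal{O}_k$-order. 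For any full $\mathcal{O}_k$-lattice $L_0\subset k^d$, the lattice $L:=\Lambda L_0$ is full and $\rho_j(\Gamma)$-stable; as a finitely generated torsion-free module over the Dedekind ring $\mathcal{O}_k$ it is $\cong\mathcal{O}_k^{d-1}\oplus\mathfrak{a}$, and over the Hilbert class field $H$ of $k$ the ideal $\mathfrak{a}\mathcal{O}_H$ becomes principal, so $L\otimes_{\mathcal{O}_k}\mathcal{O}_H$ is free of rank $d$; since $\rho_j(\Gamma)$ preserves it, $\rho_j(\Gamma)$ conjugates into $\GL_d(\mathcal{O}_H)$. Reassembling the blocks over the compositum $k'$ of the fields obtained for the various $\rho_j$ gives $\rho(\Gamma)\subset\GL_n(\mathcal{O}_{k'})$, completing the cycle.

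\emph{Main obstacle.} I expect two points to require real care. The first is the integrality half of Step 1: upgrading ``every $\tr\,\Ad(\gamma^m)$ is an algebraic integer'' to ``every eigenvalue of $\Ad(\gamma)$ is an algebraic integer'', where Fatou's lemma is precisely what clears the denominators Newton's identities would otherwise introduce; this step is short but is the crux. The genuinely most technical part is the integral case of Step 2, where a possibly nontrivial ideal class group of $k_\Gamma$ obstructs conjugating directly into $\GL_n(\mathcal{O}_{k_\Gamma})$ and forces the passage to the Hilbert class field. (One should also note that finite generation of $\Gamma$ is used precisely to make the field in (iv) an honest number field; without it $k_\Gamma$ can be an infinite algebraic extension of $\QQ$ and (iv) must be read with ``$k$ algebraic over $\QQ$''.)
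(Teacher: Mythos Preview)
Your argument is correct and takes a genuinely different route from the paper's. The paper proves the equivalence essentially by citation: for (i)$\Leftrightarrow$(iii) it invokes the fact (attributed to Deligne) that every irreducible representation of $G$ occurs as a summand of iterated tensor products of a fixed faithful representation and its dual, so eigenvalues in any representation are monomials in eigenvalues of $\Ad$; for (iv) it invokes Vinberg's theorem that a Zariski dense subgroup can be conjugated into $\mathbf{G}(k_\Gamma)$. The integral case is not argued at all beyond the remark that it ``follows along the same lines with minor modifications.''

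Your approach replaces both black boxes with explicit arguments. In Step~1 you bypass the tensor-product machinery by using directly that $G$ is adjoint, so every weight lies in the root lattice and hence every eigenvalue of $\rho(\gamma)$ is a Laurent monomial in the $\alpha_i(\gamma_s)$; the Fatou--Gauss argument then gives the integral case that the paper omits, and the observation that the root eigenvalues are \emph{units} is exactly what makes negative exponents harmless. In Step~2 you replace the Vinberg citation with a concrete Burnside/trace-form computation, and for the integral half you carry out the order-and-lattice construction, passing to the Hilbert class field to trivialize the ideal class obstruction. The trade-off is clear: the paper's proof is a few lines because it outsources the substance, whereas yours is self-contained and in particular makes the integral case honest. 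Your closing remark on finite generation is also well taken; the paper's statement of (iv) tacitly assumes it.
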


\begin{proof}

We prove only the facts for algebraic traces; the facts about integrality follows along the same lines with minor modifications. 

The equivalence between items \ref{2} and \ref{3} follows for a given representation $\rho: \Gamma \to \GL(V)$, because for every $\gamma \in \Gamma$, the coefficients of the characteristic polynomial of $\rho(\gamma)$ can be expressed via Newton's identities (for symmetric polynomals) as a polynomial with rational coefficients on the traces of powers ${\rm tr}\, \rho(\gamma^k)$, and so algebraic traces imply algebraic eigenvalues.

Item \ref{1} is equivalent to \ref{3}, because of the following fact from representation theory: if $\rho_0: G \to \GL(V)$ is an irreducible faithful representation, then any irreducible representation of $G$ appears as a direct summand of the representations obtained by (iterated) tensor products of $\rho_0$ and its dual; see \cite[Ch. I, Prop 3.1]{deligne-hodge-cycles}. Using this fact, and the fact that the eigenvalues of tensor product of representations can be expressed polynomially in eigenvalues of the representations involved, we have that if the elements in the image of $\Gamma$ under the adjoint representation (or any faithful representation) have algebraic eigenvalues, then the same must be true for any faithful irreducible representation.

Item \ref{4} follows because if $\Ad(\Gamma)$ is Zariski dense in $\Ad(G)$, one can construct a faithful representation $\rho: G \to \GL(V)$ where the entries of $\rho(\Gamma)$ (for a well chosen basis of $V$) lie in $k_{\Gamma}$ (This fact appears in the work of Vinberg, see \cite{Vinberg1974Field}; for a proof see Mostow's \cite[section 2.5]{mostow1980remarkable}) and this prove item \ref{4} for $\rho$. By taking tensor product of $\rho$ and its dual and arguing as before, we can suppose that the same is true for any irreducible representation.
\end{proof}








\end{subsection}

\end{section}

\section{Strong ergodicity and boundedness}\label{sec: mainlemma}

The following result will be important in our proofs of the main statements in the introduction. The ideas in its proof come from the work of Chatterji-Venkataramana \cite{chatterji2009discrete}.

\begin{theorem}\label{thm:bounded}
    Let $G$ be a semi-simple real Lie group with finite center. Let $\Gamma$ be a discrete subgroup of $G$ and let $\Sigma \subset \Gamma$ be a subgroup. 
    Let $k$ be a non-Archimedean local field.
    Suppose that 
    \[
     \rho: \Gamma \to {\rm GL}_n(k)
    \]
    is a homomorphism such that:
    \begin{enumerate}[label=(\roman*)]
    \item  $\rho(\Sigma)$ is bounded.
    \item  $\Sigma$ acts strongly ergodically on the Furstenberg boundary $G/B$.
    \item The Zariski closure $H$ of $\rho(\Gamma)$ in ${\rm GL}_n(k)$ is connected and semi-simple.
    \end{enumerate}
    Then $\rho(\Gamma)$ is bounded.
\end{theorem}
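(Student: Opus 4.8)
The plan is to show that $\rho(\Gamma)$ fixes a point of the Bruhat–Tits building of $H$. Since $k$ is a non-Archimedean local field and $H$ is connected semisimple, this building $\mathcal X = \mathcal X(H,k)$ is a locally finite, complete $\mathrm{CAT}(0)$ space on which $H(k)$ acts isometrically with compact open point-stabilizers. For a subgroup $\Lambda \le H(k)$ the following are equivalent: $\Lambda$ is bounded; $\Lambda$ fixes a point of $\mathcal X$; $\Lambda$ preserves a Borel probability measure on $\mathcal X$. (A bounded group has a bounded orbit, whose circumcenter it fixes, by the Bruhat–Tits fixed point theorem; a group fixing a point lies in a compact stabilizer; a group preserving a probability measure $\mu$ on the proper space $\mathcal X$ has bounded orbits, since choosing $R$ with $\mu(B(x_0,R)) > 1/2$ forces $d(x_0,gx_0) < 2R$ for all $g$; and a compact group fixes the circumcenter of any orbit.) In particular, since $\rho(\Sigma)$ is bounded, its closure $L := \overline{\rho(\Sigma)}$ is a compact subgroup of $H(k)$.

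I would then produce a boundary map. Let $\overline{\mathcal X} = \mathcal X \sqcup \partial_\infty\mathcal X$ be the visual bordification; since $\mathcal X$ is locally finite (hence separable) this is a compact metrizable space on which $H(k)$, and therefore $\Gamma$ via $\rho$, acts by homeomorphisms. Lemma~\ref{map} supplies a $\Gamma$-equivariant Borel map $\phi\colon \mathcal F \to \Prob(\overline{\mathcal X})$. Here is the essential use of the hypotheses on $\Sigma$: because $\rho(\Sigma) \subseteq L$ with $L$ compact, the composite $\mathcal F \to \Prob(\overline{\mathcal X})/L$, $\xi \mapsto L\cdot\phi(\xi)$, is $\Sigma$-invariant; the quotient $\Prob(\overline{\mathcal X})/L$ is again a compact metric space, so by ergodicity of $\Sigma$ on $\mathcal F$ this map is essentially constant. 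Hence $\phi(\xi)$ lies a.e.\ in a single compact $L$-orbit $L\mu_0 \subseteq \Prob(\overline{\mathcal X})$. Feeding this into the $\Gamma$-equivariance of $\phi$ (and using that each $\rho(\gamma)$ preserves the measure class on $\mathcal F$) shows that, for every $\gamma \in \Gamma$, $\rho(\gamma)\,(L\mu_0)$ meets $L\mu_0$, i.e.
\[
\rho(\Gamma) \ \subseteq\ L\cdot S\cdot L, \qquad S := \Stab_{H(k)}(\mu_0).
\]
Moreover $\xi \mapsto \phi(\xi)(\mathcal X)$ is a $\Gamma$-invariant function, hence a.e.\ equal to a constant $c \in [0,1]$, and comparing with $\phi(\xi) \in L\mu_0$ gives $\mu_0(\mathcal X) = c$.

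It remains to show that $L\cdot S\cdot L$, equivalently $S$, is bounded. If $c > 0$ this is immediate: $S$ preserves $\mu_0$, hence the probability measure $c^{-1}\mu_0|_{\mathcal X}$ on the proper space $\mathcal X$, so $S$ has bounded orbits and is bounded. The substantive case, and the main obstacle, is $c = 0$: then $\mu_0$ is carried by $\partial_\infty\mathcal X$ and, after applying the $H(k)$-equivariant type map and decomposing $\mu_0$ by type, by the flag varieties $H(k)/Q(k)$ of $H$. Here one argues as in the proofs of Margulis superrigidity and in Chatterji–Venkataramana \cite{chatterji2009discrete}: one may assume $S$ is unbounded (otherwise we are done), and then $S$ cannot be Zariski dense in $H$, because a Zariski-dense unbounded subgroup of $H(k)$ contains an element acting proximally on the flag varieties of $H$ and hence, by Furstenberg's lemma, admits no invariant probability measure on them. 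One then analyzes the $\Gamma$-equivariant map $\xi \mapsto \phi(\xi)$, now valued in flag varieties of $H$: Zariski density of $\rho(\Gamma)$ forces it to be algebraic in the appropriate sense, and the incompatibility between the connected real group $G$ (with its boundary $\mathcal F$) and the totally disconnected group $H(k)$ ultimately forces $\rho(\Gamma)$ to be bounded. Strong ergodicity — rather than plain ergodicity — is what makes this last argument run: trivializing the finite-valued cocycles that arise in the Margulis-type analysis requires passing to a finite-index subgroup of $\Sigma$, and strong ergodicity guarantees that this subgroup still acts ergodically on $\mathcal F$. Extracting boundedness of $\rho(\Gamma)$ from the equivariant boundary map into the flag varieties is the part I expect to be hardest.
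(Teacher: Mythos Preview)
Your proposal has a real gap in the case $c=0$. You reach $\rho(\Gamma)\subseteq L\cdot S\cdot L$ with $S=\Stab_{H(k)}(\mu_0)$, and then argue that if $S$ is unbounded it cannot be Zariski dense. But $L\cdot S\cdot L$ is not a group, and $S$ failing to be Zariski dense does not by itself prevent $L\cdot S\cdot L$ from containing the Zariski-dense set $\rho(\Gamma)$; the ``Margulis-type analysis'' you invoke to close this is not supplied, and you yourself flag it as the hardest part. As written, the argument stops here.

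The key device you are missing---and the actual role of strong ergodicity---is much cleaner than a cocycle untwisting at the end. Because $H(k)$ is totally disconnected, the identity has a neighborhood basis of compact \emph{open} subgroups $C_m\to\{1\}$. Since $\rho(\Sigma)$ is bounded, each $\Sigma_m:=\Sigma\cap\rho^{-1}(C_m)$ has finite index in $\Sigma$, hence acts ergodically on $\mathcal F$ by strong ergodicity. Running your own orbit argument with $C_m$ in place of $L$ shows that $\phi$ lands a.e.\ in a single $C_m$-orbit for every $m$; since $C_m\to\{1\}$, $\phi$ is essentially \emph{constant}. Now $\rho(\Gamma)$ genuinely fixes a single $\mu_0$, not merely its $L$-orbit, and the set $L\cdot S\cdot L$ never appears.

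The paper then sidesteps your interior/boundary dichotomy altogether by taking the target to be $\Prob(\mathbb P^{n-1}(k))$ rather than $\Prob(\overline{\mathcal X})$, after first reducing (via complete reducibility of $H$ and Zariski density of finite-index subgroups of $\Gamma$) to the case where $\Gamma$ acts strongly irreducibly on $k^n$. The $C_m$-trick above yields a $\rho(\Gamma)$-invariant probability measure $\mu$ on $\mathbb P^{n-1}(k)$; Furstenberg's lemma then says that if $[\rho(\Gamma)]\subset\PGL_n(k)$ were unbounded, $\mu$ would be supported on $\mathbb P(V)\cup\mathbb P(W)$ for proper subspaces $V,W$, contradicting strong irreducibility. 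That is the whole endgame. Your building approach is natural, but this choice of target is precisely what turns the conclusion from an open-ended superrigidity argument into a one-line contradiction.
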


\begin{proof}
    We will partly follow the proof of Theorem 1 in \cite{Venkataramana1}.
    Since the Zariski closure $H$ of $\rho(\Gamma)$ in $\mathrm{GL}_n(k)$ is connected and semi-simple, Weyl’s theorem on complete reducibility implies that the finite-dimensional $H$-module $k^n$ decomposes as a direct sum of irreducible $H$-modules:
    \[
        k^n = V_1 \oplus \cdots \oplus V_p.
    \]
    Since $\Gamma \subset H$ is Zariski dense, this decomposition is also a decomposition of $\Gamma$-modules. Therefore, to prove that $\rho(\Gamma)$ is bounded, it suffices to treat the case where $k^n$ is irreducible under $\Gamma$ (and hence under $H$).

    Furthermore, the connectedness of $H$ implies that any finite-index subgroup $\Gamma_0 \le \Gamma$ is Zariski dense in $H$. It follows that $k^n$ remains irreducible under $\Gamma_0$, so the $\Gamma$-module $k^n$ is strongly irreducible, i.e., no finite-index subgroup preserves a proper nonzero subspace.

    Applying Lemma \ref{map} to $\Gamma \subset G$, we have a $\Gamma$-equivariant measurable map: $$\phi: G/B \to \Prob(\mathbb{P}^{n-1}(k)).$$ We show that $\phi$ is constant.
    
    By Zimmer \cite[Thm. 3.2.6]{Zimmer}, it follows that ${\rm GL}_n(k)$ acts smoothly on $\Prob(\mathbb{P}^{n-1}(k))$, i.e., every ${\rm GL}_n(k)$-orbit in $\Prob(\mathbb{P}^{n-1}(k))$ is locally closed. Thus, the orbit space
    \[
     \Prob(\mathbb{P}^{n-1}(k))/ {\rm GL}_n(k)
    \]
    is countably seperated.
    Let $$\bar\phi : G/B \to \Prob(\mathbb{P}^{n-1}(k))/ {\rm GL}_n(k)$$ be the composition of $\phi$ with the quotient map  $\Prob(\mathbb{P}^{n-1}(k)) \to \Prob(\mathbb{P}^{n-1}(k))/ {\rm GL}_n(k).$
    Since the codomain of $\bar\phi$ is countably separated and $\Gamma\curvearrowright  G/B$ is ergodic, $\bar\phi$ is essentially constant, i.e., on a co-null set in $G/B$, $\phi$ takes values in a single ${\rm GL}_n(k)$-orbit in $\Prob(\mathbb{P}^{n-1}(k))$. So, we can identify $\phi$ with a $\Gamma$-equivariant measurable map $\phi: G/B \to {\rm GL}_n(k)/L$, where $L\subset {\rm GL}_n(k)$ is a closed subgroup.

    Since ${\rm GL}_n(k)$ is a locally compact totally disconnected group, there exists a sequence of compact open subgroups $C_n \subset {\rm GL}_n(k)$ such that $C_n\to \{1\}$ in the topology of Hausdorff convergence. For $n\in\N$, let
    \[
     \Sigma_n \coloneqq \Sigma \cap C_n.
    \]
    Since the image of $\rho(\Sigma)$ is bounded, $\Sigma_n\subset \Sigma$ is of finite index and thus acts ergodically on $G/B$ by our hypothesis. Since $C_n$ is compact, it acts smoothly on ${\rm GL}_n(k)/L$ and thus by a similar argument as above, the image of $\phi$ lies in a single $C_n$-orbit. Since $C_n\to \{1\}$, it follows that the image of $\phi$ is constant.

    So, $\Gamma$ preserves a probability measure $\mu$ on $\mathbb{P}^{n-1}(k)$.

    Suppose now, to the contrary, that $\rho(\Gamma)$ is unbounded in $H\subset {\rm GL}_n(k)$. Let
    \[
     [\,\cdot\,]: {\rm GL}_n(k) \to {\rm PGL}_n(k)
    \]
    be the canonical map. Since $H$ is semi-simple, it follows that $[\rho(\Gamma)]$ is unbounded.
    Since $\mu$ is invariant, by Furstenberg's lemma (see \cite[Lem. 3.2.1]{Zimmer}), there exist subspaces
    $V,W\subset k^n$ with $1< \dim V,\dim W <n$ such that $\mu$ is supported on $\mathbb{P}(V)\cup \mathbb{P}(W)$. Thus, $\Gamma$ preserves $V\cup W$, which contradicts strong irreducibility of $k^n$.
\end{proof}

\section{Proof of Theorem \ref{main}}

We will restrict to  the case where $G$ is connected and has trivial center. In this case \cite[Prop. 3.1.6]{Zimmer}, there exists a connected semi-simple algebraic group $\mathbf{G}$ defined over $\Q$ such that $G \cong \mathbf{G}(\R)^0$ (see \cite[Prop. 3.1.6]{Zimmer}).

We will split the proof of Theorem \ref{main} into three Theorems. We will first consider the fact about algebraic and integral traces.

\subsection{Restriction on traces}

\begin{theorem}\label{thm:trace}
Let $\Gamma < G = \bG(\R)^0$ be a finitely generated Zariski dense discrete subgroup of $G$ such that there exists a subgroup $\Sigma\subset \Gamma$ such that $$\Sigma \subset \bG(\bar\Q)$$ and that $\Sigma$ acts strongly ergodically on the Furstenberg boundary $G/B$.
Then, $\Gamma$ has algebraic traces.
\end{theorem}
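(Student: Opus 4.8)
The plan is to reduce the statement about algebraic traces to the boundedness criterion of Theorem~\ref{thm:bounded}, applied one prime at a time. By Theorem~\ref{algebraictraces}, it suffices to show that for some (equivalently any) faithful representation $\rho\colon G\to\GL_n(\CC)$, the eigenvalues of $\rho(\gamma)$ are algebraic for every $\gamma\in\Gamma$; in fact it is enough to show that all matrix entries of $\rho(\Gamma)$ lie in a number field after a suitable conjugation. Since $\Gamma$ is finitely generated and Zariski dense in $\bG$, the entries of $\rho(\Gamma)$ generate a finitely generated field $k_\Gamma\subset\CC$. If $k_\Gamma$ had positive transcendence degree, we could choose a valuation, but the cleanest route is the classical one: suppose $t\in k_\Gamma$ is transcendental over $\QQ$; then there is an embedding of the finitely generated ring $\ZZ[\text{entries of }\rho(\Gamma),\ \text{entries of }\rho(\Gamma)^{-1}]$ into a non-Archimedean local field $k$ in which $t$ (hence some matrix entry) has nonzero valuation, so $\rho(\Gamma)$ becomes unbounded in $\GL_n(k)$. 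More precisely, I would produce for each prime of a suitable ring a place where $\Sigma$ stays bounded but $\Gamma$ might not, and derive a contradiction.

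Concretely, here are the steps. First, fix a faithful representation and a finite generating set of $\Gamma$; let $R$ be the subring of $\CC$ generated by all matrix entries of the generators and their inverses, so $R$ is a finitely generated domain and $\rho(\Gamma)\subset\GL_n(R)$. The hypothesis $\Sigma\subset\bG(\bar\QQ)$ together with Theorem~\ref{algebraictraces}\ref{4} lets us assume (after conjugating) that $\rho(\Sigma)\subset\GL_n(F)$ for a number field $F$, and in particular the entries of $\rho(\Sigma)$ are algebraic. Second, suppose for contradiction that some entry, or equivalently some eigenvalue of some $\rho(\gamma_0)$, is transcendental — then $\mathrm{Frac}(R)$ has positive transcendence degree over $\QQ$, so by a standard specialization argument (Cassels, or Tits' use of it in the Tits alternative) there is a homomorphism $R\to k$ into a non-Archimedean local field $k$ under which $\rho(\gamma_0)$ is sent to a matrix with an eigenvalue of absolute value $\neq 1$, hence $\rho(\Gamma)$ has unbounded image in $\GL_n(k)$. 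Third, this same embedding sends $\rho(\Sigma)$ — whose entries are algebraic, hence integral over $\ZZ$ after scaling — into a bounded subgroup of $\GL_n(k)$; one must be slightly careful here, choosing the place of $k$ to lie over a prime not dividing any denominator of the (finitely many) algebraic numbers appearing in $\rho(\Sigma)$'s generators, which is possible since only finitely many primes are excluded. Fourth, we are now exactly in the setting of Theorem~\ref{thm:bounded}: $\rho(\Sigma)$ is bounded, $\Sigma$ acts strongly ergodically on $G/B$, and — after passing to $\bG^{\mathrm{ad}}$ or composing with $\Ad$ — the Zariski closure of $\rho(\Gamma)$ is connected semisimple (using Zariski density of $\Gamma$ in $\bG$ and that $\bG$ is semisimple connected). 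Theorem~\ref{thm:bounded} then forces $\rho(\Gamma)$ bounded, contradicting the third step. Hence no eigenvalue is transcendental, so $\Gamma$ has algebraic traces.

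Two technical points need care. The first is the choice of the non-Archimedean place so that $\rho(\Sigma)$ remains bounded: one works inside the finitely generated domain $R_0 = \ZZ[\text{entries of }\rho(\Sigma)\text{'s generators, their inverses}]\subset \bar\QQ$, which is an order in a number field (or finite over such), and one picks a prime $\mathfrak{p}$ of its integral closure avoiding the finitely many primes dividing relevant denominators; then $\rho(\Sigma)\subset\GL_n(\mathcal{O}_{\mathfrak p})$, which is compact. Simultaneously, since the larger domain $R\supset R_0$ has a generator that is transcendental (or just non-integral) over this subring, one arranges that the extended valuation is nontrivial on $R$, giving unboundedness of $\rho(\Gamma)$. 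The second point is ensuring the Zariski closure hypothesis of Theorem~\ref{thm:bounded}: if the closure of $\rho(\Gamma)$ in $\GL_n(k)$ is not semisimple, one replaces $\rho$ by $\Ad$, or restricts to the derived/semisimple part, using that $\bG$ has trivial center and is connected semisimple so that $\rho(\Gamma)$ sits inside a semisimple algebraic group; since this only changes the representation and Theorem~\ref{algebraictraces} is representation-independent, no generality is lost.

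The main obstacle, I expect, is organizing the specialization argument so that the \emph{same} local field $k$ simultaneously witnesses boundedness of $\rho(\Sigma)$ and unboundedness of $\rho(\Gamma)$ — i.e.\ verifying that among the infinitely many non-Archimedean places arising from a transcendental (or non-integral algebraic) entry, one can always find one that is trivial on the algebraic-number entries of $\Sigma$. This is a finiteness-of-bad-primes argument, standard in spirit (as in the proof of the Tits alternative), but it is the step where the hypothesis ``$\Sigma$ has algebraic entries'' is genuinely used, and it must be stated precisely rather than waved at.
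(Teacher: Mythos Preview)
Your approach is correct and reaches the same endpoint as the paper---both reduce to Theorem~\ref{thm:bounded} by producing a homomorphism to $\GL_n$ over a non-Archimedean local field in which $\Sigma$ is bounded but $\Gamma$ is not---but the mechanism for producing that homomorphism is genuinely different. The paper works in the representation variety $V=\{\varphi:\Gamma\to G\mid \varphi|_\Sigma=\mathrm{id}\}$, observes that $V$ is defined over the number field $k=\mathrm{Frac}(R)\cap\bar\QQ$, and uses density of $V(\bar\QQ)$ to deform the inclusion $\varphi_0$ to a nearby $\bar\QQ$-point $\varphi'$ whose trace at $\gamma_0$ is a \emph{prescribed} non-integral rational $m/p$ with $p\nmid d$; completing at $p$ then gives the desired place. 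You instead keep the original representation and directly embed the coefficient ring $R$ into a $p$-adic field $L_\nu$, sending the transcendental trace to an element of valuation $<0$ while fixing the subring $R\cap\bar\QQ\subset O_k[1/d]$; this is the Tits-alternative style argument, and it works because $L_\nu$ has uncountable transcendence degree over $k$, so one can choose the image of the transcendence basis with the first coordinate having $|\cdot|_\nu>1$. Your route is arguably more direct and avoids the representation-variety machinery; the paper's route gives explicit control over the trace value, which is not strictly needed here.

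Two small points to tighten. First, your invocation of Theorem~\ref{algebraictraces}\ref{4} to put $\rho(\Sigma)$ into $\GL_n(F)$ is misplaced: that theorem requires $\Sigma$ Zariski dense, which is not assumed. What you actually need (and what the paper uses) is that $R\cap\bar\QQ$ is contained in $O_k[1/d]$ for a number field $k$; this follows because $\mathrm{Frac}(R)\cap\bar\QQ$ is a number field (a fact needing citation---the paper uses \cite[Lem.~9.26.11]{stacks-project}) and $R$ is finitely generated. Since $\Sigma$ need not be finitely generated, you cannot speak of ``the finitely many algebraic numbers in $\rho(\Sigma)$'s generators''; rather, all entries of $\rho(\Sigma)$ automatically lie in this single ring $O_k[1/d]$. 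Second, you oscillate between ``embedding'' and ``specialization''; the embedding version is cleaner, since an injective ring map $R\hookrightarrow L_\nu$ automatically preserves Zariski density of $\rho(\Gamma)$ in $\bG$ (any $\bar L_\nu$-polynomial vanishing on the image decomposes over an $\bar F$-basis of $\bar L_\nu$ into polynomials vanishing on $\rho(\Gamma)$), so the connected-semisimple hypothesis of Theorem~\ref{thm:bounded} is immediate.
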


\proof


Since $\Gamma$ is finitely generated, there exists a finitely generated 
$\mathbb Z$–subalgebra $R\subset \mathbb R$ such that 
$\Gamma \subset \mathbf G(R)$. Let $F=\mathrm{Frac}(R)$ and define 
\[
k := F \cap \bar{\mathbb Q}.
\] 
By 
\cite[Lem.~9.26.11]{stacks-project}, $k$ is a number field.  
 The intersection $A := R \cap \bar{\mathbb Q}$ is a finitely 
generated $\mathbb Z$-algebra, hence there exists $d\in \Z$ such that 
$
A \subset   O_k\left[\frac{1}{d}\right],
$
where $ O_k$ is the ring of integers of $k$.  
Therefore
\begin{equation}\label{eqn:sigma}
    \Sigma \subset \bG\!\left( O_k\!\left[\tfrac{1}{d}\right] \right).
\end{equation}

Suppose, to the contrary, that there exists $\gamma_0\in \G$ such that $\operatorname{tr}(\Ad(\gamma_0))$ is not algebraic.
Let $\operatorname{Hom}(\Gamma,G)$ be the variety of homomorphisms from $\Gamma$ into $G$.
We consider its subvariety
\begin{equation}\label{eqn:V}
    V := \{ \varphi: \Gamma \to G \mid \, \forall\gamma\in\Sigma, \, \varphi(\gamma) = \gamma \}.
\end{equation}

To see that $V$ is defined over $k$,
fix a finite set of generators $s_1,\dots,s_n$ of $\Gamma$ and a presentation $\Gamma = \<s_1,\dots,s_n \;\vert\; R_i,\, i\in I\>$.
Then, we may realize
\[
V\subset G^{n+1}
\]
as the zero set of the polynomials in variables $(s_1,\dots,s_n)$ given by:
\begin{enumerate}[label=(\roman*)]
    \item The relations $R_i,\, i\in I$ of $\Gamma$, and 
    \item A countable set of equations determined for each $\gamma \in \Sigma$ (written as a product of the $s_i$'s).
\end{enumerate}
The polynomials in the first category have $\mathbb{Z}$-coefficients whereas the second ones have coefficients in $k$.

We equip $V$ with the euclidean topology.

\begin{lemma}\label{lem1}
    $V$ has positive dimension, $V(\R\cap \bar\Q)$ is dense in $V$, and $$\varphi_0\coloneqq  \operatorname{id}_\Gamma \in V$$ is not an isolated point.
\end{lemma}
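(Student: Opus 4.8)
The plan is to prove all three assertions at once, by contradiction, using the standing hypothesis that $\tr(\Ad(\gamma_0))$ is transcendental. The structural input I would use is the one just established: $V$ is the real locus of a subvariety $\mathbf V\subseteq \mathbf G^{n+1}$ defined over the number field $k$ (more precisely $V=\mathbf V(\mathbb R)\cap(\mathbf G(\mathbb R)^0)^{n+1}$, an open subset of $\mathbf V(\mathbb R)$), and under the standard coordinates on $\operatorname{Hom}(\Gamma,G)$ the point $\varphi_0=\operatorname{id}_\Gamma$ corresponds to the tuple of the generators $s_1,\dots,s_n$ (together with $\gamma_0$), all of which lie in $\mathbf G(F)\subseteq\mathbf G(\mathbb R)$; in particular $\varphi_0\in\mathbf V(F)\subseteq\mathbf V(\mathbb R)$.

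The one extra ingredient I need is a density statement: for any affine variety $\mathbf W$ defined over $k$, the set $\mathbf W(\bar{\mathbb Q}\cap\mathbb R)$ of real algebraic points is dense in $\mathbf W(\mathbb R)$ for the euclidean topology. I would prove this by transfer. The field $\mathbb R_{\mathrm{alg}}:=\bar{\mathbb Q}\cap\mathbb R$ is real closed and dense in $\mathbb R$ as an ordered field; by Tarski's theorem the theory of real closed fields is model complete, so the inclusion $\mathbb R_{\mathrm{alg}}\prec\mathbb R$ is elementary. Given $p\in\mathbf W(\mathbb R)$ and a box $B$ with rational endpoints around $p$, the first-order sentence asserting existence of a point of $B$ satisfying the defining equations of $\mathbf W$ (whose coefficients lie in $k\subseteq\mathbb R_{\mathrm{alg}}$) holds in $\mathbb R$, hence in $\mathbb R_{\mathrm{alg}}$, producing a point of $\mathbf W(\mathbb R_{\mathrm{alg}})$ inside $B$. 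Applying this to $\mathbf W=\mathbf V$ and intersecting with the open subset $V$ (a dense set meets every open subset densely) yields that $V(\bar{\mathbb Q}\cap\mathbb R)$ is dense in $V$, which is the second assertion.

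For the remaining assertions, suppose $\varphi_0$ were an isolated point of $V$. Then $\{\varphi_0\}$ is open in $V$, so by the density just proven $\varphi_0\in V(\bar{\mathbb Q}\cap\mathbb R)\subseteq\mathbf V(\bar{\mathbb Q})$; in particular $\gamma_0\in\mathbf G(\bar{\mathbb Q})$ (and indeed $\Gamma\subseteq\mathbf G(\bar{\mathbb Q})$), whence $\Ad(\gamma_0)\in\GL(\mathfrak g)(\bar{\mathbb Q})$ and $\tr(\Ad(\gamma_0))$ is algebraic, contradicting the hypothesis. Hence $\varphi_0$ is not isolated in $V$; in particular $V(\mathbb R)$ is infinite, so $\dim V\ge 1$. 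The point requiring care — and the reason the contradiction is organized around "isolated" rather than around emptiness of $V(\bar{\mathbb Q})$ — is that the transcendence obstruction pins down $\varphi_0$ \emph{itself}, not merely some unspecified point of $V$: one needs a statement that localizes the algebraic points near a prescribed real point, and this local statement is false with $\mathbb Q$-points in place of $\bar{\mathbb Q}\cap\mathbb R$-points, which is exactly why working over the real closure is essential. If one wished to avoid the model-theoretic transfer, the density statement can instead be obtained by induction on $\dim\mathbf W$, using a generic $k$-linear projection that is étale on the smooth locus (its fibres over rational points being $\bar{\mathbb Q}$-rational) together with the inverse function theorem, and descending along the singular locus; that induction is the step I would expect to be the main technical cost of the alternative route.
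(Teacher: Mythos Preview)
Your proof is correct and follows essentially the same logic as the paper's: density of $V(\mathbb R\cap\bar{\mathbb Q})$ in $V$ plus $\varphi_0\notin V(\mathbb R\cap\bar{\mathbb Q})$ (from transcendence of $\tr(\Ad\gamma_0)$) forces $\varphi_0$ to be non-isolated, whence $V$ has positive dimension. The paper's proof is three lines and simply asserts the density as a known fact; you supply a clean justification via the elementary embedding $\mathbb R_{\mathrm{alg}}\prec\mathbb R$ (Tarski), which is exactly the right tool. One small remark: the first-order sentence you invoke must be finite, so implicitly you are using Noetherianity of $\mathbf G^{n+1}$ to replace the countable family of constraints $\varphi(\gamma)=\gamma$, $\gamma\in\Sigma$, by finitely many---worth one clause, but standard.
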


\begin{proof}
	Since $V$ is defined over  a number field $k\subset \R$, $V(\R\cap \bar\Q)$ is dense in $V$. 
	Moreover, since $\tr (\Ad(\gamma_0))\not\in \bar\Q$, we have  $\varphi_0\not\in V(\R\cap \bar\Q)$.
    The claim follows from this.
\end{proof}

Now, consider the trace function
\[
 {\rm tr}_{\g_0} : V \longrightarrow \R.
\]
given by ${\rm tr}_{\g_0}(\varphi) \coloneqq \tr(\Ad(\varphi(\gamma_0)))$. By \Cref{lem1}, there exists a sequence $\varphi_n'\in V$ converging to $\varphi_0$ such that $$\varphi_n'(\G) \subset \mathbf{G}({\R\cap \bar\Q}), \quad \text{for all } n\in\N.$$
Since ${\rm tr}_{\g_0}(\varphi_n')$ is algebraic for all $n$ but
${\rm tr}_{\g_0}(\varphi_0)$ is not, and
\[
 \lim_{n\to\infty} \tr_{\gamma_0} (\varphi_n') = {\rm tr}_{\g_0}(\varphi_0),
\]
it follows that the function ${\rm tr}_{\g_0} : V \to\R$ is non-constant near $\varphi_0\in V$.

Consequently, for all large enough prime numbers $p \in\Z$, there exists $\varphi_p \in V$ such that $\varphi_p\to\varphi_0$ as $p\to\infty$ and
\begin{equation}\label{eqn:trace_gamma}
  {\rm tr}_{\g_0}(\varphi_p) = \frac{m}{p},\quad
 \text{for some } m\in \Z\smallsetminus p\Z.
\end{equation}

Similar to \Cref{lem1}, we also have:

\begin{lemma}\label{lem2}
 $\{ \varphi \in V(\R\cap \bar\Q) :\ {\rm tr}_{\g_0}(\varphi) = \frac{m}{p}\}$ is dense in $V \cap {\rm tr}_{\g_0}^{-1} (\frac{m}{p})$.
\end{lemma}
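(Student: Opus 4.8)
The plan is to recognize the set in the statement as the set of real algebraic points of a subvariety of $V$ that is itself defined over the number field $k$, and then invoke exactly the same density principle that was already used for $V$ in the proof of \Cref{lem1}.

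Concretely, I would first observe that $\tr_{\gamma_0}\colon V\to\R$ is the restriction to $V$ of a morphism of $k$-varieties. Indeed, writing $\gamma_0 = w(s_1,\dots,s_n)$ as a word in the chosen generators, the map $\varphi\mapsto \tr\bigl(\Ad(\varphi(\gamma_0))\bigr)$ is the composition of the $\Q$-regular operations of multiplication and inversion on $\bG$, the adjoint representation $\Ad$, and the trace, so it is given by a polynomial with $\Q$-coefficients in the coordinates of $V\subset G^{n+1}$. Hence $\tr_{\gamma_0}^{-1}\bigl(\tfrac{m}{p}\bigr)$ is Zariski closed in $V$, and since $\tfrac{m}{p}\in\Q\subset k$ the subvariety
\[
 W \;:=\; V\cap \tr_{\gamma_0}^{-1}\!\bigl(\tfrac{m}{p}\bigr)
\]
is defined over $k$; moreover $W\neq\emptyset$, since the point $\varphi_p$ from \eqref{eqn:trace_gamma} lies in it.

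Next I would apply to $W$ precisely the input used for $V$ in \Cref{lem1}: a variety defined over a number field contained in $\R$ has its $\R\cap\bar\Q$-points dense, in the euclidean topology, in its set of real points. This gives that $W(\R\cap\bar\Q)$ is dense in $W(\R)=V\cap\tr_{\gamma_0}^{-1}(\tfrac{m}{p})$. To conclude, I would identify $W(\R\cap\bar\Q)$ with the set appearing in the statement: if $\varphi\in V(\R\cap\bar\Q)$, then $\tr_{\gamma_0}(\varphi)$ is the value of a $k$-rational polynomial at algebraic coordinates, hence lies in $\bar\Q$, and $\varphi\in W$ is by definition the condition $\tr_{\gamma_0}(\varphi)=\tfrac{m}{p}$; thus $W(\R\cap\bar\Q)=\{\varphi\in V(\R\cap\bar\Q):\tr_{\gamma_0}(\varphi)=\tfrac{m}{p}\}$, which is exactly the set whose density is asserted.

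I do not expect a genuine obstacle. The only point deserving care is the density statement used in the second step, which is identical to the one already relied upon in \Cref{lem1}; it follows, for instance, by choosing a smooth real point of an irreducible component passing through a given real point and solving the defining $k$-equations over $\bar\Q$ near it via the implicit function theorem.
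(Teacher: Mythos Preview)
Your proposal is correct and is precisely the argument the paper has in mind: the paper does not give an explicit proof of \Cref{lem2} but only says ``Similar to \Cref{lem1}, we also have,'' meaning exactly that $W=V\cap\tr_{\gamma_0}^{-1}(m/p)$ is again a variety defined over the real number field $k$ (since $\tr_{\gamma_0}$ is a $\Q$-polynomial and $m/p\in\Q$), so the same density principle applies. Your write-up simply makes this explicit.
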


Fix a prime $p\in\Z$ such that $p\nmid d$, where $d$ is as in \eqref{eqn:sigma}.
We can (and will) further assume that \eqref{eqn:trace_gamma} is satisfied by our choice of $p$ and that the homomorphism
\[
 \varphi' \coloneqq \varphi_p
\]
has Zariski dense image.


By \Cref{lem2}, there exists a finite extension $l \subset \C$ of $k$ such that
\[
 \varphi'(\Gamma) \subset \mathbf{G}(l).
\]
Extending $l$ if necessary, we will further assume that $l$ contains all the eigenvalues
$\lambda_i$ of $\varphi'(\gamma_0)$.

Let $\nu$ denote the $p$-adic valuation on $\Q$, which we extend to a valuation on $l$. 
By the ultrametric inequality we have that
\[
 \min_i\{ \nu (\lambda_i) \} \le  \nu \big(\sum_i \lambda_i\big) = \nu(\tr_{\gamma_0}(\varphi')) <0.
\]
So, we have that
\[
 \max_i | \lambda_i |_\nu >1,
\]
where $| \cdot |_{\nu} \coloneqq e^{\nu(\cdot)}$ is the norm on $l$.
Let $l_{\nu}$ (a finite extension of $\Q_p$) denotes the completion of $l$ with respect to this norm.

Consider the group $\mathbf{G}(l_{\nu})$, which is a connected and semi-simple $\mathbb{Q}$-group (since $\mathbf{G}$ is assumed to be so).

\begin{lemma}\label{lem:unbounded}
    The homomorphism 
    \begin{equation}\label{eqn:hom}
    \varphi': \Gamma \longrightarrow  \mathbf{G}(l_{\nu})
    \end{equation}
    is unbounded.
\end{lemma}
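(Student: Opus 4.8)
The plan is to show that $\varphi'(\Gamma)$ is unbounded in $\mathbf{G}(l_\nu)$ by exhibiting a single element whose image is unbounded, namely $\varphi'(\gamma_0)$. Recall that $l$ was chosen to contain all eigenvalues $\lambda_i$ of the semisimple part of $\varphi'(\gamma_0)$ (more precisely, of $\varphi'(\gamma_0)$ acting in some faithful representation, or of $\mathrm{Ad}(\varphi'(\gamma_0))$), and that the extended $p$-adic valuation $\nu$ satisfies $\max_i |\lambda_i|_\nu > 1$. The key point is that boundedness of a subset of $\mathbf{G}(l_\nu)$ can be tested after composing with a faithful representation: fix a faithful $\mathbb{Q}$-rational representation $\rho\colon \mathbf{G}\to \mathrm{GL}_N$, so that $\mathbf{G}(l_\nu)\hookrightarrow \mathrm{GL}_N(l_\nu)$ is a closed embedding and a subset of $\mathbf{G}(l_\nu)$ is bounded if and only if its image in $\mathrm{GL}_N(l_\nu)$ is bounded (has bounded matrix entries and bounded inverse). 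Since $\mathbf{G}$ is semisimple, one can even take $\rho$ so that $\rho(\mathbf{G})\subset \mathrm{SL}_N$, making the inverse automatically controlled.

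The main steps, in order, are as follows. First I would fix a faithful representation $\rho\colon \mathbf{G}\to\mathrm{GL}_N$ defined over $\mathbb{Q}$ and recall that boundedness in $\mathbf{G}(l_\nu)$ is equivalent to boundedness of matrix entries under $\rho$. Second, I would observe that the element $g\coloneqq \varphi'(\gamma_0)$ has the property that some eigenvalue of $\rho(g)$ (or, depending on how the eigenvalues $\lambda_i$ were normalized in the excerpt, some eigenvalue occurring in $\mathrm{Ad}(g)$) has $\nu$-norm strictly greater than $1$; this follows from the displayed inequality $\max_i|\lambda_i|_\nu > 1$ together with the fact that eigenvalues of $\rho(g)$ are monomials in the $\lambda_i$ (or directly, if $\rho = \mathrm{Ad}$). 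Third, the eigenvalues of the sequence of powers $\rho(g)^n = \rho(\varphi'(\gamma_0)^n)$ include $\lambda_{i_0}^n$ with $|\lambda_{i_0}|_\nu > 1$, so $\|\rho(g)^n\|_\nu\ge \max_i |\lambda_i(\rho(g)^n)|_\nu = |\lambda_{i_0}|_\nu^n \to \infty$, since the spectral radius is a lower bound for any submultiplicative norm (equivalently, the characteristic polynomial of $\rho(g)^n$ has a coefficient of norm $\ge |\lambda_{i_0}|_\nu^n$, and coefficients are bounded by powers of the entry norm). Hence $\{\rho(g)^n : n\in\mathbb{N}\}$ is unbounded in $\mathrm{GL}_N(l_\nu)$, so $\{g^n\}\subset \varphi'(\Gamma)$ is unbounded in $\mathbf{G}(l_\nu)$, and therefore $\varphi'(\Gamma)$ is unbounded.

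I do not expect a serious obstacle here; this is essentially an unwinding of the fact that an element with an eigenvalue off the unit circle (in a non-Archimedean sense) generates an unbounded cyclic group. The one point requiring a little care is bookkeeping about \emph{which} representation the eigenvalues $\lambda_i$ refer to: in the excerpt $\gamma_0$ enters via $\tr(\mathrm{Ad}(\gamma_0))$, so the $\lambda_i$ are the eigenvalues of $\mathrm{Ad}(\varphi'(\gamma_0))$. Since $\mathrm{Ad}$ is already a faithful representation of $\mathbf{G}$ (as $\mathbf{G}$ has trivial center), it suffices to take $\rho = \mathrm{Ad}$ and argue directly: $\max_i|\lambda_i|_\nu > 1$ forces $\|\mathrm{Ad}(\varphi'(\gamma_0))^n\|_\nu \to \infty$, hence $\mathrm{Ad}\circ\varphi'$ is unbounded on $\langle\gamma_0\rangle$, hence so is $\varphi'$ since $\mathrm{Ad}$ is a proper (closed, finite-kernel — here trivial-kernel) embedding of $\mathbf{G}(l_\nu)$ into $\mathrm{GL}(\mathfrak{g})(l_\nu)$. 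A clean way to phrase the spectral-radius lower bound without invoking operator-norm theory is: if all powers $\mathrm{Ad}(\varphi'(\gamma_0))^n$ had entries of bounded $\nu$-norm, then the coefficients of their characteristic polynomials would be bounded, but the product of the $|\lambda_i|_\nu^n$ over those $i$ with $|\lambda_i|_\nu\ge 1$ grows without bound, a contradiction. This completes the proof of Lemma \ref{lem:unbounded}.
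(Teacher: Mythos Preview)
Your proposal is correct and takes essentially the same approach as the paper: the paper's proof is a one-liner observing that since $\max_i |\lambda_i|_\nu > 1$, the cyclic subgroup $\langle \varphi'(\gamma_0)\rangle$ is unbounded in $\mathbf{G}(l_\nu)$. Your version simply fills in the standard details (passing to a faithful representation, spectral-radius lower bound for the norm of powers), and your care about which representation the $\lambda_i$ live in---namely $\mathrm{Ad}$, which is faithful here---is a point the paper leaves implicit.
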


\begin{proof}
    Since the maximum $|\cdot|_{\nu}$-norm of the eigenvalues of $\gamma_0\in\Gamma$ is strictly greater than $1$, it follows that the subgroup $\<\gamma_0\> \subset \Gamma$ is unbounded in $\mathbf{G}(l_{\nu})$.
\end{proof}

It follows that $\mathbf{G}(l_{\nu})$ is noncompact.
Consequently, $\mathbf{G}(l_{\nu})$ has a nonzero $l$-rank, i.e., it has a nontrivial $l$-split torus. 

\begin{lemma}\label{lem:bounded}
    $\varphi'(\Sigma) \subset \mathbf{G}(l_{\nu})$ is bounded.
\end{lemma}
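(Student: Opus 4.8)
The plan is to exploit the fact that $\varphi'$ is, by construction, the identity on $\Sigma$. Indeed, $\varphi' = \varphi_p$ is a point of the variety $V$ defined in \eqref{eqn:V}, and every $\varphi \in V$ satisfies $\varphi(\gamma) = \gamma$ for all $\gamma \in \Sigma$. Hence $\varphi'(\Sigma) = \Sigma$, now regarded as a subgroup of $\mathbf{G}(l_\nu)$ via the fixed embeddings $k \hookrightarrow l \hookrightarrow l_\nu$. So it suffices to prove that $\Sigma$ itself is bounded in $\mathbf{G}(l_\nu)$.

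To do this, I would use the integrality away from $p$ recorded in \eqref{eqn:sigma}. Fix the affine $\mathbb{Q}$-structure on $\mathbf{G}$ that was used when writing $\Gamma \subset \mathbf{G}(R)$; with respect to it $\Sigma \subset \mathbf{G}\!\left(O_k[\tfrac{1}{d}]\right)$ by \eqref{eqn:sigma}. Since $p$ was chosen with $p \nmid d$ and $d \in \mathbb{Z}$, no prime of $O_k$ above $p$ divides $d$, so $O_k[\tfrac{1}{d}]$ is contained in the valuation ring of every prime of $O_k$ above $p$; extending $\nu$ to $l$ and passing to the completion $l_\nu$ gives $O_k[\tfrac{1}{d}] \subset O_{l_\nu}$, the ring of $\nu$-integers. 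Consequently, writing elements of $\mathbf{G}$ as matrices under a faithful representation $\mathbf{G} \hookrightarrow \mathrm{GL}_N$ defined over $\mathbb{Q}$, every $\sigma \in \Sigma$ has all entries in $O_{l_\nu}$, and the same applies to $\sigma^{-1} \in \Sigma$.

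Finally I would conclude: the set of $g \in \mathrm{GL}_N(l_\nu)$ such that both $g$ and $g^{-1}$ have entries in $O_{l_\nu}$ is the compact group $\mathrm{GL}_N(O_{l_\nu})$, so $\Sigma \subset \mathrm{GL}_N(O_{l_\nu})$ is bounded; since $\mathbf{G}(l_\nu)$ is closed in $\mathrm{GL}_N(l_\nu)$, the group $\varphi'(\Sigma) = \Sigma$ is bounded in $\mathbf{G}(l_\nu)$, as claimed. I do not expect a genuine obstacle here: the only point requiring care is the bookkeeping around the arithmetic data — in particular that the $p$-adic valuation extended to $l$ is non-negative on $O_k[\tfrac{1}{d}]$, which is exactly why the coprimality condition $p \nmid d$ was imposed in the choice of $p$.
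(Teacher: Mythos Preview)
Your proposal is correct and follows essentially the same route as the paper's own proof, which is a one-line appeal to the choice of $p$ with $p\nmid d$ together with \eqref{eqn:sigma}. You have simply made explicit the two implicit steps: that $\varphi'\in V$ forces $\varphi'|_\Sigma=\id$, and that $p\nmid d$ ensures $O_k[\tfrac{1}{d}]\subset O_{l_\nu}$ so that $\Sigma\subset \mathbf{G}(O_{l_\nu})$ is bounded.
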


\begin{proof}
This follows from our choice of $p$ (that it does not divide $d$) and the assumption \eqref{eqn:sigma} that $\Sigma\subset \bG(O_k\left[\frac{1}{d}\right])$.
\end{proof}

\begin{lemma}
    $\varphi'(\Gamma)$ is Zariski dense in $\mathbf{G}(l_{\nu})$.
\end{lemma}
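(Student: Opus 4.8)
The goal is to upgrade the Zariski density of $\varphi'(\Gamma)$ inside the real group $G=\mathbf{G}(\R)^0$, which was arranged above, to Zariski density of $\varphi'(\Gamma)$ — now regarded as a subgroup of $\mathbf{G}(l_\nu)$ — inside $\mathbf{G}$ over $l_\nu$. The key point is that Zariski density is a ``geometric'' property: whether $\varphi'(\Gamma)$ spans all of $\mathbf{G}$ depends only on the algebraic group $\mathbf{G}$ and on the vanishing ideal of $\varphi'(\Gamma)$, not on the particular field in which the matrix entries of $\varphi'(\Gamma)$ happen to lie. The plan is therefore to pass to the Zariski closure of $\varphi'(\Gamma)$ as an algebraic subgroup defined over the number field $l$, and then move freely between the two embeddings $l\hookrightarrow\C$ (coming from $l\subset\C$) and $l\hookrightarrow l_\nu$ (completion at $\nu$).

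First I would record that $l$, being a finite extension of the number field $k$, is itself a number field, so $\varphi'(\Gamma)\subset\mathbf{G}(l)\subset\mathbf{G}(\bar\Q)$. Consequently the Zariski closure $\mathbf{H}$ of $\varphi'(\Gamma)$ in $\mathbf{G}$ is an algebraic subgroup defined over $l$, and the formation of this closure commutes with field extensions of $l$: writing the condition that a function vanishes on $\varphi'(\Gamma)$ in terms of an $l$-basis of the coordinate ring $l[\mathbf{G}]$, these conditions are $l$-linear, so the vanishing ideal satisfies $I_l(\varphi'(\Gamma))\otimes_l F=I_F(\varphi'(\Gamma))$ for any field extension $F/l$. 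Then I would run the comparison: $\mathbf{G}(\R)^0$ is Zariski dense in the connected group $\mathbf{G}$ (a standard fact, see e.g.\ \cite{Zimmer}), so the assumed Zariski density of $\varphi'(\Gamma)$ in $\mathbf{G}(\R)^0$ gives $I_\R(\varphi'(\Gamma))=0$, whence $I_\C(\varphi'(\Gamma))=0$ (same $\R$-linearity argument, using $\varphi'(\Gamma)\subset\mathbf{G}(\R)$), whence $I_l(\varphi'(\Gamma))\otimes_l\C=0$, and therefore $I_l(\varphi'(\Gamma))=0$ by faithful flatness of $l\hookrightarrow\C$. Extending scalars along $l\hookrightarrow l_\nu$ yields $I_{l_\nu}(\varphi'(\Gamma))=0$, which is exactly the assertion that $\varphi'(\Gamma)$ is Zariski dense in $\mathbf{G}(l_\nu)$; and it is this conclusion that subsequently lets us apply Theorem \ref{thm:bounded} (with $\rho=\varphi'$, $k=l_\nu$, using that $\mathbf{G}$ is connected and semi-simple) to contradict Lemma \ref{lem:unbounded}.

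I do not expect a genuine obstacle here: the content is the bookkeeping that Zariski density is insensitive to the field of definition once the subgroup lives in $\mathbf{G}(\bar\Q)$, together with the density of $\mathbf{G}(\R)^0$ in $\mathbf{G}$. The two points meriting a line of justification are (a) that $l$ may be taken to be a number field, so that a single $l$ can serve as the base field for both the Archimedean picture used earlier and the $p$-adic picture used here, and (b) the base-change behaviour of the vanishing ideal, which is cleanest to argue by fixing an $l$-basis of $l[\mathbf{G}]$ and observing that all the vanishing conditions imposed by $\varphi'(\Gamma)\subset\mathbf{G}(l)$ are $l$-linear in the coordinates of a function. An equivalent, scheme-theoretic phrasing is that the scheme-theoretic closure commutes with the flat base changes $l\hookrightarrow\C$ and $l\hookrightarrow l_\nu$, and that a closed subgroup scheme which becomes all of $\mathbf{G}$ after a faithfully flat base change was already all of $\mathbf{G}$.
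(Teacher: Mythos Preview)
Your argument is correct and rests on the same principle as the paper's proof: Zariski density of $\varphi'(\Gamma)\subset\mathbf{G}(l)$ is insensitive to the ambient field, so density in $G=\mathbf{G}(\R)^0$ forces density in $\mathbf{G}(l_\nu)$. Where the paper simply records that $\varphi'$ may be chosen with Zariski dense image in $G$ and then invokes the Rosenlicht--Chevalley density theorem (Zimmer, Thm.~3.1.9), you spell out the transfer explicitly via base change of the vanishing ideal along $l\hookrightarrow\C$ and $l\hookrightarrow l_\nu$; your version is more detailed, but the content is the same.
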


\begin{proof}
    By hypothesis, $\Gamma$ is Zariski dense in $G$. So, we can choose $\varphi'(\Gamma)$ also to be Zariski dense in $G$.
    Since $\mathbf{G}(l_{\nu})$ is a $k$-group,
    a theorem due to Rosenlicht and Chevalley (see Zimmer \cite[Thm. 3.1.9]{Zimmer}) asserts that $\mathbf{G}(k)$ is Zariski dense in $\mathbf{G}(l_{\nu})$.  Hence the conclusion follows.
\end{proof}

As the action of $\Sigma$ on $G/B$ is strongly ergodic while the Zariski closure $\mathbf{G}(l_{\nu})$ of $\varphi'(\Gamma)$ in $\operatorname{GL}_n(l_\nu)$ is connected and semi-simple, we can apply \Cref{thm:bounded} to conclude that $\varphi(\Gamma)$ must be bounded in $\mathbf{G}(l_{\nu})$, which contradicts \Cref{lem:unbounded}.
The proof of \Cref{thm:trace} is now complete.\qed

\begin{theorem}\label{thm:inttrace}
Under the same hypothesis as in \Cref{thm:trace}, suppose further $\Sigma\subset \bG(O_{\bar \Q})$, where $O_{\bar \Q}$ is the ring of algebraic integers in $\bar Q$. Then, the traces of elements of $\Gamma$ are algebraic integers.
\end{theorem}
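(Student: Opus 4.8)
The plan is to run the deformation argument from the proof of \Cref{thm:trace} once more, this time exploiting the strengthened hypothesis on $\Sigma$ to eliminate the only restriction on residue characteristics that appeared there. Retain the notation of that proof: fix a finitely generated $\Z$-algebra $R\subset\R$ with $\Gamma\subset\mathbf G(R)$ (in the adjoint embedding $\mathbf G\hookrightarrow\GL_n$), set $k:=\mathrm{Frac}(R)\cap\bar\Q$, a number field, and $A:=R\cap\bar\Q\subset O_k[\tfrac1d]$. Then $\Sigma\subset\mathbf G(A)\subset\mathbf G(k)$, and since the elements of $\Sigma$ have algebraic-integer entries by hypothesis,
\[
 \Sigma\ \subset\ \mathbf G\bigl(A\cap O_{\bar\Q}\bigr)\ =\ \mathbf G\bigl(A\cap O_k\bigr)\ \subset\ \mathbf G(O_k) .
\]
The improvement over \eqref{eqn:sigma} is that $\mathbf G(O_k)$ is bounded in $\mathbf G(l_\nu)$ for every number field $l\supset k$ and every finite place $\nu$ of $l$, with no condition on $\nu$.

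Suppose for a contradiction that some $\gamma_0\in\Gamma$ has $t:=\tr(\Ad(\gamma_0))\notin O_{\bar\Q}$ (by \Cref{thm:trace} we know $t\in\bar\Q$). As in \eqref{eqn:V}, let $V$ be the $k$-variety of homomorphisms $\Gamma\to G$ fixing $\Sigma$ pointwise, put $\varphi_0:=\operatorname{id}_\Gamma\in V$, let $V_0$ be the irreducible component of $V$ through $\varphi_0$, and let $\tr_{\gamma_0}\colon V\to\mathbb A^1$ be $\varphi\mapsto\tr(\Ad(\varphi(\gamma_0)))$. I will produce $\varphi'\in V_0(\bar\Q)$ with Zariski dense image for which $t':=\tr(\Ad(\varphi'(\gamma_0)))$ is still not an algebraic integer. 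If $\tr_{\gamma_0}|_{V_0}$ is non-constant, then, arguing as in \Cref{thm:trace} (which handles the same genericity point), there is $\varphi'$ near $\varphi_0$ with $\varphi'(\Gamma)$ Zariski dense and $\tr_{\gamma_0}(\varphi')=m/p$ for some prime $p$ and $m\in\Z\smallsetminus p\Z$, so $t'=m/p\notin O_{\bar\Q}$. If $\tr_{\gamma_0}|_{V_0}$ is constant, it equals $\tr_{\gamma_0}(\varphi_0)=t$; since $V_0$ (a component of a $k$-variety) has $\bar\Q$-points, and — again as in \Cref{thm:trace} — one with Zariski dense image, we take such a point as $\varphi'$ and obtain $t'=t\notin O_{\bar\Q}$. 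This constant case does not arise in \Cref{thm:trace} ($t$ being transcendental there forces $\tr_{\gamma_0}$ to be non-constant), but it genuinely occurs here — for instance whenever $\gamma_0\in\Sigma$, though then $t$ is already integral — and it is precisely here that one uses $\Sigma\subset\mathbf G(O_k)$ rather than merely $\mathbf G(O_k[\tfrac1d])$, since no control of $p$ is then available or needed.

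With $\varphi'$ chosen, the proof closes exactly as in \Cref{thm:trace}. Since $\varphi'(\Gamma)$ is finitely generated with algebraic entries, $\varphi'(\Gamma)\subset\mathbf G(l)$ for a number field $l$, which we enlarge to contain $k$ and the eigenvalues $\lambda_1,\dots,\lambda_n$ of $\Ad(\varphi'(\gamma_0))$. As $t'=\sum_i\lambda_i\notin O_l$, some finite place $\nu$ of $l$ satisfies $\nu(t')<0$, so by the ultrametric inequality $|\lambda_j|_\nu>1$ for some $j$; hence $\langle\varphi'(\gamma_0)\rangle$, and a fortiori $\varphi'(\Gamma)$, is unbounded in $\mathbf G(l_\nu)$. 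On the other hand $\varphi'(\Sigma)=\Sigma\subset\mathbf G(O_k)\subset\mathbf G(O_{l_\nu})$ is bounded, the Zariski closure of $\varphi'(\Gamma)$ in $\GL_n(l_\nu)$ is the connected semisimple group $\mathbf G(l_\nu)$ (Rosenlicht--Chevalley, as in \Cref{thm:trace}), and $\Sigma$ acts strongly ergodically on $G/B$; so \Cref{thm:bounded} forces $\varphi'(\Gamma)$ to be bounded in $\mathbf G(l_\nu)$ — a contradiction. Therefore no such $\gamma_0$ exists, every $\tr(\Ad(\gamma))$ is an algebraic integer, and by \Cref{algebraictraces} $\Gamma$ has integral entries in every faithful representation.

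As in \Cref{thm:trace}, I expect the only genuinely delicate point to be the verification that $\varphi'$ can be taken with Zariski dense image inside the relevant fibre of $\tr_{\gamma_0}$; the rest is inherited essentially verbatim from the setup and the endgame of \Cref{thm:trace}, with the non-Archimedean bookkeeping simplified by the stronger integrality of $\Sigma$.
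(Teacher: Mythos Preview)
Your argument is correct and follows the same overall route as the paper. The paper's version is, however, more direct: since \Cref{thm:trace} already gives $t=\tr(\Ad(\gamma_0))\in\bar\Q$, the fibre $V\cap\tr_{\gamma_0}^{-1}(t)$ is itself defined over a number field, so one deforms $\varphi_0$ \emph{within that fibre} (as in \Cref{lem2}) to a nearby $\bar\Q$-point with Zariski dense image, keeping the trace equal to $t$ exactly, and then picks any finite place $\nu$ with $\nu(t)<0$. Your constant/non-constant dichotomy for $\tr_{\gamma_0}|_{V_0}$, and the reuse of the $m/p$ manoeuvre in the non-constant branch, are correct but unnecessary detours; once $t$ is known to be algebraic there is no reason to change it, and the upgrade $\Sigma\subset\mathbf G(O_k)$ (rather than $\mathbf G(O_k[\tfrac1d])$) is precisely what allows an arbitrary finite place to be used---this is the key simplification in both branches, not just the constant one.
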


\begin{proof}
    Suppose to the contrary that there exists $\gamma_0 \in \Gamma$ such that $\alpha \coloneqq \tr(\gamma_0)$ is not an algebraic integer. By \Cref{thm:trace}, we know that $\alpha\in\bar \Q$. After a small deformation of $\Gamma$ as in \Cref{lem2}, we have a number field $k\subset \R$ such that $\Gamma\subset \bG(k)$ and $\alpha\in k$.
    

    Since $\alpha \notin O_k$, we can  choose a discrete valuation $\nu$ such that $\nu(\alpha) < 0$. Let $k_\nu$ denote the completion of $k$ with respect to the norm $|\cdot|_\nu$, and let $\varphi: \Gamma \to G(k_\nu)$ the induced homomorphism. By the same argument used in the proof of \Cref{thm:trace}, $\langle \varphi(\gamma_0) \rangle$ and, hence, $\varphi(\Gamma)$ are unbounded subgroups of $\mathbf{G}(k_\nu)$.

    Since $O_k \subset k_v$ is bounded and $\Sigma \subset \mathbf{G}(O_k)$, $\Sigma \subset \mathbf{G}(k_v)$ is bounded. Thus, one finishes the proof in the same way as before.
\end{proof}

\begin{remark}\label{rem:Zdense_trace}
    In the statement of \Cref{thm:trace} (resp. \Cref{thm:inttrace}), if we assume that $\Sigma$ is also Zariski dense in $G$, then we could replace the assumption that $\Sigma \subset \bG(\bar Q)\cap \Gamma$ by the weaker assumption that $\Sigma$ has algebraic traces (resp. integral traces). In that situation, by \Cref{algebraictraces}, we could find a faithful representation $\rho: G \to \GL_n(\C)$ such that $\rho(\Sigma) \subset \GL_n(k)$ (resp. $\rho(\Sigma) \subset \GL_n(O_k)$), for a number field $k$. In that situation, $\rho(G)$ will be defined over $k$, and \Cref{thm:trace} applies to get the same conclusion.
\end{remark}

\subsection{Obstruction to Free products}

\begin{theorem}\label{free-product}
    Let $\Sigma \subset G$ be a finitely generated discrete  subgroup which acts strongly ergodically on $G/B$, and let $\gamma \in G$ non-trivial element. If the group $\Gamma$ generated by $\Sigma$ and $\gamma$ is Zariski dense, then the natural homomorphism
    \begin{equation}\label{def:phi}
        \varphi: \Sigma * \< \gamma \> \longrightarrow G
    \end{equation}
is either not discrete or not faithful.
\end{theorem}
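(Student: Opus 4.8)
The plan is to argue by contradiction, reducing everything to Theorem~\ref{thm:bounded}. Assume $\varphi$ is faithful and its image $\Gamma=\langle\Sigma,\gamma\rangle$ is discrete in $G$; then $\Gamma$, as an abstract group, is the free product $\Sigma*\langle\gamma\rangle$, and it is finitely generated, being generated by the finitely generated group $\Sigma$ together with $\gamma$. As at the beginning of the proof of Theorem~\ref{main} we may assume $G=\mathbf G(\R)^{0}$ for a connected semisimple $\Q$-group $\mathbf G$ of trivial center, fix a faithful $\Q$-representation $\mathbf G\hookrightarrow\GL_{N}$, fix an embedding $\Gamma\subset\mathbf G(R)$ with $R\subset\R$ a finitely generated $\Z$-subalgebra, and set $H\coloneqq\overline{\Sigma}^{\,\mathrm{Zar}}\subset\mathbf G$.

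The whole point of the free-product hypothesis is that there are no relations between $\Sigma$ and $\gamma$, so $\gamma$ may be replaced by a more or less arbitrary element: for any field $L$ and any $\gamma'\in\mathbf G(L)$ with $(\gamma')^{\mathrm{ord}(\gamma)}=e$, the homomorphism $\Sigma\to\mathbf G(L)$ and the assignment $\gamma\mapsto\gamma'$ extend, by the universal property of $\Sigma*\langle\gamma\rangle\cong\Gamma$, to a homomorphism $\rho\colon\Gamma\to\mathbf G(L)$. I would construct such a $\rho$ into a $p$-adic group that is bounded on $\Sigma$ but unbounded on $\Gamma$, and then invoke Theorem~\ref{thm:bounded} to reach a contradiction. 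Since $\Gamma$ is finitely generated, a spreading-out argument produces, for all but finitely many primes $p$, a non-Archimedean local field $L$ of residue characteristic $p$ (taken large enough to contain the $\mathrm{ord}(\gamma)$-th roots of unity when $\gamma$ has finite order) and a ring homomorphism $\psi\colon R\to\mathcal O_{L}$ for which the induced map $\Gamma\to\mathbf G(L)\subset\GL_{N}(L)$ carries $\Sigma$ into $\mathbf G(\mathcal O_{L})$ — so $\psi_{*}(\Sigma)$ is bounded — and, for a generic such $p$, carries $\Sigma$ onto a subgroup still Zariski dense in $H$. Now choose $\gamma'$ as follows. If $\gamma$ has infinite order, let $\gamma'$ be a loxodromic element of $\mathbf G(L)$ (one with an eigenvalue of nonzero valuation, so that $\langle\gamma'\rangle$ is unbounded) lying in the Zariski-open locus $U\coloneqq\{g\in\mathbf G\colon \langle H,g\rangle\text{ is Zariski dense in }\mathbf G\}$; this locus is nonempty because it contains $\gamma$, since $\langle H,\gamma\rangle\supseteq\langle\Sigma,\gamma\rangle=\Gamma$ is Zariski dense by hypothesis. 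If $\gamma$ has finite order $n$, let $\gamma'=h\gamma_{n}h^{-1}$ where $\gamma_{n}\in\mathbf G(\mathcal O_{L})$ has order $n$ and $h\in\mathbf G(L)$ is chosen so that $\gamma'\in U$ and $h$ moves the base vertex of the Bruhat–Tits building of $\mathbf G(L)$ very far from that of $\mathbf G(\mathcal O_{L})$; one then checks that, for a suitable $\sigma\in\Sigma$ with $\psi_{*}(\sigma)\neq e$, the product $\gamma'\psi_{*}(\sigma)$ — a composition of two elliptic isometries of the building with fixed sets at large distance — is loxodromic, so $\langle\gamma'\psi_{*}(\sigma)\rangle$ is unbounded. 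In either case put $\rho|_{\Sigma}=\psi_{*}|_{\Sigma}$ and $\rho(\gamma)=\gamma'$.

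With these choices $\rho\colon\Gamma\to\GL_{N}(L)$ meets the hypotheses of Theorem~\ref{thm:bounded}: (i) $\rho(\Sigma)=\psi_{*}(\Sigma)$ is bounded; (ii) $\Sigma$ acts strongly ergodically on $G/B$, by assumption; (iii) the Zariski closure of $\rho(\Gamma)=\langle\psi_{*}(\Sigma),\gamma'\rangle$ coincides with that of $\langle H,\gamma'\rangle$, which is $\mathbf G(L)$ because $\gamma'\in U$, hence it is connected and semisimple. The theorem then forces $\rho(\Gamma)$ to be bounded — contradicting the presence in $\rho(\Gamma)$ of the loxodromic element $\gamma'$ (resp.\ $\gamma'\psi_{*}(\sigma)$), which generates an unbounded subgroup. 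This contradiction shows $\varphi$ cannot be both faithful and discrete.

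I expect the main obstacle to be making the choices in the second paragraph rigorous, in two places especially. First, the specialization $\psi$: one must be able to choose $p$ so that $\psi$ simultaneously keeps $\Sigma$ bounded and preserves the Zariski closure $H$ — routine when the adjoint trace field of $\Gamma$ is a number field, but requiring care over a general finitely generated field. Second, and more seriously, the case of finite-order $\gamma$: there $\rho(\gamma)$ itself necessarily generates a bounded group, so the unboundedness of $\rho(\Gamma)$ has to be squeezed out of the interaction between $\Sigma$ and $\gamma$, and the Bruhat–Tits building statement invoked above (a composition of elliptic isometries with distant fixed sets is hyperbolic) must be made precise while simultaneously respecting the genericity requirement $\gamma'\in U$ — this may force one to pass to finite-index subgroups of $\Sigma$ and $\Gamma$ (harmless by strong ergodicity) and to control the fixed set of $\psi_{*}(\Sigma)$ in the building. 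It is worth emphasizing that the argument uses Zariski density of $\Gamma$, not of $\Sigma$, precisely so that $\langle H,\gamma'\rangle$ remains Zariski dense for a generic replacement $\gamma'$; this is what allows the proof to cover subgroups $\Sigma$ that are not themselves Zariski dense.
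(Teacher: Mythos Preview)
Your strategy is exactly the paper's: manufacture a representation $\rho:\Gamma\to\GL_N(L)$ over a non-Archimedean local field $L$ that is bounded on $\Sigma$ but not on $\Gamma$, and invoke Theorem~\ref{thm:bounded}. The difference lies in the choice of target. You map into $\mathbf G(L)$ and must then arrange that the Zariski closure of $\rho(\Gamma)$ is all of $\mathbf G$, which forces you to place $\gamma'$ in the locus $U$ and to control how $H=\overline\Sigma^{\,\mathrm{Zar}}$ behaves under the specialization $\psi$. The paper instead passes to a simple noncompact Levi factor $\mathbf L$ of $H$: the projection $\Sigma\to\mathbf L$ already has Zariski-dense image, so after arranging (via the representation variety of $\Sigma$, much as in your spreading-out step) that $\Sigma\subset\mathbf L(k)$ for a number field $k$, one picks a finite place $\nu$ with $\mathbf L(k_\nu)$ noncompact and $\Sigma$ bounded there, and sends $\gamma$ to \emph{any} unbounded element of $\mathbf L(k_\nu)$. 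The Zariski closure of $\rho(\Gamma)$ is then automatically $\mathbf L$, since it already contains $\overline{\rho(\Sigma)}=\mathbf L$; the locus $U$, the genericity requirement on $\gamma'$, and the need to specialize all of $\Gamma$ rather than just $\Sigma$ all evaporate. The paper also dispatches the finite-order case by first replacing $\gamma$ with an infinite-order element of $\Gamma$, rather than via your Bruhat--Tits argument.

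One technical caution about your route as written: it is not obvious that $U=\{g\in\mathbf G:\langle H,g\rangle\text{ is Zariski dense in }\mathbf G\}$ is Zariski open, since its complement is the union of the (possibly infinitely many) proper closed subgroups of $\mathbf G$ containing $H$. You know $U(\R)\ni\gamma$ is nonempty, but transferring this to $U(L)$ and intersecting with the loxodromic locus requires justification. Projecting to $\mathbf L$ sidesteps this issue completely, and this is what makes the paper's argument shorter than what you outline.
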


\begin{proof}
We can assume that $\gamma \notin \Sigma $, and arguing by contradiction that $\Sigma * \< \gamma \>$ is faithful and discrete, we can also assume that $\gamma$ has infinite order, and therefore the group $\Gamma$ generated by  $\Sigma$ and $\gamma$ must be isomorphic to $\Sigma * \ZZ$.

Let $\mathbf{H}$ be the Zariski closure of $\Sigma$ in $\bG$. The ergodicity assumption implies that $\Sigma$ is not virtually solvable, and therefore $\mathbf{H}$ must have a Levi factor $\mathbf{L}$ which is simple and noncompact, and the natural homomorphism $\rho: \Sigma \to \mathbf{L}$ has Zariski dense image.

We can assume that $\rho(\Sigma) \subset \mathbf{L}(k)$ for a number field $k$ (otherwise, we can look at representation variety of such representations, and find another representation $\rho'$ so that $\rho'(\Sigma) \subset \mathbf{L}(k)$). From the finitely generated assumption, we can assume that there exists a prime non-Archimedean valuation $\nu$ so that $\mathbf{L}(k_\nu)$ is not compact, and the induced homomorphism $\rho'': \Sigma \to \mathbf{L}(k_\nu)$ has compact image. We can take a non-compact element $\gamma''$ of $\mathbf{L}(k_\nu)$ and extends $\rho''$ to $\Sigma * \ZZ$ by sending the generator of $\ZZ$ to $\gamma''$, and this is clearly contradicts \Cref{thm:bounded}.
\end{proof}

The condition that $\Sigma \subset G$ is finitely generated is perhaps redundant in the above result. For instance, a minor modification of the argument shows the following:

\begin{theorem}\label{free-product-infgen}
    Let $\Sigma \subset G$ be a discrete  subgroup which acts strongly ergodically on $G/B$ and let $\gamma \in G$ be an infinite order element. Suppose that $\Sigma \cong \mathbf{F}_\infty$, the free group on (countably) infinitely many generators. Then the natural homomorphism 
    $
        \varphi: \Sigma * \< \gamma \> \longrightarrow G
    $
is either not discrete or not faithful.
\end{theorem}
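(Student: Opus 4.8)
The plan is to reduce to the finitely generated situation already treated in \Cref{free-product}, and to use the freeness of $\Sigma$ to transport the resulting contradiction back up. As in that proof, I would first reduce to the case $\gamma\notin\Sigma$ (otherwise $\varphi$ is visibly not injective), and, arguing by contradiction, assume $\varphi$ is discrete and faithful; then $\Gamma:=\varphi(\Sigma*\<\gamma\>)$ is a discrete subgroup of $G$ isomorphic to $\Sigma*\ZZ$. Let $\mathbf H$ be the Zariski closure of $\Sigma$ in $\mathbf G$. Since $\Sigma\cong\mathbf F_\infty$ is not virtually solvable (equivalently, by the strong ergodicity argument used in \Cref{free-product}), $\mathbf H$ has a simple noncompact Levi factor $\mathbf L$, and the induced homomorphism $\rho\colon\Sigma\to\mathbf L$ has Zariski dense image.

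Next I would extract a finitely generated free subgroup that already ``sees'' $\mathbf L$. Writing $\Sigma=\<\sigma_1,\sigma_2,\dots\>$, the Zariski closures of the subgroups $\rho(\<\sigma_1,\dots,\sigma_j\>)$ form an increasing chain of closed subgroups of $\mathbf L$, which stabilizes by noetherianity; hence there is $m$ such that $\Sigma_0:=\<\sigma_1,\dots,\sigma_m\>$ (free of rank $m$) has $\rho(\Sigma_0)$ Zariski dense in $\mathbf L$. Now I would run the argument of \Cref{free-product} essentially verbatim on the finitely generated group $\Sigma_0$ and the simple noncompact group $\mathbf L$: after a deformation inside the representation variety one obtains $\rho'\colon\Sigma_0\to\mathbf L$ with Zariski dense image and $\rho'(\Sigma_0)\subset\mathbf L(k)$ for a number field $k$; clearing denominators of the finitely many generators gives $d\in\ZZ$ with $\rho'(\Sigma_0)\subset\mathbf L(O_k[1/d])$; choosing a non-archimedean place $\nu\nmid d$ at which $\mathbf L$ is isotropic (such places exist, since a semisimple group of positive absolute rank is quasi-split, hence isotropic, at almost all places), the group $\mathbf L(k_\nu)$ is noncompact while $\rho'(\Sigma_0)$ is bounded in $\mathbf L(k_\nu)$. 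Picking $g\in\mathbf L(k_\nu)$ with $\<g\>$ unbounded and sending the generator of $\<\gamma\>$ to $g$ extends $\rho'$ to a homomorphism $\psi\colon\Sigma_0*\<\gamma\>\to\mathbf L(k_\nu)$ with $\psi(\Sigma_0)$ bounded, $\psi(\gamma)$ unbounded, and with the Zariski closure of $\psi(\Sigma_0*\<\gamma\>)$ equal to $\mathbf L$.

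Finally I would return to $\Sigma$ using freeness: since $\Sigma$ is free on $\{\sigma_i\}_{i\ge1}$, the group $\Sigma*\<\gamma\>$ is free on $\{\sigma_i\}_{i\ge1}\cup\{\gamma\}$, so there is a retraction $R\colon\Sigma*\<\gamma\>\to\Sigma_0*\<\gamma\>$ fixing $\sigma_1,\dots,\sigma_m$ and $\gamma$ and killing each $\sigma_i$ with $i>m$. Because $\varphi$ is faithful, $\psi\circ R$ factors through $\varphi$ and yields a homomorphism $\bar\rho\colon\Gamma\to\mathbf L(k_\nu)\subset\GL_N(k_\nu)$ (for a suitable faithful embedding $\mathbf L\subset\GL_N$). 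Now $\bar\rho(\Sigma)=\rho'(\Sigma_0)$ is bounded, $\Sigma$ acts strongly ergodically on $G/B$ by hypothesis, and the Zariski closure of $\bar\rho(\Gamma)=\psi(\Sigma_0*\<\gamma\>)$ is $\mathbf L$, which is connected and semisimple. Hence \Cref{thm:bounded} applies and forces $\bar\rho(\Gamma)$ to be bounded --- contradicting the fact that $\bar\rho(\gamma)=g$ generates an unbounded subgroup. This contradiction proves the theorem.

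I expect the only delicate point to be the one already present in \Cref{free-product}, namely the deformation producing a Zariski dense representation of $\Sigma_0$ into $\mathbf L$ over a number field (which tacitly uses that $\mathbf L$ can be conjugated into $\GL_N$ over $\bar\Q$ and that Zariski density of a finite tuple of elements is stable under small perturbations). The genuinely new ingredients here --- extracting $\Sigma_0$ by noetherianity and pulling the representation back along the retraction $R$ --- are soft; this is precisely why the statement is phrased for $\mathbf F_\infty$, since freeness is what guarantees that the representation $\rho'$ defined on the finitely generated piece $\Sigma_0$ extends (here, trivially) to all of $\Sigma$, and it is consistent with the authors' remark that the finite-generation hypothesis of \Cref{free-product} is probably unnecessary.
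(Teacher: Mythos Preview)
Your argument is correct, but it takes a considerably longer path than the paper's own sketch. The paper exploits the hypothesis $\Sigma\cong\mathbf F_\infty$ at the very first step rather than the last: since $\Sigma$ is abstractly free, one may simply \emph{choose} any faithful representation of $\Sigma$ into a convenient target, and the paper picks an embedding $\Sigma\hookrightarrow\SL_2(\ZZ)$. With $\mathbf L=\SL_2$ and $k=\QQ$, everything needed is already in place: $\rho(\Sigma)$ is Zariski dense in $\SL_2$ (being non-virtually-solvable), lies in $\SL_2(\ZZ)$, hence is bounded in $\SL_2(\QQ_p)$ for every prime $p$, while $\SL_2(\QQ_p)$ is noncompact. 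One then extends to $\Sigma*\langle\gamma\rangle$ by sending $\gamma$ to an unbounded element and invokes \Cref{thm:bounded} exactly as in \Cref{free-product}. No Levi decomposition, no extraction of $\Sigma_0$, no deformation into a number field, and no retraction are required.

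Your route instead mirrors \Cref{free-product} literally --- working with the actual Zariski closure of $\Sigma$ inside $\mathbf G$ --- and only invokes freeness at the end, via the retraction $R$. This works, and the reduction to a finitely generated $\Sigma_0$ is a natural idea that would be relevant if one wanted to drop the freeness hypothesis altogether. One small quibble: the phrase ``stabilizes by noetherianity'' is not quite right as stated, since ascending chains of closed subsets need not stabilize in a noetherian space; what you actually need is the (standard, but different) fact that a Zariski dense subgroup of a connected algebraic group contains a finitely generated Zariski dense subgroup. This is easily justified (e.g.\ via dimension of the identity component together with simplicity of $\mathbf L$), but deserves a word. In any case, for the specific statement at hand the paper's shortcut --- use freeness to \emph{define} the representation rather than to \emph{extend} it --- is the more economical move.
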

\begin{proof}[Sketch]
    Realize $\Sigma$ as a subgroup of $\SL_2(\Z)$, which is a lattice in $\mathbf{L}(\R)$, where $\mathbf L = \SL_2$. With this, repeat the proof of \Cref{free-product}.
\end{proof}

In \cite{dey-hurtado-remarks}, it is shown that in any non-compact connected real semi-simple Lie group $G$ with finite center, there exist infinitely many regular loxodromic elements $g_n \in G$, $n\in\N$, freely generating a discrete subgroup $\Gamma$ of $G$ such that their attractive fixed points in $G/B$ form a dense set. Thus $\Gamma$ acts minimally on $G/P$. Now, consider the subgroup $\Gamma_0 \coloneqq \< g_2,g_3,\dots\>$ of $\Gamma$, which, for a similar reason, acts minimally on $G/B$. Moreover, $\Gamma$ is naturally isomorphic to $\<g_1\>*\Gamma_0$. Thus, by \Cref{free-product-infgen}, some finite index subgroup of $\Gamma_0$ cannot act ergodically on $G/B$. So, we have the following:

\begin{corollary}
    Every non-compact connected real semi-simple Lie group $G$ with finite center
contains an infinitely generated discrete subgroup that acts minimally but not ergodically on $G/B$.
\end{corollary}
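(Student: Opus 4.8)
The plan is to build the desired subgroup out of the infinitely generated free discrete subgroup $\Gamma = \langle g_1, g_2, g_3, \dots\rangle$ of $G$ produced in \cite{dey-hurtado-remarks}, whose attracting fixed points in $G/B$ are dense, and to play \Cref{free-product-infgen} off against the plain discreteness and faithfulness of $\Gamma$ in $G$. First I would record what the text already observes: the cofinal tail $\Gamma_0 \coloneqq \langle g_2, g_3, \dots\rangle$ is discrete, is free on $g_2,g_3,\dots$ hence isomorphic to $\mathbf F_\infty$, is Zariski dense (this being built into the construction of \cite{dey-hurtado-remarks}), and acts minimally on $G/B$ for the same reason $\Gamma$ does. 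Since $\Gamma$ is free on the $g_n$, the tautological inclusion $\Gamma \hookrightarrow G$ \emph{is} the natural homomorphism $\varphi\colon \Gamma_0 * \langle g_1\rangle \to G$ of \Cref{free-product-infgen}, and it is both faithful (its image is $\Gamma$, free on $\{g_n\}$) and discrete (its image $\Gamma$ is discrete). As the conclusion of \Cref{free-product-infgen} thus fails for $\varphi$, while $\Gamma_0$ is a discrete subgroup isomorphic to $\mathbf F_\infty$ and $g_1$ has infinite order, the remaining hypothesis of that theorem must fail: $\Gamma_0$ does \emph{not} act strongly ergodically on $G/B$. Hence some finite index subgroup $\Gamma_0' \le \Gamma_0$ (possibly $\Gamma_0$ itself) does not act ergodically on $G/B$, i.e.\ preserves a measurable subset of $G/B$ of neither zero nor full Lebesgue measure; and $\Gamma_0'$ is infinitely generated since a finite index subgroup of $\mathbf F_\infty$ is, by Nielsen--Schreier, free of rank $\aleph_0$.

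The step I expect to be the main obstacle --- it is exactly the point the one-paragraph sketch in the text leaves implicit --- is verifying that $\Gamma_0'$ still acts \emph{minimally} on $G/B$. I would argue through limit sets in the flag variety. As $\Gamma_0$ is Zariski dense and discrete, its limit set $\Lambda_{\Gamma_0} \subseteq G/B$ is the unique minimal nonempty closed $\Gamma_0$-invariant subset (Benoist) and equals the closure of the set of attracting fixed points of the loxodromic elements of $\Gamma_0$; minimality of $\Gamma_0$ on $G/B$ is precisely the statement $\Lambda_{\Gamma_0} = G/B$. Now $\Gamma_0'$ is again Zariski dense (a finite index subgroup of a Zariski dense group is Zariski dense), so it has a limit set $\Lambda_{\Gamma_0'}$, and $\Lambda_{\Gamma_0'} \subseteq \Lambda_{\Gamma_0}$ since $\Lambda_{\Gamma_0}$ is a closed $\Gamma_0'$-invariant set. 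Conversely, each loxodromic $g \in \Gamma_0$ has a power $g^m \in \Gamma_0'$ with the same attracting fixed point, so $\Lambda_{\Gamma_0'}$ contains all those fixed points and hence their closure $\Lambda_{\Gamma_0}$. Therefore $\Lambda_{\Gamma_0'} = \Lambda_{\Gamma_0} = G/B$, which says $\Gamma_0'$ acts minimally on $G/B$. Putting the two parts together, $\Gamma_0'$ is an infinitely generated discrete subgroup of $G$ acting minimally but not ergodically on $G/B$.

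The remaining items are bookkeeping that I would dispatch by citing the relevant sources: that the construction in \cite{dey-hurtado-remarks} indeed yields $\Gamma$, and each of its cofinal tails, Zariski dense with attracting fixed points dense in $G/B$; and the standard facts that a Zariski dense discrete subgroup of a semi-simple Lie group acts minimally on its limit set in $G/B$ and that this limit set is unchanged upon passing to a finite index subgroup. None of these is expected to present any difficulty.
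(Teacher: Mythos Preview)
Your proposal is correct and follows essentially the same route as the paper: take $\Gamma_0 = \langle g_2,g_3,\dots\rangle$, observe that $\Gamma = \Gamma_0 * \langle g_1\rangle \hookrightarrow G$ is discrete and faithful, and invoke the contrapositive of \Cref{free-product-infgen} to get a finite index subgroup $\Gamma_0'\le\Gamma_0$ that fails to act ergodically. You actually go a step further than the paper's one-paragraph argument, which stops at the existence of $\Gamma_0'$ and leaves its minimality and its being infinitely generated implicit; your limit-set argument (powers of loxodromics land in $\Gamma_0'$ with the same attracting fixed points, so $\Lambda_{\Gamma_0'}=\Lambda_{\Gamma_0}=G/B$) and the Nielsen--Schreier remark fill exactly those gaps.
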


\section{Remarks on irreducible discrete subgroups of $\SL_2(\mathbb{R}) \times \SL_2(\R)$}

Recall that an {\em irreducible subgroup} in a semi-simple Lie group $G$ is a subgroup that projects densely in each of the simple factors of $G$. For the rest of the section, we let
\[
G \coloneqq \SL_2(\mathbb{R}) \times \SL_2(\R)
\]

One can prove that every irreducible group must act minimally on $\S^1\times \S^1$ (see \Cref{prop: niceelements} below), and we ask the following:

\begin{question}\label{ques:irr}
    If $\Gamma < G$ is irreducible, must it act ergodically on the Furstenberg boundary $\S^1\times \S^1$?
\end{question}

A positive answer to this question will have the following remarkable consequences (\Cref{irreducibleergodic} in the introduction):

\begin{corollary}
 If every irreducible discrete subgroup of $G$ acts ergodically on the Furstenberg boundary $\S^1\times \S^1$, then the following hold:
    \begin{enumerate}[label=(\roman*)]
        \item No irreducible discrete subgroup $\Gamma$ of $G$ is Gromov hyperbolic.
        \item Let $\Gamma$ be a hyperbolic group, which embeds discretely as a subgroup of $G$. Then $\Gamma$ is virtually either a free group or a surface group.
        \item If $M$ is a finite volume hyperbolic 3-manifold, then $\pi_1(M)$ does not admit any discrete and faithful homomorphism into $G$.
    \end{enumerate}
\end{corollary}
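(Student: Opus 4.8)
The plan is to reduce all three statements to the free‑product obstruction of Theorem~\ref{main}(iii): it suffices to produce, inside the group under consideration, a subgroup isomorphic to a nontrivial free product $\Sigma*\langle\gamma\rangle$ with $\gamma$ of infinite order and $\Sigma$ an irreducible discrete subgroup of $G$. Two preliminary observations make this work. First, an irreducible subgroup of $G$ is automatically Zariski dense: by Goursat's lemma its Zariski closure surjects onto each factor and is not the graph of an isomorphism, since neither a diagonally embedded $\SL_2(\RR)$ nor a single $\SL_2(\RR)$ factor contains a discrete subgroup with a dense projection. Second, a finite‑index subgroup of a dense subgroup of the \emph{connected} group $\SL_2(\RR)$ is again dense, so every finite‑index subgroup of an irreducible group is irreducible; hence, under the standing hypothesis, every irreducible discrete subgroup of $G$ acts \emph{strongly} ergodically on $\S^1\times\S^1$. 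Thus, once a subgroup $\Sigma*\langle\gamma\rangle$ as above is located inside a finitely generated Zariski dense discrete $\Gamma$, its inclusion is a faithful homomorphism $\langle\gamma\rangle*\Sigma\to\Gamma$, contradicting Theorem~\ref{main}(iii) (one may also invoke Theorem~\ref{free-product}).

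\emph{Part (i).} If an irreducible discrete $\Gamma<G$ were Gromov hyperbolic it would be non‑elementary, since finite and virtually cyclic groups have non‑dense projections. As $p_1(\Gamma)$ is dense in $\SL_2(\RR)$, the Breuillard--Gelander theorem on dense free subgroups yields $a_1,a_2\in\Gamma$ with $\langle p_1(a_1)^n,p_1(a_2)^m\rangle$ dense for all $n,m\neq 0$, and likewise $c_1,c_2\in\Gamma$ with $\langle p_2(c_1)^n,p_2(c_2)^m\rangle$ dense. Such $a_1,a_2$ (resp.\ $c_1,c_2$) are automatically independent loxodromics---a common fixed point on $\partial\Gamma$ would force $\langle a_1,a_2\rangle$, hence its $p_1$‑image, to be virtually cyclic---and after replacing $(c_1,c_2)$ by a generic conjugate the four elements are pairwise independent, so for $N$ large $\Sigma:=\langle a_1^N,a_2^N,c_1^N,c_2^N\rangle$ is a quasiconvex free subgroup of rank $4$ which is still irreducible. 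If $[\Gamma:\Sigma]=\infty$ then $\Lambda\Sigma$ is a proper closed subset of $\partial\Gamma$; choosing a loxodromic $\gamma$ with both fixed points off $\Lambda\Sigma$, a Klein--Maskit‑type combination for quasiconvex subgroups of hyperbolic groups gives $\langle\Sigma,\gamma^M\rangle\cong\Sigma*\langle\gamma^M\rangle$ inside $\Gamma$ for $M$ large---the desired contradiction. If instead $[\Gamma:\Sigma]<\infty$ then $\Gamma$ is virtually free; I would pass to a finite‑index free subgroup $F$ of rank $\ge 5$ (still irreducible), repeat the construction inside $F$ to get a $\le 4$‑generated irreducible subgroup $H\le F$ (necessarily of infinite index, since a finite‑index subgroup of $F$ has rank $\ge 5$), and apply Marshall Hall's theorem: $H$ is a free factor of a finite‑index subgroup $F'=H*L$ with $L\neq\{1\}$, so any $\gamma\in L\smallsetminus\{1\}$ yields $H*\langle\gamma\rangle\le F'\le\Gamma$.

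\emph{Parts (ii) and (iii).} Let $\Gamma<G$ be hyperbolic and discrete (in (iii), $\Gamma=\rho(\pi_1(M))$). If $\Gamma$ is irreducible, the construction of~(i) already gives a contradiction; for a cusped hyperbolic $3$‑manifold, where $\pi_1(M)$ is only relatively hyperbolic, one runs the same argument with relatively quasiconvex Schottky subgroups generated by high powers of loxodromic (non‑parabolic) elements and the relatively hyperbolic combination theorem. So assume $\Gamma$ is reducible, say $p_1(\Gamma)$ not dense; then $\overline{p_1(\Gamma)}$ is a proper closed subgroup of $\SL_2(\RR)$, hence either discrete (Fuchsian) or amenable. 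Put $N:=\ker(p_1|_\Gamma)$; since $p_2|_N$ is injective, $N$ is a discrete, hence Fuchsian, subgroup of $\SL_2(\RR)$, nontrivial and non‑abelian (else $\Gamma$ would be amenable), so $p_2(N)$ is Zariski dense and its normalizer $\mathrm N_{\SL_2(\RR)}(p_2(N))$ is discrete (a connected group normalizing a discrete Zariski‑dense subgroup is central, hence trivial). Since $\Gamma$ normalizes $\{1\}\times p_2(N)$ we get $\Gamma\subset \overline{p_1(\Gamma)}\times \mathrm N_{\SL_2(\RR)}(p_2(N))$, a product of a proper closed subgroup of $\SL_2(\RR)$ with a Fuchsian group. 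A case analysis then finishes: using that a discrete subgroup of a proper amenable subgroup of $\SL_2(\RR)$ is virtually cyclic, that a hyperbolic group contains neither a $\Z^2$ nor a normal $\Z$, that a group embedding in a Fuchsian group is virtually free or a surface group, and that a finitely generated infinite‑index normal subgroup of a Fuchsian (or Kleinian) group is geometrically constrained, every remaining configuration forces either an immediate contradiction or that $\Gamma$ is virtually free or a surface group---which contradicts the hypothesis of~(ii), and is absurd for $\pi_1(M)$, proving (iii). The only point needing extra care is the sub‑case where $M$ has cusps and $N$ is infinitely generated, handled using the structure of normal subgroups of lattices in $\SL_2(\CC)$.

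The step I expect to be the real obstacle is the construction in~(i) of the free‑product factor $\Sigma$: it must be \emph{simultaneously} irreducible---which is exactly what Breuillard--Gelander supplies, once one observes that elements all of whose projected powers generate dense subgroups are forced to be loxodromic and pairwise independent---and a genuine free factor (or quasiconvex infinite‑index subgroup) of $\Gamma$, so that $\Sigma*\langle\gamma\rangle$ really embeds in $\Gamma$. Reconciling these two demands, disposing of the case in which the irreducible subgroup one writes down happens to have finite index (the virtually free case, handled via Marshall Hall's theorem), and carrying the combination argument over to the relatively hyperbolic setting needed for cusped hyperbolic $3$‑manifolds, is where essentially all the work lies.
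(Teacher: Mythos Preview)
Your strategy---build an irreducible $\Sigma$ and an extra $\gamma$ with $\Sigma*\langle\gamma\rangle\hookrightarrow\Gamma$, observe that the hypothesis upgrades to \emph{strong} ergodicity because finite-index subgroups of irreducible groups are irreducible, then invoke Theorem~\ref{free-product}---is exactly the paper's, and your case analysis for (ii) is close to the paper's (the paper packages the solvable-projection case as \Cref{prop:HtimesR} and, for the both-projections-discrete case, cites the Bridson--Howie--Miller--Short theorem on $\mathrm{FP}_2$ subgroups of products of surface/free groups). The execution for (i) and (iii), however, is substantially heavier than the paper's. For (i) the paper avoids Breuillard--Gelander, quasiconvex combination, and Marshall Hall altogether: \Cref{prop: niceelements} produces $\gamma_1=(e_1,h_1)$, $\gamma_2=(h_2,e_2)$ with $e_i$ infinite-order elliptic, so \emph{all} power pairs $\gamma_1^n,\gamma_2^m$ are automatically irreducible (this is exactly the ``persistence under powers'' you need, and which Breuillard--Gelander does \emph{not} directly provide---that theorem gives a dense free pair, not one whose arbitrary powers stay dense); a ping-pong reduces to $\Gamma\cong F_2=\langle\gamma_1,\gamma_2\rangle$, and then one line of free-group combinatorics finishes: $\{\gamma_1^2,\gamma_2^2,\gamma_1\gamma_2\}$ is a free basis of an index-two subgroup, so $\Sigma=\langle\gamma_1^2,\gamma_2^2\rangle$ and $\gamma=\gamma_1\gamma_2$ give the required $\Sigma*\langle\gamma\rangle$ for free. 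For (iii) the paper bypasses relative hyperbolicity entirely by using that every finite-volume hyperbolic $3$-manifold group contains quasifuchsian closed-surface subgroups (Kahn--Markovic, Cooper--Futer, Kahn--Wright), hence contains a genuine word-hyperbolic subgroup $\pi_1(S)*\ZZ$ that is neither virtually free nor a surface group, and then (ii) applies directly; this sidesteps the relative combination theorem and the infinitely-generated-kernel issues you flag. Your route would work once the Breuillard--Gelander claim is repaired (e.g.\ by choosing elements with infinite-order elliptic projections, which is effectively what the paper does), but the paper's choices make each step elementary.
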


Regarding the final item, note that for cohomological reasons, if $M$ is a closed hyperbolic $n$-manifold where $n \ge 4$, then $\pi_1(M)$ does not embed discretely as a subgroup of $G$.

We will make use of the following elementary observations:

\begin{lemma}\label{lem:elliptic}
    Let $\Gamma$ be a finitely generated dense subgroup of $\SL_2(\R)$. Then, $\Gamma$ contains an infinite order elliptic element.
\end{lemma}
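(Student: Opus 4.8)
The statement asserts that a finitely generated dense subgroup $\Gamma < \SL_2(\R)$ must contain an infinite-order elliptic element. The plan is to argue by contradiction: suppose every elliptic element of $\Gamma$ has finite order. The key dichotomy to exploit is that an element $g \in \SL_2(\R)$ is elliptic precisely when $|\tr(g)| < 2$, parabolic when $|\tr(g)| = 2$, and hyperbolic when $|\tr(g)| > 2$; and that an elliptic element has infinite order iff its rotation angle is an irrational multiple of $\pi$, equivalently iff $\tr(g) = 2\cos\theta$ with $\theta/\pi \notin \QQ$.

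First I would observe that since $\Gamma$ is dense in $\SL_2(\R)$, by continuity of the trace map there certainly exist elements $g \in \Gamma$ with $|\tr(g)| < 2$, i.e. elliptic elements; the question is producing one of infinite order. Suppose not, so that every elliptic $g \in \Gamma$ has $\tr(g) \in \{2\cos(p\pi/q)\}$, a countable set of algebraic numbers. The idea is to use density to perturb: fix a hyperbolic element $h_0 \in \Gamma$ (which exists by density), and consider, for generators or suitable words $w$, the one-parameter-like family of traces $\tr(h_0^n w)$ or $\tr(g w)$ as $g$ ranges over a small dense subset of a neighborhood in $\SL_2(\R)$. Concretely: pick any non-elliptic $g_1 \in \Gamma$ and any $g_2 \in \Gamma$ that does not commute with it; since $\Gamma$ is dense, the set $\{\tr(g_1^{a} g_2^{b}) : a,b \in \ZZ\}$ — or better, the traces of a dense family of elements obtained by multiplying $g_1$ by elements of $\Gamma$ close to suitable rotations — sweeps out an interval (or at least an uncountable set) of real values. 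More cleanly: because $\Gamma$ is dense, for any target elliptic element $r \in \SL_2(\R)$ of infinite order and any $\eps>0$ there is $\gamma \in \Gamma$ within $\eps$ of $r$; taking $\eps$ small enough forces $|\tr(\gamma)|<2$, so $\gamma$ is elliptic, and as $\eps \to 0$ along a sequence, $\tr(\gamma) \to \tr(r) = 2\cos\theta$ with $\theta/\pi$ irrational. But each such $\gamma$ is, by assumption, a finite-order elliptic, so $\tr(\gamma)$ lies in the countable set $S = \{2\cos(p\pi/q)\}$. Since $S$ has empty interior and is not closed in the relevant way — actually $\overline{S} = [-2,2]$, so that alone is not a contradiction — one must be more careful.

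The fix is to use finite generation to get a global algebraicity constraint rather than a topological one. Here is the refined approach: if every elliptic element of $\Gamma$ has finite order, I claim the set of traces of elliptic elements of $\Gamma$ is contained in $S \cap [-2,2]$ where $S$ consists of algebraic numbers of a very restricted form ($2\cos$ of rational multiples of $\pi$, hence each $\tr(g)$ is an algebraic integer all of whose Galois conjugates lie in $[-2,2]$). Now exploit density combined with a trace-relation argument: choose $g \in \Gamma$ hyperbolic and $h \in \Gamma$ with $[g,h]$ having infinite order and nonzero "off-diagonal" action, and consider the function $t \mapsto \tr(\exp(tX)g)$ where $X$ is in the Lie algebra — approximated inside $\Gamma$ by density — which is a non-constant real-analytic function passing transversally through the value $2$. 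Along a sequence $\gamma_n \to \exp(t_0 X) g$ in $\Gamma$ with $\tr(\gamma_n)$ ranging over a sequence of distinct values accumulating at some point in $(-2,2)$, infinitely many $\gamma_n$ are elliptic; since the map $\theta \mapsto 2\cos\theta$ is a diffeomorphism on $(0,\pi)$ and the rational multiples of $\pi$ are "spread out" with the property that between the ones with small denominator there are large gaps, I would instead pin down a specific contradiction by noting that $\Gamma$ has a well-defined (finitely generated) trace field $k_\Gamma \subset \RR$; each finite-order elliptic contributes a trace that is an algebraic number, but density forces $k_\Gamma$ to contain $\tr(\gamma)$ for $\gamma$ approximating a transcendental-trace elliptic rotation, and by a Baire-category / dimension count the set of attainable traces of elliptics in a dense finitely generated group cannot be confined to the countable set $S$.

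**Expected main obstacle.** The genuine difficulty is converting "dense" into a usable algebraic or measure-theoretic statement, since density alone does not immediately contradict "all elliptics have finite order" (the finite-order elliptic traces are dense in $[-2,2]$). The cleanest route, which I would pursue, is: fix two non-commuting elements $g,h \in \Gamma$, and look at the algebraic map $F: \SL_2(\R) \to \RR^3$, $x \mapsto (\tr(x), \tr(xg), \tr(xgh))$, whose image is (Zariski-)dense in a $3$-dimensional variety; density of $\Gamma$ makes $F(\Gamma)$ dense in this image. If all elliptics in $\Gamma$ had finite order, the preimage under the first coordinate of $(-2,2)$ would meet $\Gamma$ only in points where the first coordinate lies in the countable set $S$ — but density of $F(\Gamma)$ forces the first coordinate to take a dense (indeed "generic", co-meager) set of values in $(-2,2)$ while the constraint says it avoids an open dense set of them, which is not yet a contradiction; so the real work is an additional input — e.g. that $\Gamma$ being finitely generated and dense must contain an element conjugate to an irrational rotation because otherwise $\Gamma$ would be conjugate into $\PGL_2$ over the real algebraic numbers with all elliptic elements torsion, forcing (by a Burnside-type / bounded-torsion argument, since finitely generated) a uniform bound on the orders of elliptic elements, contradicting density (density produces elliptics of arbitrarily large finite order, and in a finitely generated linear group elements of unbounded finite order coexisting would still need care). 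I would expect to close the gap by this last observation: in a finitely generated linear group, for each fixed $N$ the elements of order dividing $N$ need not be finite in number, but combining density (unbounded elliptic orders) with the Jordan-type structure and the fact that an infinite-order elliptic is a limit of finite-order ones whose orders $\to \infty$, one produces in $\Gamma$ — via a convergence/compactness argument on the circle action — an element whose rotation number is a non-Liouville irrational realized as an actual limit inside the (closed in the relevant topology) set of rotation numbers of $\Gamma$-elements, giving the desired infinite-order elliptic. Nailing down this convergence step rigorously is where the bulk of the effort will go.
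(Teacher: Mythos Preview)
Your proposal has a genuine gap: you correctly identify the central difficulty --- that the traces of finite-order elliptic elements, namely $\{2\cos(p\pi/q)\}$, are dense in $[-2,2]$ --- and then never actually overcome it. Each of your suggested fixes (trace-field constraints, the map $F$ to $\RR^3$, Baire-category, a ``Burnside-type bounded-torsion argument'') either runs into the same obstruction or is left as a vague hope. In particular, there is no Burnside-type bound available here: a finitely generated linear group can perfectly well contain elliptic elements of unbounded finite order (e.g.\ $\SL_2(\ZZ[\zeta_n : n\in\NN])$-type constructions aside, even within your setup nothing forbids it), so the final ``convergence/compactness on rotation numbers'' idea has no footing --- a limit of group elements need not lie in the group.

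The paper's proof sidesteps all of this with a single classical input you never invoke: Selberg's Lemma. Since $\Gamma$ is finitely generated and linear, it has a torsion-free subgroup $\Gamma'$ of finite index, which is still dense in $\SL_2(\RR)$. Density of $\Gamma'$ gives some $\gamma\in\Gamma'$ with $|\tr(\gamma)|<2$, hence elliptic; torsion-freeness of $\Gamma'$ forces $\gamma$ to have infinite order. That is the entire argument. The moral is that ``finitely generated'' is being used not for any counting or algebraic-variety reason, but solely to feed into Selberg's Lemma and kill torsion.
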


\begin{proof}
    Since $\Gamma$ is finitely generated, Selberg’s Lemma \cite{selberg-lemma} implies that $\Gamma$ contains a finite-index, torsion-free subgroup $\Gamma'$ (which must be dense in $\mathrm{SL}_2(\mathbb{R})$ since $\Gamma$ is).

    Recall that elliptic elements $g$ in $\mathrm{SL}_2(\mathbb{R})$ are characterized by the trace condition $|\operatorname{tr}(g)| < 2$. Since $\Gamma'$ is dense in $\mathrm{SL}_2(\mathbb{R})$, it follows that $\{\operatorname{tr}(\gamma) :\ \gamma \in \Gamma'\}$ is dense in $\mathbb{R}$. In particular, $\Gamma'$ contains an elliptic element, which must be of infinite order since $\Gamma'$ is torsion-free.
\end{proof}

\begin{proposition}\label{prop: niceelements} If $\Gamma < G$ is an irreducible discrete subgroup, then $\Gamma$  contains two elements $\gamma_1 = (e_1, h_1)$ and $\gamma_2 = (h_2, e_2)$, where $e_i$'s (resp. $h_i$'s) are infinite-order elliptic (resp. hyperbolic) elements in $\mathrm{SL}_2(\mathbb{R})$
\end{proposition}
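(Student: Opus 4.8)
My plan is to produce one element $(e_1,h_1)\in\Gamma$ with $e_1$ an infinite-order elliptic, repairing its second coordinate if it should turn out to be parabolic rather than hyperbolic, and then to obtain $(h_2,e_2)$ by the symmetric argument. First I would reduce to the case that $\Gamma$ is finitely generated and torsion-free. Since $p_1(\Gamma)$ and $p_2(\Gamma)$ are dense in $\SL_2(\R)$ and it is known that a dense subgroup of $\SL_2(\R)$ contains a finitely generated (indeed $2$-generated) dense subgroup, I can pick finitely many elements of $\Gamma$ whose first coordinates generate a dense subgroup together with finitely many whose second coordinates do; these generate a finitely generated irreducible subgroup of $\Gamma$, and Selberg's lemma then supplies a torsion-free finite-index subgroup, still irreducible because $\SL_2(\R)$ is connected. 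The intersection $\Gamma\cap(\{1\}\times\SL_2(\R))$ is a discrete normal subgroup whose image under $p_2$ is normalised by $\overline{p_2(\Gamma)}=\SL_2(\R)$, hence is central, hence finite, hence trivial; so $p_1$ is injective on $\Gamma$ and $p_1(\Gamma)$ is torsion-free — in particular every elliptic element of $p_1(\Gamma)$ has infinite order.

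Applying \Cref{lem:elliptic} to $p_1(\Gamma)$ gives $\gamma_1=(e_1,h_1)\in\Gamma$ with $e_1$ elliptic, necessarily of infinite order. Since $\gamma_1$ has infinite order and $\Gamma$ is discrete, $\gamma_1^n\to\infty$ in $G$; because the powers $e_1^n$ all lie in the compact group $\overline{\langle e_1\rangle}\cong\SO(2)$, the powers $h_1^n$ must be unbounded in $\SL_2(\R)$, so $h_1$ is hyperbolic or parabolic. If $h_1$ is hyperbolic we already have an element of the required form.

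Suppose then that $h_1$ is parabolic, with fixed point $\xi\in\S^1$. Using density of $p_2(\Gamma)$ I would choose $\delta=(u,v)\in\Gamma$ with $v\xi\neq\xi$ and examine the family $\gamma_1^n\delta=(e_1^nu,\,h_1^nv)$, $n\in\Z$. On the second coordinate, $\tr(h_1^nv)$ is affine in $n$ with nonzero slope — this is exactly where $v\xi\neq\xi$ is used — so $h_1^nv$ is hyperbolic for all large $|n|$. On the first coordinate, $\{e_1^nu:n\in\Z\}$ is dense in the coset $\overline{\langle e_1\rangle}\,u$, and a one-line computation shows the trace function $c\mapsto\tr(cu)$ on $\overline{\langle e_1\rangle}$ attains the value $0$, so this coset meets the open set of elliptic elements; hence $\{n:e_1^nu\ \text{is elliptic}\}$ is infinite, and by torsion-freeness each such $e_1^nu\in p_1(\Gamma)$ automatically has infinite order. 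Intersecting this infinite set of $n$ with the cofinite set of $n$ for which $h_1^nv$ is hyperbolic, I obtain $n_0$ with $\gamma_1^{n_0}\delta$ of the desired shape. The element $\gamma_2=(h_2,e_2)$ is produced by running the same argument with the two $\SL_2(\R)$ factors interchanged.

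I expect the parabolic repair to be the main obstacle: the element supplied by \Cref{lem:elliptic} may perfectly well be parabolic in the second coordinate, and the point is that a single "transverse" correction $\delta$ simultaneously pushes the second coordinate into the hyperbolic range for all large exponents while keeping the first coordinate elliptic on a whole arc — hence a positive-density set — of exponents. The one other point requiring some care is the reduction to a finitely generated, torsion-free $\Gamma$, since it is this that makes \Cref{lem:elliptic} applicable and makes "infinite order" automatic for the elliptic elements that appear.
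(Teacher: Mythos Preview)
Your proof is correct. The overall architecture matches the paper's: obtain $(e_1,h_1)$ from \Cref{lem:elliptic}, observe by discreteness that $h_1$ is hyperbolic or parabolic, repair the parabolic case, then repeat with the factors swapped. Where you diverge is in the parabolic repair. The paper conjugates $\gamma_1$ by an element of $\Gamma$ whose second coordinate moves the parabolic fixed point, and then runs a ping-pong argument on the second coordinates to produce a free product $\langle h_1^n,(h_1')^n\rangle$ in which parabolics are confined to conjugates of the two generators; an appropriate word in this free product is then hyperbolic in the second factor and elliptic in the first. Your route is more direct: you multiply by a single transverse element $\delta=(u,v)$ with $v\xi\neq\xi$, note that $\tr(h_1^n v)$ is an affine function of $n$ with nonzero slope (so $h_1^n v$ is hyperbolic for all large $|n|$), and that $e_1^n u$ lands in the elliptic range for an infinite set of $n$ because $\overline{\langle e_1\rangle}$ is a circle on which the trace function attains $0$. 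This avoids the ping-pong step entirely and is arguably cleaner. Two small points worth tightening: if $\tr h_1=-2$ the trace $\tr(h_1^n v)$ is not literally affine in $n$, but replacing $\gamma_1$ by $\gamma_1^2$ fixes this immediately; and the claim that a dense subgroup of $\SL_2(\R)$ contains a finitely generated dense subgroup, while true, deserves a one-line justification (e.g.\ via the Margulis lemma, two noncommuting elements of $\Gamma$ sufficiently close to the identity, together with a Zariski dense finite subset, generate a nondiscrete Zariski dense, hence dense, subgroup). The paper itself does not make this reduction explicit and appears to use finite generation from context.
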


\begin{proof}
Using the fact that the projection of $\G$ in the first factor of $G$ is dense, it follows by \Cref{lem:elliptic} that $\G$ contains an element $\g_1 = (e_1,h_1)$, where $e_1\in\SL_2(\R)$ is an infinite order elliptic element.
Since $\G$ is discrete in $G$, $\<h_1\>$ is an infinite discrete cyclic subgroup of $\SL_2(\R)$; so, $h_1$ must be a parabolic or hyperbolic element. We show that $h_1$ can be taken to be a hyperbolic element: indeed, suppose that $h_1$ is parabolic. Consider the group $\Delta$ generated by $\g_1$ and a conjugate $\g_1' = (e_1',h_1')$ of $\g_1$ by an element of $\G$ of the form $(g_1,g_2)$, where 
$g_2$ is an infinite order elliptic element. Thus, $g_2 x \ne x$, where $x$ denotes the unique fixed point of $h_1$ in $S^1$. Applying the classical ping-pong argument, it is easily seen that for all large enough $n\in\N$, we have the following:
\begin{enumerate}
    \item $\<h_1^n,h_1'^n\>$ is a discrete subgroup of $\SL_2(\R)$, freely generated by $h_1^n$ and $h_1'^n$.
    \item The parabolic elements of $\<h_1^n,h_1'^n\>$ can be conjugated into $\<h_1\>$ and $\<h_1'\>$.
\end{enumerate}
Fix such an $n$. Considering the traces, it can be seen that $e_1^n(e_2^n)^{k}$ is elliptic for infinitely many values of $k\in\N$. Let $k$ be such a number so that $e_1^n(e_2^n)^{k}$ is elliptic (which is again of infinite order). Then, for the element $(e_1^n(e_2^n)^{k}, h_1^n(h_2^n)^{k})\in \G$, the first component is elliptic, whereas the second one is hyperbolic (due to conditions (i) and (ii) above).

Similarly, $\G$ contains an element $\g_2 = (h_2,e_2)$, where $h_2$ (resp. $e_2$) are infinite order hyperbolic (resp. elliptic) elements of $\SL_2(\R)$.    
\end{proof}

We remark that any discrete subgroup $G$ generated by two elements $\gamma_1$ and $\gamma_2$ of the form given by \Cref{prop: niceelements} is necessarily irreducible.

We can now start the proof of Corollary \ref{irreducibleergodic}.

\begin{proof}[Proof of  Corollary \ref{irreducibleergodic} Item (i)]

Suppose, to the contrary, that $\G$ is a discrete, irreducible, hyperbolic subgroup of $G$. Since $\G$ is finitely generated, by applying Selberg's Lemma, we can assume that $\Gamma$ and its projections to each factor of $G$ are torsion-free.

Clearly, no powers of $\g_1$ and $\g_2$ are conjugate in $\G$ (since these are not conjugate in $\SL_2(\R)$).
Since $\G$ is a torsion-free hyperbolic group, again by a ping-pong argument, it can be seen that for all large enough $n\in\N$, the natural homomorphism $\rho: \<\g_1^n\> *\<\g_2^n\> \to \G$ is injective. Note that the image of $\rho$ is also irreducible in $G$.

Thus, the proof now reduces to case when $\G$ is the free group $F_2$ generated by $\gamma_1$ and $\gamma_2$, as in \Cref{prop: niceelements}.

In this case $\g_1^2$ and $\g_2^2$ also generate an irreducible discrete subgroup $F$ of $G$, which by assumption acts ergodically, which moreover does not contain the element $\g_1\g_2$. Observe that $\g_1^2$ and $\g_2^2$ and $g_1g_2$ generate the free group $F_3$.

It is easy to see that any finite index subgroup $F$ contains some powers of $\g_1$ and $\g_2$ and, hence, and so by our assumption on irreducible groups acting ergodically, we have that $F$ acts strongly irreducibly. 
    
Thus, by \Cref{free-product}, the natural homomorphism
    \[
     \rho: F* \<g_1g_2\> \to G
    \]
    cannot be discrete and faithful, which gives us a contradiction.
\end{proof}

Before proving Item (ii), we need the following:

\begin{lemma}\label{prop:HtimesR}
  Let $\Gamma$ be a torsion-free non-elementary hyperbolic group, which embeds discretely as a subgroup of
  $\SL_2(\R)\times P$, where $P$ is a solvable Lie group. Then the projection of $\Gamma$ to the $\SL_2(\R)$-factor is discrete and faithful.

  In particular, $\Gamma$ is either a free group or a surface group.
\end{lemma}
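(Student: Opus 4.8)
The plan is to prove two claims and then invoke a classical classification: writing $\pi_1\colon\SL_2(\R)\times P\to\SL_2(\R)$ for the projection, (a) $\pi_1|_\Gamma$ is injective, (b) $\pi_1(\Gamma)$ is discrete in $\SL_2(\R)$, and (c) a finitely generated torsion-free discrete subgroup of $\SL_2(\R)$ is a free group or a closed surface group.

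For (a), I would consider $N\coloneqq\ker(\pi_1|_\Gamma)=\Gamma\cap(\{e\}\times P)$. Since $\Gamma$ is discrete in the product, $N$ is discrete in $P$, and as a subgroup of a solvable Lie group it is solvable, hence contains no nonabelian free subgroup, hence is elementary (finite or virtually cyclic) as a subgroup of the hyperbolic group $\Gamma$. But $N$ is normal in $\Gamma$, and an infinite normal subgroup of a non-elementary hyperbolic group is itself non-elementary: its limit set is a nonempty closed $\Gamma$-invariant subset of $\partial\Gamma$, hence all of $\partial\Gamma$ by minimality of the boundary action, hence infinite --- impossible for an elementary group. So $N$ is finite, and torsion-freeness gives $N=\{e\}$.

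For (b), suppose $\Delta\coloneqq\pi_1(\Gamma)\cong\Gamma$ is not discrete, so $\overline\Delta$ is a positive-dimensional closed subgroup of $\SL_2(\R)$. Now $\Gamma$ is non-elementary hyperbolic, so it contains a copy of $F_2$ and is not virtually solvable; on the other hand every positive-dimensional proper closed subgroup $H\le\SL_2(\R)$ is solvable --- it normalizes its identity component $H^0$, which is a one-parameter subgroup or a Borel subgroup, and the normalizer in $\SL_2(\R)$ of such a subgroup is solvable. Hence $\overline\Delta=\SL_2(\R)$, i.e. $\Delta$ is dense. By (a), $\Gamma$ is the graph $\{(g,f(g)):g\in\Delta\}$ of a homomorphism $f\colon\Delta\to P$, and $M\coloneqq\ker f$, viewed in $\SL_2(\R)$ via the first coordinate, equals $\Gamma\cap(\SL_2(\R)\times\{e\})$, hence is discrete. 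Since $M$ is normal in the dense subgroup $\Delta$, its closure is normal in $\SL_2(\R)$; the normal subgroups of $\SL_2(\R)$ being $\{e\}$, $\{\pm I\}$, and $\SL_2(\R)$ (because $\PSL_2(\R)$ is simple), discreteness of $M$ forces $M\subseteq\{\pm I\}$, so $M=\{e\}$ by torsion-freeness. Then $f$ is injective, so $\Delta\cong f(\Delta)\le P$ is solvable, contradicting that $\Delta$ is non-elementary hyperbolic. Therefore $\pi_1(\Gamma)$ is discrete.

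Combining (a) and (b), $\Gamma$ is isomorphic to a discrete torsion-free subgroup of $\SL_2(\R)$, finitely generated since $\Gamma$ is hyperbolic; being torsion-free it avoids $-I$, so it is a torsion-free Fuchsian group. By the classification of finitely generated Fuchsian groups, $\HH^2/\Gamma$ has finite topological type, and $\Gamma$ is a closed surface group when this quotient is compact and a free group otherwise. I expect step (b) to be the main obstacle --- ruling out the projection being a (necessarily dense) non-discrete subgroup --- with the crucial inputs being the short lists of closed subgroups and of normal subgroups of $\SL_2(\R)$, together with the fact that the closure of a normal subgroup of a dense subgroup is again normal; steps (a) and (c) are routine given standard facts about hyperbolic and Fuchsian groups.
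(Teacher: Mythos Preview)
Your proof is correct and uses the same core ideas as the paper, though organized differently. You establish faithfulness of $\pi_1|_\Gamma$ first and discreteness second; the paper does the reverse, first showing discreteness by looking at $N\coloneqq\ker(\mathrm{pr}_2|_\Gamma)$ (your $M$), which is discrete, nonelementary, and normal in $\pi_1(\Gamma)$, and then arguing that the identity component $H$ of $\overline{\pi_1(\Gamma)}$ must centralize this discrete normal subgroup (a connected group conjugating a discrete set acts trivially), which is impossible unless $H$ is trivial. Your route to discreteness inserts an extra step---first showing $\overline{\pi_1(\Gamma)}=\SL_2(\R)$ via the classification of proper closed subgroups---and then runs the same ``discrete normal implies central'' principle inside $\SL_2(\R)$ rather than inside $H$. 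Both arguments ultimately rest on the fact that a discrete normal subgroup of a connected Lie group is central, so the difference is purely organizational; the paper's version is slightly shorter since it avoids the density detour.
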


\begin{proof}
Let
\[
\rho: \Gamma \rightarrow \SL_2(\R)\times P
\]
be a discrete and faithful homomorphism.
Let ${\rm pr}_1$ and ${\rm pr}_2$ denote the projections of $\SL_2(\R)\times P$ onto the first and second factors, respectively. We will identify $\Gamma$ with its image $\rho(\Gamma)$.

Set
\[
N \coloneqq \ker\big({\rm pr}_2|_{\Gamma}\big) \triangleleft \Gamma.
\]
Since $\Gamma$ is discrete in $\SL_2(\R)\times P$, the subgroup $N \subset \SL_2(\R)\times\{e\}$ is
also discrete. In particular, ${\rm pr}_1|_N : N \to \SL_2(\R)$ is discrete and faithful.

Because $P$ is solvable, the quotient $\Gamma/N \cong {\rm pr}_2(\Gamma)$ is
solvable. Since $\Gamma$ is not solvable, $N$ must be non-trivial, hence infinite, and hence nonelementary.
Thus ${\rm pr}_1(N)$ is a nonelementary discrete subgroup of $\SL_2(\R)$.

Since ${\rm pr}_1(N)$ is normal in
${\rm pr}_1(\Gamma)$, ${\rm pr}_1(\Gamma)$ must be discrete in $\SL_2(\R)$; otherwise, the identity component $H$ of its closure would have positive dimension, would centralize ${\rm pr}_1(N)$ (by discreteness), and this would be impossible since ${\rm pr}_1(N)$ is a nonelementary discrete subgroup of $\SL_2(\R)$.

Now consider
\[
K := \ker\big({\rm pr}_1|_{\Gamma}\big) \triangleleft \Gamma.
\]
Since $K$ injects into $P$, $K$ is solvable.
Since $\Gamma$ is a torsion-free nonelementary hyperbolic group, any nontrivial normal subgroup of $\Gamma$ is nonsolvable. Thus $K$ must be trivial, so ${\rm pr}_1|_{\Gamma}$ is faithful.
\end{proof}

Now we return to the proof of  Corollary \ref{irreducibleergodic}, item (ii). We will essentially show that if $\Gamma$ is a hyperbolic discrete subgroup of $G$, which is virtually neither a free group nor a surface group, then $\Gamma$ must be irreducible.

\begin{proof}[Proof of  Corollary \ref{irreducibleergodic} Item (ii)]
    Suppose, to the contrary, that there exists a hyperbolic group $\Gamma$, which is virtually neither a free group nor a surface group, such that it admits a discrete and faithful homomorphism
    \[
     \rho: \Gamma \to G.
    \]
    Using Selberg's Lemma, we may assume that $\Gamma$ is torsion-free.
    We also will identify $\Gamma$ with its image $\rho(\Gamma)$.\
    
    Note that the projections of $\G$ into the any of the factors of $G$ cannot be bounded, since otherwise, due to discreteness in $G$, $\Gamma$ would act properly discontinuously on $\mathbb{H}^2$ through the projection into the other factor -- this would be impossible since then $\Gamma$ would be a free group or a surface group.
    Moreover, these images cannot also lie in a parabolic subgroup of $\SL_2(\R)$, as rulled out by \Cref{prop:HtimesR}.
    Thus, we can assume that the projection of $\Gamma$ to each factor is Zariski dense.
    Thus, we have the following three mutually exclusive possibilities:

    \medskip\noindent
    {\bf Case 1.} Suppose that the image of $\Gamma$ in each factor is discrete. In this case,  $\Gamma$ virtually embeds into a direct product of two torsion-free finitely generated discrete subgroups of $\SL_2(\R)$.
    Since every hyperbolic group is finitely presented \cite{gromov-hyperbolic}, $\Gamma$ is of type $\mathrm{FP}_2$ and thus by \cite[Thm. A]{MR1934013}, $\Gamma$ is virtually a direct product of at most two surface or free groups. Since $\Gamma$ does not contain any $\Z^2$, it follows that $\Gamma$ is either a surface group or a free group, which is a contradiction.

    \medskip\noindent
    {\bf Case 2.} Suppose that $\Gamma$ has a discrete projection in one factor, say the first, and an indiscrete projection in the other (i.e., the second). 
    Since the projection of $\G$ to the second factor is also Zariski dense, this projection is (topologically) dense.

    Let $N$ denote the kernel of the projection of $\Gamma$ into the first factor. Since $\G$ is discrete in $G$, $N$ acts properly discontinuously on the hyperbolic plane $\mathbb{H}^2$ through the projection into the second factor. Since the image of $\Gamma$ is dense in the second factor, we see that the normalizer of $N$, where $N$ is viewed as a discrete subgroup of the second factor through the projection, is dense in $\SL_2(\R)$. But the normalizer of a discrete subgroup of $\SL_2(\R)$ must be closed. Hence, $N$ must be a normal subgroup of $\SL_2(\R)$. So, $N$ is central (hence trivial since $\Gamma$ is torsion-free) and, consequently, $\Gamma$ embeds discretely in $\mathrm{SL}_2(\mathbb{R})$ through the first projection, which is impossible.

    \medskip\noindent
    {\bf Case 3.}
    Suppose that $\Gamma$ to both factors of $G$ are indiscrete. Since the two projections are Zariski dense, both projections are topologically dense and hence $\Gamma$ is irreducible. Thus, if all irreducible discrete subgroups of $G$ were to act ergodically act on $\S^1\times \S^1$, then by Item (i) of Corollary \ref{irreducibleergodic}, $\Gamma$ cannot be hyperbolic, again a contradiction.
\end{proof}

\begin{proof}[Proof of  Corollary \ref{irreducibleergodic} Item (iii)]
 If $M$ is closed, then one can argue as in (ii) to obtain the result. In general, it is known that $\pi_1(M)$ contains a plethora of quasifuchsian surface groups \cite{kahn-markovic,cooper-futer,kahn-wright}. Hence one can find a subgroup isomorphic to $\pi_1(S) * \mathbb{Z}$ inside $\pi_1(M)$, where $S$ is a closed hyperbolic surface, and the conclusion then follows from (ii).
\end{proof}

\section{Proofs of \Cref{diagonallattice} and \Cref{hyperbolictraces}}

\subsection{Proof of \Cref{diagonallattice}}

If $\Sigma$ is a lattice in a semi-simple Lie group with finite center, the Howe-Moore ergodicity implies that $\Sigma$ acts ergodically on $G/B \times G/B$ as  this space is isomorphic to $G/A$, where $A$ is the Cartan subgroup. Therefore, $\Delta(\Sigma)$ acts ergodically on $G/B\times G/B = (G\times G)/(B \times B)$. Therefore Theorem \ref{main} implies \Cref{diagonallattice}.

\subsection{Proof of \Cref{hyperbolictraces}}
If $\Gamma$ is a lattice in $G = {\rm SO}(n,1) = \text{Isom}(\mathbb{H}^n)$, then $\Gamma$ is Zariski dense on $G$. In fact, since the subgroup $\Gamma_0$ acts ergodically on the boundary $\mathbb{S}^n$ of  $G$, it has a full limit set in $\mathbb{S}^n$, and hence $\Gamma_0$ is also Zariski dense in $G$.
In this situation, if $\Gamma_0$ has integral (resp. algebraic) traces, then \Cref{thm:inttrace} (resp. \Cref{thm:trace}), together with \Cref{rem:Zdense_trace}, shows that $\Gamma_0$ has integral (resp. algebraic) traces.

\appendix 

\section{A geometric proof of \Cref{diagonallattice} in the cocompact case}\label{geomproof}

In this section, we provide a geometric proof of part of \Cref{diagonallattice} in the case where the lattice is cocompact. This proof is more geometric, and does not use boundary maps.

Let $G$ denote a connected real semi-simple Lie group with finite center and without compact factors. We let $K$ be a maximal compact subgroup, let $A$ be a Cartan subgroup and $P$ a minimal parabolic subgroup containing $A$, and let $N$ be the unipotent radical of $P$ so $P = AN$. We have the Iwasawa decomposition $G = KP$. Let $\Lambda \subset G$ be a cocompact lattice and let $\Delta: G \to G \times G$ be the diagonal embedding. 

\begin{theorem}
Let $\Gamma$ be a Zariski dense discrete subgroup of $G \times G$ that contains $\Delta(\Lambda)$, then $\Gamma$ is not isomorphic to a non-trivial free product.
\end{theorem}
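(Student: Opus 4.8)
The plan is to argue by contradiction, combining Bass--Serre theory with the geometry of the totally geodesic diagonal $\Delta(X)\subset X\times X$, where $X=G/K$ is the symmetric space of $G$; the whole point of the appendix is to replace the measurable boundary map of Lemma~\ref{map} (which powers Theorem~\ref{thm:bounded}) by honest geometry of $X\times X$. First, by Selberg's lemma I would replace $\Gamma$ by a finite-index torsion-free subgroup; it is still Zariski dense and still contains a diagonal cocompact lattice, and since a torsion-free group is a nontrivial free product iff it has infinitely many ends (Stallings) while $\Gamma$ is not virtually cyclic (it contains $\Delta(\Lambda)$), it suffices to rule out that such a torsion-free $\Gamma$ equals a nontrivial free product $\Gamma=\Gamma_1*\Gamma_2$. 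Then $\Gamma$ acts on its Bass--Serre tree $T$ with trivial edge stabilizers and no global fixed point. Since $\dim X\ge 2$ and $\Lambda$ is cocompact, $M:=\Lambda\backslash X$ is a closed aspherical manifold of dimension $\ge 2$, so $\Delta(\Lambda)\cong\pi_1(M)$ is a torsion-free Poincar\'e duality group of dimension $\ge 2$; such a group is one-ended and freely indecomposable, hence fixes a vertex of $T$. Therefore $\Delta(\Lambda)$ lies in a conjugate of a free factor, and after conjugating the splitting I may assume $\Delta(\Lambda)\le\Gamma_1$ with $\Gamma_2\ne 1$.

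Next I would extract structural consequences and reduce to a clean model problem. The subgroup $\Gamma\cap\Delta(G)$ is discrete in $\Delta(G)\cong G$ and contains the lattice $\Delta(\Lambda)$, hence is itself a cocompact lattice in $\Delta(G)$, of finite index over $\Delta(\Lambda)$; being a lattice it is one-ended, so Kurosh's theorem forces it to be conjugate into a factor, necessarily into $\Gamma_1$. In particular $\Gamma_2\cap\Delta(G)=1$ and all ``diagonal'' elements of $\Gamma$ already lie in $\Gamma_1$. Now the connected algebraic subgroups of $\mathbf G\times\mathbf G$ containing $\Delta(\mathbf G)$ are exactly the ``partial diagonals'' $D_S$ indexed by subsets $S\subseteq\{1,\dots,r\}$ of the simple factors; since $\Gamma$ is Zariski dense and a connected algebraic group is not a finite union of proper subvarieties, there is $\gamma\in\Gamma_2\setminus\{1\}$ with $\langle\Delta(\Lambda),\gamma\rangle$ Zariski dense in $\mathbf G\times\mathbf G$ (the ancillary case in which $\Gamma_2$ is too small to produce such a $\gamma$ is disposed of by the index argument above: then $\Gamma_1$ is Zariski dense, and $\Gamma\cap\Delta(G)$ contains $\Delta(\Lambda)*(\Gamma_2\cap\Delta(G))$ or a conjugate thereof, which has infinite index over $\Delta(\Lambda)$ — a contradiction, or else one replaces $\gamma$ by a suitable $\Gamma_1$-conjugate of an element of $\Gamma_2$). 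As $\gamma$ and $\Delta(\Lambda)$ lie in distinct free factors, $\langle\Delta(\Lambda),\gamma\rangle=\Delta(\Lambda)*\langle\gamma\rangle$ with $\gamma$ of infinite order, so the theorem reduces to showing: \emph{$G\times G$ contains no discrete Zariski-dense subgroup $\Xi$ admitting an isomorphism $\Xi\cong\Lambda*\mathbf Z$ carrying the $\Lambda$-factor onto $\Delta(\Lambda)$} (and, by the previous observations applied to $\Xi$, with $\Xi\cap\Delta(G)=\Delta(\Lambda)$).

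Finally comes the geometric heart. Assume such a $\Xi=\Delta(\Lambda)*\langle\gamma\rangle$ acts freely and properly discontinuously on $Y:=X\times X$. Here $\Delta(X)\subset Y$ is a closed convex totally geodesic copy of $X$ on which $\Delta(\Lambda)$ acts cocompactly; I would use the CAT$(0)$ nearest-point projection $\pi\colon Y\to\Delta(X)$ (which is $1$-Lipschitz and $\Delta(\Lambda)$-equivariant) together with a compact $D\subset\Delta(X)$ satisfying $\Delta(\Lambda)D=\Delta(X)$, and the coarsely $\Xi$-equivariant map $Y\to T_\Xi$ to the Bass--Serre tree of $\Lambda*\mathbf Z$. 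Since $\Delta(\Lambda)$ is $\Delta(X)$-cocompact and fixes a vertex of $T_\Xi$, the set $\Delta(X)$ maps into a bounded neighbourhood of that vertex, so by proper discontinuity only the translates $w\Delta(X)$ with $w\in\Delta(\Lambda)$ meet any fixed bounded neighbourhood of $D$. Passing to the visual compactification $\partial Y=\partial X*\partial X$, a Klein--Maskit-type combination argument should then show that the $\Xi$-translates of the limit set of $\Delta(\Lambda)$ — the totally geodesic $(\dim X-1)$-sphere $\partial\Delta(X)$ — bound pairwise disjoint ``half-space'' regions in $\partial Y$ indexed by $\Xi/\Delta(\Lambda)$. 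On the other hand, Zariski-density of $\Xi$ produces a regular loxodromic element of $\Xi$ whose attracting and repelling directions in $\partial Y$ are separated by none of these spheres, contradicting the disjointness.

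The step I expect to be genuinely hard is the last one: turning ``$\Delta(\Lambda)$ acts cocompactly on the half-dimensional totally geodesic diagonal'' into an actual obstruction — making precise that this piece is too large to admit disjoint translates under the transverse loxodromic forced by Zariski density — while staying purely geometric and not covertly re-deriving the ergodicity/boundary-map input behind Theorem~\ref{thm:bounded}. A secondary nuisance is the Zariski-density bookkeeping in the reduction step needed to locate the free generator inside $\Gamma_2$ rather than merely inside $\Gamma$, and the verification that the combination argument on $\partial Y$ produces honestly disjoint (not merely coarsely separated) domains.
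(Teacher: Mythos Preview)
Your reduction to the model case $\Xi=\Delta(\Lambda)*\langle\gamma\rangle$ with $\gamma$ of infinite order is fine and matches the paper's opening move (the paper does it in one sentence rather than via Bass--Serre/Kurosh, but the content is the same). The difficulty is entirely in what you call the geometric heart, and there the gap is real, not a missing detail.

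The limit set $\partial\Delta(X)$ is a $(\dim X-1)$-sphere sitting inside $\partial Y\cong S^{2\dim X-1}$; since $\dim X\ge 2$, it has codimension $\dim X\ge 2$ and does \emph{not} separate $\partial Y$. There are no ``half-space regions'' for it to bound, so the Klein--Maskit combination picture you sketch cannot even be formulated --- already for $G=\SL_2(\R)$ you are looking at a circle inside $S^3$. Your preliminary claim that only $\Delta(\Lambda)$-translates of $\Delta(X)$ meet a fixed neighbourhood of $D$ is likewise unjustified: nothing forbids $\gamma_2^{-1}\gamma_1$ from fixing a point of $X$, and when it does, $\gamma\,\Delta(X)$ actually intersects $\Delta(X)$. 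You were right to flag this step as the hard one; as stated it does not work.

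The paper's argument is completely different and short. Using Zariski density one chooses the free generator $(\gamma_1,\gamma_2)$ together with a flag $kP\in G/B$ satisfying $\gamma_1 kP=\gamma_2 kP$ (Proposition~\ref{choice}); write $\gamma_i k=k'p_i$ with a common $k'\in K$ and $p_i\in P=MAN$. Pick $a\in A$ with $a^t n a^{-t}\to 1$ for all $n\in N$. By cocompactness of $\Lambda$ there are $\lambda_t,\lambda'_t\in\Lambda$ tracking $a^t k$ and $a^t k'$ up to a fixed compact set $C$. A two-line computation (Proposition~\ref{compact}) shows that each coordinate $(\lambda'_t)^{-1}\gamma_i\lambda_t$ has the form (bounded)$\cdot(a^t p_i a^{-t})\cdot$(bounded), hence stays in a fixed compact set independent of $t$. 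Thus the diagonal conjugates $\Delta(\lambda'_t)^{-1}(\gamma_1,\gamma_2)\Delta(\lambda_t)$ take only finitely many values in the discrete group $\Gamma$, while $\lambda_t,\lambda'_t\to\infty$; two parameters $s\ne t$ with the same value produce a nontrivial relation between $(\gamma_1,\gamma_2)$ and $\Delta(\Lambda)$, contradicting the free product. The right geometric input is not separation at infinity but that $(\gamma_1,\gamma_2)$ can be forced to send a \emph{diagonal} boundary point to a \emph{diagonal} boundary point, after which recurrence of the $A$-flow on $\Lambda\backslash G$ manufactures the relation.
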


\begin{proof}
The proof is by contradiction. We assume that $\Gamma$ is isomorphic to a free product, and by considering an infinite order element not in $\Delta(\Lambda)$, and possibly replacing $\Gamma$ with a subgroup, we can assume that  $\Gamma = \Delta(\Lambda) * \mathbb{Z}$.

We will use the following elementary proposition which follows from the Zariski density of $\Gamma$. 

\begin{proposition}\label{choice}
There exists $(\gamma_1,\gamma_2) \in \Gamma$ such that:

\begin{enumerate}
\item The group generated by $\Delta(\Lambda)$ and $(\gamma_1,\gamma_2)$ is naturally isomorphic to $\Delta(\Lambda) * \Z$ 
\item There exists $kP \in G/B$, where $k\in K$, such that $\gamma_1 kP = \gamma_2 kP$ 
\end{enumerate}

\end{proposition}

Choose $(\gamma_1, \gamma_2)$ and $kP$ as in Proposition \ref{choice}. We write $\gamma_i k = k' p_i$ for some $k' \in K$ and $p_1, p_2 \in P$.
Take  $a$ to be an element of $A$ so that $N$ is the stable horospherical subgroup of $P$, that is  $$N = \{ n \in G, \text{ such that } \lim_{t \to \infty} d(a^tna^{-t}, 1) = 0 \}.$$
Because of our assumption that $\Lambda$ is a uniform lattice, there exists a compact set $C \subset G$, such that for every $x \in G$ the intersection $\Gamma \cap Cx \neq \emptyset$. 

Therefore for every $t > 0$, there exists $c_t \in C$ and $\lambda_t \in \Lambda$ such that $c_ta^tk = \lambda_t$, and similarly there exists $c'_t \in C$ and $\lambda'_t$ such that $c'_ta^t.k' = \lambda'_t$.

\begin{proposition}\label{compact}
There exists a compact set $C_1 \subset G \times G$, such that for every $t>0$, the element $(\lambda'_t, \lambda'_t)^{-1} (\gamma_1, \gamma_2) (\lambda_t, \lambda_t) \in C_1$.
\end{proposition}

\begin{proof}

We have the following equation:

\begin{align*}
(\lambda'_t)^{-1} \gamma_i \lambda_t &=  (k_1a^{-t}(c'_t)^{-1})^{-1} \gamma_i (ka^{-t}c_t^{-1}) \\
&=   c'_t a^t (k_1^{-1}\gamma_ik) a^{-t} c_t^{-1}\\
&=   c'_t (a^t p_i a^{-t}) c_t
\end{align*}

Observe that by our choice of $a$, the element $a^t p_i a^{-t}$ lies in a bounded set independent of $G$ and as $c_t, c'_t$ lie in the compact set $C$, we are done.

\end{proof}

The discreteness of $\Gamma$ implies that  the set of elements $$S := \{ (\lambda'_t, \lambda'_t)^{-1} (\gamma_1, \gamma_2) (\lambda_t, \lambda_t) \text{ for } t > 0\}$$ in \ref{compact} is a finite collection of elements in $
\Gamma$, but as $\lambda_t, \lambda'_t$ diverge in $G$ as $t \to \infty$, there exists $s,t >0$, such that $\lambda_s \neq \lambda_t$ and $\lambda'_s \neq \lambda'_t$ such that
$$  (\lambda'_t, \lambda'_t)^{-1} (\gamma_1, \gamma_2) (\lambda_t, \lambda_t) =  (\lambda'_s, \lambda'_s)^{-1} (\gamma_1, \gamma_2) (\lambda_s, \lambda_s), $$
which contradicts that  $\Delta(\Lambda)$ and $(\gamma_1, \gamma_2)$ generate a free product.
\end{proof}

\section{Some connections between irreducible groups and diophantine properties of Lie groups}

The main point of this appendix is to show that proving certain irreducible groups are discrete is at least as hard as constructing new examples of dense subgroups satisfying a non-abelian diophantine property.

Let $G$ be a semi-simple Lie group, and $\Gamma \subset G$, a finitely generated dense subgroup, let $S$ be a finite generating set of $\Gamma$. Fix $d$ to be a left invariant metric in $G$, and let $B_G(g,r)$ denote the metric ball centered at $g \in G$ of radius $r\geq 0$.

\begin{definition} We say that $\Gamma$ is \emph{strongly diophantine} if there exists $c_1, c_2>0$ such that $B_G(1_G,c_1e^{-c_2n} )\cap S^n = \{1_G\}$. We say that $\Gamma$ is \emph{weakly diophantine} if there exists $c_1, c_2 >0$
$$\limsup_{n \to \infty} \frac{\log |B_G(1_G,c_1e^{-c_2n} )\cap S^n | }{ \log |S^n| }  < 1$$
\end{definition}

Observe that this definition is independent of the generating set $S$. If $S$ consists of elements with algebraic entries (under some faithful representation of $G$), then $\Gamma$ is strongly diophantine.\\ 

If $G$  is compact and simple, Benoist-DeSaxce \cite{benoist2016spectral} (following work of Bourgain and Gamburd for ${\rm SU}(n)$ \cite{bourgain2008spectral, bourgain2012spectral}) have shown that being almost-diophantine (a condition related to being weakly-diophantine) is equivalent to having a spectral gap for the averaging operator $T: L_0^2(G) \to L_0^2(G)$ given by $$T(f)= \frac{1}{|S|} \sum_{s \in S} f\circ s$$

The existence of the spectral gap for this operator (i.e. $\|T\|_{op} <1$) has been conjectured by Sarnak to hold in the case when $S = S^{-1}$ and the generators of $S$ are Haar generic, but the only known examples (as far as we know) are when $S$ consists of elements with algebraic entries (up to conjugation).\\

Consider the following irreducible subgroup:\\

Let $\theta \in [0,2\pi]$ be an irrational multiple of $2\pi$, and $\lambda >0$, define the matrices

\begin{align*}
R_{\theta} := \begin{bmatrix} cos(\theta) & -sin(\theta) \\ sin(\theta) & \cos(\theta)
\end{bmatrix} \ \ \text{and} \ \ 
A_{\lambda} :=  \begin{bmatrix} e^{\lambda} & 0 \\ 0 & e^{-\lambda}
\end{bmatrix}   
\end{align*}

Define the subgroup of $\SL_2(\RR)\times \SL_2(\RR)$ $$\Gamma_{\theta, \lambda} := \langle R_{\theta} \times A_{\lambda}, A_{\lambda} \times R_{\theta} \rangle$$ 

Let $\Delta_{\theta, \lambda}$ be the projection of $\Gamma_{\theta, \lambda}$ in the first factor (or second factor, as they are the same). 

The following holds:

\begin{proposition} For every $\theta \in [0,2\pi]$ irrational multiple of $\pi$, and $\lambda >0$, the group  $\Gamma_{\theta, \lambda}$ is an irreducible subgroup of $\SL_2(\RR) \times \SL_2(\RR)$. Moreover for Lebesgue a.e. $\theta, \lambda$, the group  $\Gamma_{\theta, \lambda}$ is free.
\end{proposition}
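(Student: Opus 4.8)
Here is the plan I would follow.

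The claim falls into two independent parts: irreducibility (for \emph{every} irrational multiple $\theta$ of $\pi$ and \emph{every} $\lambda>0$) and freeness (for Lebesgue-a.e.\ $(\theta,\lambda)$). For irreducibility the point is that both coordinate projections of $\Gamma_{\theta,\lambda}$ to $\SL_2(\RR)$ equal the single subgroup $\Delta_{\theta,\lambda}=\langle R_\theta,A_\lambda\rangle$, so it suffices to show this subgroup is dense. I would argue: since $\theta/\pi\notin\QQ$, the element $R_\theta$ has infinite order, so $\langle R_\theta\rangle$ is an infinite, hence dense, subgroup of the circle $\SO(2)$; thus the closure $H$ of $\Delta_{\theta,\lambda}$ contains $\SO(2)$. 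As $H$ also contains the hyperbolic element $A_\lambda$, which is not a rotation, $H$ strictly contains $\SO(2)$. But the only proper connected closed subgroup of $\SL_2(\RR)$ containing $\SO(2)$ is $\SO(2)$ itself, and $\SO(2)$ is its own normalizer; since the identity component $H^\circ$ is normal in $H$ and contains $\SO(2)$, either $H^\circ=\SL_2(\RR)$ (so $H=\SL_2(\RR)$), or $H^\circ=\SO(2)$ and then $H\subseteq N(\SO(2))=\SO(2)$, contradicting $H\supsetneq\SO(2)$. Hence $H=\SL_2(\RR)$ and $\Gamma_{\theta,\lambda}$ is irreducible.

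For freeness, write $F_2=\langle x,y\rangle$ and let $\pi_{\theta,\lambda}\colon F_2\to\Gamma_{\theta,\lambda}$ be the canonical surjection onto the two generators. Since $\Gamma_{\theta,\lambda}$ is $2$-generated, injectivity of $\pi_{\theta,\lambda}$ would identify it with $F_2$; so it suffices to show $\pi_{\theta,\lambda}$ is injective for Lebesgue-a.e.\ $(\theta,\lambda)$. A failure of injectivity yields a nontrivial $w\in F_2$ with $\pi_{\theta,\lambda}(w)=(\mathrm{Id},\mathrm{Id})$; reading off the first coordinate gives $w(R_\theta,A_\lambda)=\mathrm{Id}$, hence $\tr\!\bigl(w(R_\theta,A_\lambda)\bigr)=2$. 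The plan is therefore: for each nontrivial \emph{cyclically reduced} word $w$ (every element of $F_2$ is conjugate to one of these, which changes nothing about evaluating to the identity), prove that
\[
Z_w:=\bigl\{(\theta,\lambda)\in[0,2\pi]\times(0,\infty)\ :\ \tr\!\bigl(w(R_\theta,A_\lambda)\bigr)=2\bigr\}
\]
has measure zero; then $\bigcup_w Z_w$ is a null set containing every parameter for which $\pi_{\theta,\lambda}$ is not injective. The entries of $w(R_\theta,A_\lambda)$ are polynomials in $\cos\theta,\sin\theta,e^{\pm\lambda}$, so $f_w(\theta,\lambda):=\tr\!\bigl(w(R_\theta,A_\lambda)\bigr)-2$ is real-analytic and $Z_w$ is automatically null once $f_w\not\equiv0$; establishing this non-vanishing is the entire content.

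The one-syllable cases are trivial: for $w=x^a$ ($a\neq0$) we have $f_w=2\cos(a\theta)-2$, and for $w=y^b$ ($b\neq0$) we have $f_w=2\cosh(b\lambda)-2$, neither identically zero. The genuinely alternating case $w=x^{a_1}y^{b_1}\cdots x^{a_r}y^{b_r}$ with $r\ge1$ and all exponents nonzero is the crux, and I expect it to be the only real obstacle. Here I would send $\lambda\to+\infty$ and isolate the top-order term: writing each diagonal block as $A_{b_i\lambda}=e^{|b_i|\lambda}P_i+e^{-|b_i|\lambda}P_i'$ with $\{P_i,P_i'\}=\{E_{11},E_{22}\}$ (the assignment dictated by the sign of $b_i$), one obtains
\[
w(R_\theta,A_\lambda)=e^{m\lambda}\,R_{a_1\theta}P_1R_{a_2\theta}P_2\cdots R_{a_r\theta}P_r+(\text{strictly lower powers of }e^{\lambda}),\qquad m=\textstyle\sum_i|b_i|\ge r\ge1.
\]
The leading matrix is a product of rank-one matrices, so a one-line computation writes its trace as a product of $r$ factors, each an entry of some rotation matrix $R_{a_i\theta}$, i.e.\ of the form $\cos(a_i\theta)$ or $\pm\sin(a_i\theta)$; since each $a_i\neq0$, each factor is a nonzero trigonometric function of $\theta$, hence so is their product $c(\theta)$. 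Consequently, for all $\theta$ outside a finite set, $\tr\!\bigl(w(R_\theta,A_\lambda)\bigr)=c(\theta)e^{m\lambda}+O\!\bigl(e^{(m-1)\lambda}\bigr)$ with $c(\theta)\neq0$ and $m\ge1$, which is unbounded in $\lambda$ and in particular never the constant $2$; thus $f_w\not\equiv0$. The delicate point is exactly the non-degeneracy of $c(\theta)$ --- that no clever choice of word can force it to vanish identically --- and the rank-one reduction above is what makes this transparent. Granting it, $\bigcup_w Z_w$ is null and $\Gamma_{\theta,\lambda}$ is free for Lebesgue-almost-every $(\theta,\lambda)$.
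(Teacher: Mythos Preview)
The paper states this proposition without proof (it appears in the appendix as a background fact), so there is nothing to compare your argument against; judged on its own, your proposal is correct in both parts.

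Your irreducibility argument is clean and complete. Both coordinate projections of $\Gamma_{\theta,\lambda}$ equal $\Delta_{\theta,\lambda}=\langle R_\theta,A_\lambda\rangle$, and the two structural facts you invoke about $\SL_2(\RR)$---that $\SO(2)$ is maximal among proper connected closed subgroups (no Borel subalgebra contains the elliptic generator of $\mathfrak{so}(2)$, since that generator has no real eigenvector) and that $N_{\SL_2(\RR)}(\SO(2))=\SO(2)$ (because $\SO(2)$ is the full stabiliser of its unique fixed point $i\in\HH^2$)---are standard and correctly applied.

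Your freeness argument is also correct. The leading-term expansion is exactly right: writing $P_i=v_iv_i^T$ with $v_i\in\{e_1,e_2\}$, the trace of the rank-one product $R_{a_1\theta}P_1\cdots R_{a_r\theta}P_r$ telescopes to $\prod_{i=1}^r v_{i-1}^T R_{a_i\theta}v_i$ (indices cyclic), and each factor is a single entry $\cos(a_i\theta)$ or $\pm\sin(a_i\theta)$, none identically zero since $a_i\neq0$. Hence $c(\theta)\not\equiv0$, $f_w\not\equiv0$, and the countable union $\bigcup_w Z_w$ is null. One could in principle shortcut this by exhibiting a single pair $(\theta_0,\lambda_0)$ with $\langle R_{\theta_0},A_{\lambda_0}\rangle$ free and invoking real-analyticity, but verifying such a pair within this particular two-parameter slice still requires an argument of roughly the same shape as yours; your explicit asymptotic analysis is arguably the most transparent route.
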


\begin{question} Does for Lebesgue a.e. $\theta$, there exists $\lambda_{\theta}$, such that for every $\lambda \in [\lambda_\theta, \infty)$, the group $\Gamma_{\theta, \lambda}$ is discrete?
\end{question}

The reason why one might expect something like this to hold is that for a non-trivial word $w(x,y)$ in the free group $F_2$, it seems plausible that at least one among $w(R_{\theta}, A_{\lambda})$ or  $w( A_{\lambda}, R_{\theta})$ must be far from the identity. Nonetheless, this seems difficult to achieve because of the following:

\begin{proposition} If $\Gamma_{\theta, \lambda}$ is discrete, then the projection $\Delta_{\theta, \lambda}$ into one (both) of the $\SL_2(\R)$ factors is strongly diophantine.
\end{proposition}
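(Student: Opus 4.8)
The plan is to convert discreteness of $\Gamma_{\theta,\lambda}$ in $\SL_2(\R)\times\SL_2(\R)$ into a Diophantine lower bound on one factor by a ``word doubling'' device. Write $G=\SL_2(\R)$, $\Delta=\Delta_{\theta,\lambda}=\langle R_\theta,A_\lambda\rangle$, $S=\{R_\theta^{\pm1},A_\lambda^{\pm1}\}$, and equip $G$ with a left-invariant Riemannian metric $d$ for which $d(g,1)$ is comparable to $\|g-I\|$ near $I$ ($\|\cdot\|$ the operator norm). Since $\Gamma_{\theta,\lambda}$ is discrete and $d$ is left-invariant, there is $\epsilon_0>0$ with $d_{G\times G}(\gamma,1)\ge\epsilon_0$ for all $\gamma\in\Gamma_{\theta,\lambda}\setminus\{1\}$. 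It is enough to find $c_1,c_2>0$ such that $\|g-I\|\ge c_1e^{-c_2n}$ for every $g\in S^n\setminus\{1\}$; since $R_\theta$ and $A_\lambda$ play symmetric roles in the two coordinates, the identical bound for the second projection follows at once, so ``one'' and ``both'' coincide here.

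Given $g\in S^n\setminus\{1\}$, write $g=w(R_\theta,A_\lambda)$ for a word $w$ of length $\le n$ in two letters $X,Y$, and put $h:=w(A_\lambda,R_\theta)$. Then $(g,h)=w(R_\theta\!\times\!A_\lambda,\ A_\lambda\!\times\!R_\theta)$ belongs to $\Gamma_{\theta,\lambda}$, and the only input we have on $h$ is the crude bound $\|h^{\pm1}\|\le e^{\lambda n}$ (each letter of the word has operator norm $\le e^{\lambda}$). First I would clear the degenerate configurations: if $g\in\{\pm I\}$ then $\|g-I\|\in\{0,2\}$ and there is nothing to prove; if $h\in\{\pm I\}$ (with $g\ne\pm I$) then $(g,I)$ or $(g^2,I)$ is a nontrivial element of $\Gamma_{\theta,\lambda}$ supported in the first factor, and $\epsilon_0$-separation forces $\|g-I\|$ to be bounded below by a constant; and if $(g,h)$ has finite order then, by the Selberg bound on the orders of torsion elements of a finitely generated discrete linear group, $g$ is a nontrivial torsion element of bounded order, whence $\|g-I\|$ is bounded below.

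The main case is $[g,h]\ne1$. Here I would use the automorphism $\sigma$ of $\Gamma_{\theta,\lambda}$ induced by the coordinate flip $(a,b)\mapsto(b,a)$ of $G\times G$; it preserves $\Gamma_{\theta,\lambda}$ because it interchanges the two generators $R_\theta\!\times\!A_\lambda$ and $A_\lambda\!\times\!R_\theta$. Consider the commutator $\delta:=\big[(g,h),\sigma(g,h)\big]\in\Gamma_{\theta,\lambda}$. A direct computation gives $\delta=([g,h],[h,g])=([g,h],[g,h]^{-1})$, which is nontrivial since $[g,h]\ne I$. The decisive estimate is that $[g,h]$ is close to $I$ \emph{no matter how big $h$ is}: writing $g=I+E$ one gets $[g,h]-I=E+hFh^{-1}+EhFh^{-1}$ with $F=(I+E)^{-1}-I$, $\|F\|\le 2\|E\|$, hence $\|[g,h]-I\|\le 4\|E\|\,\|h\|\,\|h^{-1}\|\le 4\|g-I\|\,e^{2\lambda n}$, and the same bound for $[g,h]^{-1}$. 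Thus $d_{G\times G}(\delta,1)\le C\|g-I\|e^{2\lambda n}$, and comparing with $d_{G\times G}(\delta,1)\ge\epsilon_0$ yields $\|g-I\|\ge(\epsilon_0/C)e^{-2\lambda n}$, so $c_2=2\lambda$ works. Finally, in the remaining sub-case $[g,h]=1$ (with $g,h\ne\pm I$), I would replace $g$ by a conjugate $g'=q\,g\,q^{-1}$, where $q$ is the value on $(R_\theta,A_\lambda)$ of a \emph{fixed} short word $v$ from a finite list chosen so that the correspondingly conjugated pair no longer commutes; such a $v$ exists because $\langle R_\theta,A_\lambda\rangle$, being Zariski dense in $\SL_2$, is not contained in the stabilizer of the common fixed-point set of $g$ and $h$. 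Conjugating by a fixed word changes the word length only additively and $\|g-I\|$ by a bounded factor, so the exponent $c_2=2\lambda$ is unaffected and one reduces to the main case.

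The step I expect to be the main obstacle — and the whole argument is built to get around it — is controlling the \emph{second} coordinate. Discreteness of $\Gamma_{\theta,\lambda}$ only prevents nontrivial elements from being near the identity of $G\times G$, so producing a word $g=w(R_\theta,A_\lambda)$ that is super-exponentially close to $I$ gives no contradiction unless the companion $h=w(A_\lambda,R_\theta)$ is simultaneously small, which it need not be (it can have norm $e^{\lambda n}$). The natural attempts — using $(g,h)$ itself, its powers $(g^j,h^j)$, or its commutators against conjugates of itself — all enlarge the second coordinate and fail for exactly this reason, and a volume/counting estimate is likewise far too lossy because the Haar volume available to such elements is exponential in $\lambda n j$. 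The commutator $\big[(g,h),\sigma(g,h)\big]=([g,h],[g,h]^{-1})$ is precisely the device that defeats this: \emph{both} of its coordinates are commutators of $h$ against the small element $g$, hence small by the estimate above, at the single cost of the factor $\|h\|\|h^{-1}\|\le e^{2\lambda n}$ — which is what pins $c_2$ to $2\lambda$. Everything else — arranging $[g,h]\ne1$, and disposing of the $\pm I$ and finite-order cases — is routine bookkeeping.
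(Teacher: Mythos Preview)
Your argument is essentially the paper's: produce from a short word $g=w(R_\theta,A_\lambda)$ close to $I$ an element of $\Gamma_{\theta,\lambda}$ both of whose coordinates are commutators against $g$, hence of size $O(\|g-I\|\,e^{2\lambda n})$, and invoke discreteness. Your test element is $[(g,h),(h,g)]=([g,h],[g,h]^{-1})$; the paper uses instead $[w(x,y),\,x^{l}w(y,x)x^{-l}]$, i.e.\ it conjugates the swapped word by $x^{l}$ before taking the commutator, so the two coordinates are $[g,\,R_\theta^{l}hR_\theta^{-l}]$ and $[h,\,A_\lambda^{l}gA_\lambda^{-l}]$. The estimates are identical and the main case goes through the same way.

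The gap is in your treatment of the sub-case $[g,h]=1$. You propose to replace $w$ by $vwv^{-1}$, so that $g'=qgq^{-1}$ and $h'=q'hq'^{-1}$ with $q=v(R_\theta,A_\lambda)$, $q'=v(A_\lambda,R_\theta)$, and you want a fixed finite list of $v$'s from which one can always pick $[g',h']\ne1$. Since $g,h$ lie in a common one--parameter subgroup $T$, the condition $[g',h']=1$ unwinds to $q'^{-1}q\in N_G(T)$. Your justification --- Zariski density of $\langle R_\theta,A_\lambda\rangle$ --- does not meet this: what must escape $N_G(T)$ is not an arbitrary element of $\Delta$ but the \emph{twisted} element $v(A_\lambda,R_\theta)^{-1}v(R_\theta,A_\lambda)$, and you have not shown that any fixed finite family of such twisted elements avoids every $N_G(T)$ as $T$ ranges over all one--parameter subgroups. (For instance, $v=x$ and $v=y$ give inverse elements, so by themselves they lie in a single cyclic group.)

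The paper's extra conjugation by $x^{l}$ is precisely what dissolves this case. If its test element is trivial for some $l\ge1$, the first coordinate forces $R_\theta^{l}hR_\theta^{-l}\in C(g)$ and the second forces $A_\lambda^{l}gA_\lambda^{-l}\in C(h)$; running $l$ over two values shows $R_\theta$ preserves the fixed set of $h$ and $A_\lambda$ preserves that of $g$, which quickly pins $g$ into $C(R_\theta)\cap C(A_\lambda)=\{\pm I\}$, a contradiction. You can repair your proof with the same one-line change: replace $[(g,h),(h,g)]$ by $[(g,h),\,(R_\theta,A_\lambda)^{l}(h,g)(R_\theta,A_\lambda)^{-l}]$ for $l=1,2$. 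Your symmetry $\sigma$ and your commutator estimate survive untouched, and the degenerate case disappears.
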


\begin{proof}
Suppose that $\Delta_{\theta, \lambda}$ is not strongly diophantine, then if we let $C := 3\log \|A_{\lambda}\|  + 1$, there exists a divergent sequence $\{n_k\}_k>0$, and words $w_k$ of length $n_k$ in $S$ such that 

$$ 0 < d(\text{Id}, w_k(R_{\theta}, A_{\lambda})) < e^{-C n_k} $$

Consider the word $w'_k = [w_k(x,y), x^lw_k(y,x)x^{-l}]$ where $[,]$ denotes the commutator, and $l$ is a fixed integer to be determined later. We show that both projections tend to the identity as $k$ goes to infinity. 

The projection in the first factor is:
$$[w_k(R_{\theta},A_{\lambda}), R_{\theta}^{l}w_k(A_{\lambda},R_{\theta})R_{\theta}^{-l}]$$

Observe that $\|w_k(A_{\lambda},R_{\theta})\| < e^{\frac{C}{3}n_k}$, where $\|.\|$ denotes the operator norm (which is submultiplicative), and so one can check easily that:

$$ d(1, [w_k(R_{\theta},A_{\lambda}), R_{\theta}^{l}w_k(A_{\lambda},R_{\theta})R_{\theta}^{-l}] ) < M e^{-\frac{C}{3}n_k}$$ for some constant $M>0$ (independent of $k$, but possibly depending on $l$).

One shows similarly the second projection $$[w_k(A_{\lambda}, R_{\theta}),A^{l}w_k(R_{\theta},A_{\lambda})A^{-l}]$$ satisfy a similar inequality and so if $\Gamma_{\lambda, \theta}$ were discrete, then for $k$ large enough, we have that both projections are trivial, and so for $k$ large and $l = 1,2,3$, we have $w_k(R_{\theta},A_{\lambda})$ and $R_{\theta}^{l}w_k(A_{\lambda},R_{\theta})R_{\theta}^{-l}$ commute, and also $ w_k(A_{\lambda}, R_{\theta})$ and $ A^{l}w_k(R_{\theta},A_{\lambda})A^{-l}$ commute, by considering the action by these matrices in the hyperbolic plane, one can see that this only can happen if $A_{\lambda}$ and $R_{\theta}$ commute, a contradiction, and therefore $\Gamma_{\lambda, \theta}$ is not discrete.
\end{proof}

\bibliography{bibliography.bib} 
\bibliographystyle{alpha}

\end{document}